\pgfplotsset{width=13cm,compat=1.14}
\newtheorem{theorem}{Theorem}[section]
\newtheorem{lemma}[theorem]{Lemma}
\newtheorem{proposition}[theorem]{Proposition}
\newtheorem{definition}[theorem]{Definition}
\newtheorem{algorithm}{Algorithm}
\newtheorem{assumption}{Assumption}
\DeclareMathOperator*{\argmin}{argmin}
\newcommand{\inner}[2]{\langle #1,#2\rangle}
\newcommand{\norm}[1]{\|{#1}\|}
\newcommand{\R}{\mathbb{R}}
\newcommand{\tos}{\rightrightarrows}
\newcommand{\comenta}[1]{}
\newcommand{\N}{\mathbb{N}}
\newcommand{\mgap}{\vspace{.1in}}
\begin{document}

\title{Relative-error inertial-relaxed inexact versions of Douglas-Rachford and ADMM splitting algorithms}
\author{
   M. Marques Alves
\thanks{
Departamento de Matem\'atica,
Universidade Federal de Santa Catarina,
Florian\'opolis, Brazil, 88040-900 ({\tt maicon.alves@ufsc.br}).
The work of this author was partially supported by CNPq grants no.
405214/2016-2 and 304692/2017-4.}
\and
Jonathan Eckstein
\thanks{Department of Managment Science and Information Systems and RUTCOR,
        Rutgers Business School Newark and New Brunswick,
        Piscataway, NJ 08854, USA ({\tt jeckstei@business.rutgers.edu}).  The work
        of this author was partially supported by National Science Foundation grant 
        CCF-161761 and Air Force Office of Scientific Research grant
        FA9550-15-1-0251.}
\and
  Marina Geremia
\thanks{
Departamento de Matem\'atica,
Universidade Federal de Santa Catarina,
Florian\'opolis, Brazil, 88040-900 ({\tt marinageremia@yahoo.com.br}).}
\and
Jefferson G. Melo 
\thanks{
IME, Universidade Federal de Goi\'as, Goi\^ania, Brazil, 74001-970 ({\tt jefferson.ufg@gmail.com}).}
}
\date{May 9, 2014}
\date{}


\maketitle

\begin{abstract}
This paper derives new inexact variants of the Douglas-Rachford splitting
method for maximal monotone operators and the alternating direction method of
multipliers (ADMM) for convex optimization.  The analysis is based on a new
inexact version of the proximal point algorithm that includes both an inertial
step and overrelaxation.  We apply our new inexact ADMM method to LASSO
and logistic regression problems and obtain somewhat better computational
performance than earlier inexact ADMM methods.
\\
\\
  2000 Mathematics Subject Classification: 90C25, 90C30, 47H05.
 \\
 \\
  Key words: Inertial, proximal point algorithm, operator splitting, ADMM, relative error criterion, relaxation.
\end{abstract}

\pagestyle{plain}


\section{Introduction}
\label{sec:intro}
This paper develops a sequence of three algorithms, each building on the
previous one.  The first algorithm is a new variant of the proximal point
algorithm~\cite{roc-mon.sjco76} for the general, abstract problem $0\in T(z)$,
where $T$ is a set-valued maximal monotone operator on $\R^n$ for which
$T^{-1}(0)\neq \emptyset$. Our proposed method is a new inertial variant of
the relaxed hybrid proximal projection (HPP) method introduced
in~\cite{Sol-Sv:hy.unif}; see also~\cite{sol.sva-hyb.jca99}. It lacks the full
generality of~\cite{Sol-Sv:hy.unif}, but introduces a new ``inertial'' step
modification.

Using this first algorithm, we then develop a new inexact variant of the
Douglas-Rachford (DR) splitting method for monotone inclusion problems of the of
form $0 \in A(x) + B(x)$, where $A,B: \R^n \tos \R^n$ are set-valued maximal
monotone operators.

Finally, based on this latter method, we derive a new inexact variant of the
alternating direction method of multipliers (ADMM) algorithm for solving
convex optimization problems of the form $\min_{x\in\R^n}\{f(x) + g(x)\}$,
where $f,g: \R^n \to \R
\cup \{+\infty\}$ are closed proper convex functions.  Using the well known
LASSO and logistic regression problems as examples,
we perform some computational tests on this last algorithm 
in Section~\ref{sec:exp} below, finding somewhat better practical
performance than earlier proposed inexact ADMM methods
from~\cite{eck.yao-app.coap17,eck.yao-rel.mp17}.

This path for developing approximate DR and ADMM methods was pioneered
in~\cite{eck.ber-dou.mp92}, and is also taken in the more recent paper
by Eckstein and Yao~\cite{eck.yao-rel.mp17}: in each case,
one takes an approximate form of proximal point algorithm
(PPA)~\cite{roc-mon.sjco76} and uses it to obtain an approximate form of DR
splitting, which can then be used to obtain a new ``primDR''
variant of the ADMM; the iteration complexity of the ``primDR'' ADMM was later
studied in \cite{alv.ger-dr.numa18}.  The main difference between this paper
and the development of ``primDR'' in~\cite{eck.yao-rel.mp17} is in the
underlying variant of the PPA.  The ``primDR'' analysis used the hybrid
proximal extragradient (HPE) method~\cite{sol.sva-hyb.svva99} due to Solodov
and Svaiter, whereas here we instead use the new inexact HPP developed in
Section~\ref{sec:alg}.

Our general approach resembles that
of~\cite{eck.yao-rel.mp17} in that it uses a primal derivation and the
``coupling matrix'' between $f$ and $g$ in the optimization
formulation must be the identity,
whereas~\cite{eck.ber-dou.mp92}, drawing on early work 
in~\cite{gabay1983applications}, uses a dual derivation and allows for more
general coupling matrices.  Our analysis is also much closer
to~\cite{eck.yao-rel.mp17} than that of~\cite{eck.yao-app.coap17}, which uses
a primal-dual ``Lagrangian splitting'' analysis patterned
after~\cite{fortin1983augmented}. 


Inertial algorithms for convex optimization and monotone
inclusions~\cite{alv.att-iner.svva01} have been a subject of
intense research in recent years. They appear in connection 
with continuous dynamics --- see,
\emph{e.g.}~\cite{alv.att-iner.svva01,
att.chb.pey-fas.mp18,att.pey.red-fas.jde16}
--- accelerated first- and second-order algorithms, and operator splitting
    methods --- see \emph{e.g.}~\cite{att.cab-con.pre218,att.cab-con.pre18,
    che.cha.ma-ine.sjis15,che.ma.yan-gen.sijo15,lor.poc-ine.jmiv15} --- with
    good theoretical and practical performance
    improvements over prior methods. The inertial methods we
    propose here have the novel property of simultaneously combining inexact
    iterations, inertia, and relaxation, with
    the maximum inertial step $\alpha$ and maximum relaxation factor
    $\bar\rho$ being subject to a mutual constraint; see~\eqref{eq:alpha}
    and~\eqref{eq:betatau} below.  However, the inertial and relaxation
    parameters may be chosen independently of the relative-error tolerances.


The remainder of this paper is organized as follows:
Section~\ref{sec:alg} presents our inertial-relaxed HPP method (Algorithm~\ref{inertial.hpp}) and its convergence analysis (Theorems~\ref{th:main} and~\ref{th:wc}).
Section~\ref{sec:dr} then uses the HPP method to develop an
inexact inertial-relaxed DR method (Algorithm~\ref{alg:ine_dr}), for which
convergence is established in
Theorem~\ref{th:conv_dr}.
Section~\ref{sec:admm_iner} then uses inertial-relaxed DR
method to derive a partially inexact relative-error ADMM method
(Algorithm~\ref{admm_inertial}). The main result of this section is
Theorem~\ref{th:conv_admm}.
Section \ref{sec:exp} presents numerical experiments on LASSO and logistic
regression problems.


\section{An inertial-relaxed hybrid proximal projection \\ (HPP) method}
 \label{sec:alg}
We begin by developing a new method for the problem
\begin{align}
  \label{eq:mip}
 0\in T(z),
\end{align}
where $T:\R^n\tos\R^n$ is a maximal monotone operator; we assume that this
problem has a solution.
Our new proposed procedure for this problem, related to the method
of~\cite{Sol-Sv:hy.unif} but having a new ``inertial'' step feature, is given
below as Algorithm~\ref{inertial.hpp}.
%

\begin{algfloat}
\noindent
\fbox{
\addtolength{\linewidth}{-2\fboxsep}%
\addtolength{\linewidth}{-2\fboxrule}%
\begin{minipage}{0.98\linewidth}
\begin{algorithm}
\label{inertial.hpp}
{\bf A relative-error inertial-relaxed HPP method for solving \bf{(\ref{eq:mip})}}
\end{algorithm}
\begin{itemize}
\item[] {\bf Initialization:} Choose  $z^0=z^{-1}\in \R^n$ and $0\leq \alpha, \sigma< 1$ and $0<\underline{\rho}<\overline{\rho}<2$
\item [ ] {\bf for} $k=0,1,\dots$  {\bf do}
\item [ ] Choose $\alpha_{k}\in [0,\alpha]$ and define
  \begin{align}
      \label{eq:ext.hpe}
     w^{k} = z^{k}+\alpha_{k}(z^{k}-z^{k-1})
 \end{align}
\item [ ] Find $(\tilde z^k,v^k)\in \R^n\times \R^n$ and $\lambda_k>0$ such that
\begin{align}
\label{eq:err.hpe}
 v^k\in T(\tilde z^k),\quad \norm{\lambda_k v^k+\tilde z^k-w^{k}}^2 \leq \sigma^2 \left(\norm{\tilde z^k-w^{k}}^2+\norm{\lambda_k v^k}^2\right)
\end{align}
\item[ ] If $v^k=0$, then {\bf stop}. Otherwise, choose $\rho_k\in [\,\underline{\rho},\overline{\rho}\,]$ and set
 \begin{align}
  \label{eq:err.hpe2}
   z^{k+1}=w^{k}-\rho_k \dfrac{\inner{w^k-\tilde z^k}{v^k}}{\norm{v^k}^2}v^k
 \end{align}
\item[ ] {\bf end for}  
\end{itemize}
\noindent
\end{minipage}
} 
\end{algfloat}

We make the following remarks concerning this algorithm:
\begin{itemize}
\item[(i)]  The extrapolation step in~\eqref{eq:ext.hpe} introduces inertial
effects --- see \emph{e.g.}~\cite{alv-wea.siam03,alv.att-iner.svva01} ---
controlled by the parameter $\alpha_k$.  The effect of the overrelaxation
parameter $\rho_k$ in~\eqref{eq:err.hpe2} is similar but not identical, as
shown in Figure~\ref{fig:arrows} below. Conditions on $\{\alpha_k\}$,
$\alpha\in [0,1)$ and $\overline{\rho}\in (0,2)$ that guarantee the
convergence of Algorithm \ref{inertial.hpp} are given in Theorem \ref{th:wc}
--- see \eqref{eq:alpha} and \eqref{eq:betatau} and Figure \ref{fig02} below.
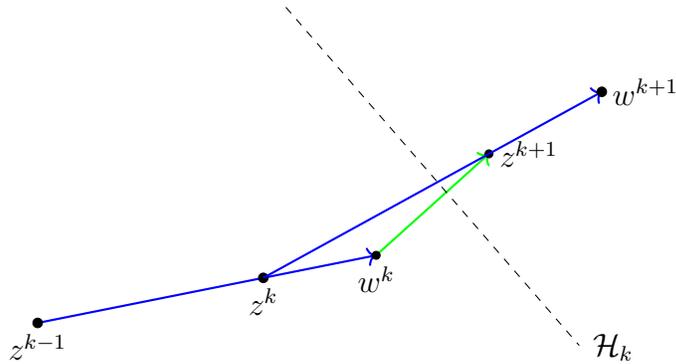
\begin{figure}[!htb]
\centering
\vspace{0.5cm}
\begin{tikzpicture}[scale=3]
\draw[thick,->,blue] (0, 0) -- (1.5, 0.3);
\draw[thick,->,green] (1.5, 0.3) -- (2, 0.75);
\draw[dashed] (1.1,1.4) -- (2.40,-0.1);
\filldraw (0,0) circle (0.6pt) node[align=center,   below] {$z^{k-1}$};
\filldraw (1,0.2) circle (0.6pt) node[align=center, below] {$z^{k}$};
\filldraw (1.5,0.3) circle (0.5pt) node[align=center, below] {$w^{k}$};
\filldraw (2,0.75) circle (0.5pt) node[align=center, right] {$z^{k+1}$};
\draw (2.55,0) circle (0.0pt) node[align=center, below] {$\mathcal{H}_k$};
\draw[thick,->,blue] (1,0.2) -- (2.5,1.025);
\filldraw (2.5,1.025) circle (0.6pt) node[align=center, right] {$w^{k+1}$};
\end{tikzpicture}
\vspace{-0.5ex}
\caption{Geometric interpretation of steps~\eqref{eq:ext.hpe}
and~\eqref{eq:err.hpe2} in Algorithm~\ref{inertial.hpp}.  The overrelaxed
projection step~\eqref{eq:err.hpe2} is orthogonal to the separating hyperplane
$\mathcal{H}_k$, which can differ from the direction between $z^{k-1}$, $z^k$,
and $w^k$ when $\alpha_k > 0$.}\label{fig:arrows}
\end{figure}
\item[(ii)] If $\alpha=0$, in which case $\alpha_k\equiv 0$,
Algorithm~\ref{inertial.hpp} reduces to a special case of the HPP method
of~\cite{Sol-Sv:hy.unif}; see also \cite{sol.sva-hyb.jca99}.
Algorithm~\ref{inertial.hpp} is also closely related to the inertial version
of the HPP method presented in~\cite{alv-wea.siam03}, although that method
uses a different relative error criterion.
\item[(iii)] At each iteration $k$, 
condition~\eqref{eq:err.hpe} is a relative error criterion for
the inexact solution of the proximal subproblem
$\tilde{z}^k = (I + \lambda_k T)^{-1}(w^k) := J_{\lambda_k T}(w^k)$. If
$\sigma=0$, then this equation must be solved exactly and the pair $(\tilde
z^k, v^k)$ may be written $(\tilde z^k, v^k)=(J_{\lambda_k T}(w^k),
\lambda_k^{-1}(w^k-\tilde z^k))$.
Here, we  are primarily concerned with situations in which the calculation
of $J_{\lambda_k T}(w^k)$ is relatively difficult and must be approached with
an iterative algorithm.  In such cases, we use the
condition~\eqref{eq:err.hpe} as an acceptance criterion to truncate such an
iterative calculation, possibly saving computational effort.  We do not
specify the exact form of the iterative algorithm used to produce a pair
$(\tilde{z}^k,v^k)$ satisfying~\eqref{eq:err.hpe}, as it depends on the class
of problems to which the algorithm is being applied (and thus the structure of
the operator $T$).  See~\cite{sol.sva-hyb.jca99,Sol-Sv:hy.unif} for a related
discussion; an abstract formalism of the class of algorithm needed to find a
solution to~\eqref{eq:err.hpe} is the ``$\mathcal{B}$-procedure'' described
in~\cite{eck.yao-rel.mp17} and also used in Section~\ref{sec:dr} below.

%
\item[(iv)] The point $z^{k+1}$ in~\eqref{eq:err.hpe2} may be viewed 
as $z^{k+1}=w^k+\rho_k(P_{\mathcal{H}_k}(w^k)-w^k)$, where $P_{\mathcal{H}_k}$
denotes orthogonal projection onto the hyperplane 
\begin{equation}\label{hyperplane}
\mathcal{H}_k:=\{z\in \R^n\;|\;\inner{z}{v^k}=\inner{\tilde z^k}{v^k}\},
\end{equation}
which strictly separates
$w^k$ from the solution set $T^{-1}(0)$ of \eqref{eq:mip}.  This kind of
projective approach to approximate proximal point algorithms was pioneered
in~\cite{sol.sva-hyb.jca99}.

\item[(v)]  Algorithm~\ref{inertial.hpp} is an inexact variant of the
proximal point algorithm (PPA)~\cite{roc-mon.sjco76}.  In particular, each of
its iterations performs an approximate resolvent calculation subject a
relative error criterion, and then executes a projection operation in the
manner introduced in~\cite{sol.sva-hyb.jca99};
see~\cite{sol.sva-hyb.svva99,Sol-Sv:hy.unif} for related work.  The main
difference from~\cite{sol.sva-hyb.jca99} is the inertial
step~\eqref{eq:ext.hpe}. 
\end{itemize}

If $v^k=0$ in Algorithm \ref{inertial.hpp}, then it follows from the inclusion
in~\eqref{eq:err.hpe}  that $\tilde z^k$ is a solution of~\eqref{eq:mip},
that is, $0\in T(\tilde z^k)$, so we halt immediately with the
solution $\tilde{z}^k$.  For the remainder of this section, we assume that
$v^k \not\equiv 0$ and hence that Algorithm~\ref{inertial.hpp} generates an
infinite sequence of iterates.
%
%
The following well-known identity will be useful in the analysis of
Algorithm~\ref{inertial.hpp}:
\begin{align}
  \label{eq:hard}
 \norm{(1-\rho)p+\rho
 q}^2=(1-\rho)\norm{p}^2+\rho\norm{q}^2-\rho(1-\rho)\norm{p-q}^2\quad \forall
 p,q\in \R^n\quad \forall \rho\in \R.
\end{align}

\begin{lemma}\emph{\cite[Lemma 2]{Sol-Sv:hy.unif}}
\label{lm:gauss}
For each $k \geq 0$, condition~\eqref{eq:err.hpe} implies that
\begin{align}
 \label{eq:gauss}
  \dfrac{1-\sigma^2}{1+\sqrt{1-(1-\sigma^2)^2}}\norm{\tilde z^k - w^k}\leq \norm{\lambda_k  v^k}\leq \dfrac{1-\sigma^2}{1-\sqrt{1-(1-\sigma^2)^2}}\norm{\tilde z^k - w^k}.
\end{align}
\end{lemma}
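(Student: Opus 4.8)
The plan is to treat~\eqref{eq:err.hpe} as a purely algebraic relation between the two scalars $\norm{\lambda_k v^k}$ and $\norm{\tilde z^k - w^k}$, exploiting only the inner-product structure of $\R^n$; the inclusion $v^k\in T(\tilde z^k)$ plays no role in this particular estimate. First I would abbreviate $a:=\norm{\lambda_k v^k}$ and $b:=\norm{\tilde z^k-w^k}$ and expand
\[
  \norm{\lambda_k v^k+\tilde z^k-w^k}^2
    = a^2 + 2\inner{\lambda_k v^k}{\tilde z^k-w^k} + b^2 .
\]
Substituting into~\eqref{eq:err.hpe} and writing $c:=1-\sigma^2>0$ (recall $\sigma<1$), the error condition rearranges to
\[
  2\inner{\lambda_k v^k}{\tilde z^k-w^k} \leq -\,c\,(a^2+b^2).
\]

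Next I would apply the Cauchy--Schwarz inequality in the form $\inner{\lambda_k v^k}{\tilde z^k-w^k}\geq -ab$, which combined with the previous display gives $-2ab\leq -c(a^2+b^2)$, that is,
\[
  c\,a^2 - 2\,ab + c\,b^2 \leq 0 .
\]
The crux of the argument is to read this as a quadratic inequality in $a$ with $b$ held fixed. Its discriminant equals $4b^2-4c^2b^2=4b^2(1-c^2)\geq 0$ since $0<c\leq 1$, so the inequality holds precisely when $a$ lies between the two roots $\tfrac{b}{c}\bigl(1\pm\sqrt{1-c^2}\bigr)$, yielding
\[
  \frac{1-\sqrt{1-c^2}}{c}\,b \;\leq\; a \;\leq\; \frac{1+\sqrt{1-c^2}}{c}\,b .
\]

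Finally I would rationalize each coefficient using the identity $\bigl(1\mp\sqrt{1-c^2}\bigr)\bigl(1\pm\sqrt{1-c^2}\bigr)=1-(1-c^2)=c^2$: the lower coefficient $\tfrac{1-\sqrt{1-c^2}}{c}$ becomes $\tfrac{c}{1+\sqrt{1-c^2}}$ and the upper coefficient $\tfrac{1+\sqrt{1-c^2}}{c}$ becomes $\tfrac{c}{1-\sqrt{1-c^2}}$, which after substituting back $c=1-\sigma^2$ is exactly~\eqref{eq:gauss}. I do not anticipate any serious obstacle, as the argument is entirely elementary; the only point needing a moment's care is the degenerate case $b=0$, in which~\eqref{eq:err.hpe} forces $a^2\leq\sigma^2 a^2$ and hence $a=0$, so both stated inequalities hold trivially and the division by $b$ in the quadratic step is simply avoided.
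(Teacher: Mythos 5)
Your proof is correct. Note that the paper itself offers no proof of this lemma --- it is quoted directly from Lemma~2 of the Solodov--Svaiter reference \cite{Sol-Sv:hy.unif} --- so your argument fills in details the paper leaves to the citation, and it does so along essentially the standard lines: expand the square in~\eqref{eq:err.hpe}, apply Cauchy--Schwarz to obtain the symmetric quadratic inequality $c\,a^2-2ab+c\,b^2\leq 0$ with $c=1-\sigma^2$, and locate $a$ between the two roots. Your observation that the inclusion $v^k\in T(\tilde z^k)$ is irrelevant to the estimate is accurate, the discriminant and sign checks ($c\in(0,1]$, so $1-\sqrt{1-c^2}>0$) are all in order, and the handling of the degenerate case $\tilde z^k=w^k$ is careful, even though the quadratic argument in fact degrades gracefully there since both roots collapse to zero.
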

\noindent An immediate implication of Lemma~\ref{lm:gauss} is that $v^k=0$ if 
and only if $\tilde z^k=w^k$.


The proof of the following proposition can be found, using different notation, in \cite{Sol-Sv:hy.unif}. For the convenience of the
reader, we also present it here.

\begin{proposition}
\label{inq:err2}
Let $\{z^k\}$, $\{\tilde z^k\}$ and $\{w^k\}$ be generated 
by \emph{Algorithm \ref{inertial.hpp}}
and define, for all $k\geq 1$,
\begin{align}
  \label{eq:born}
 s_{k} = (2- \overline\rho)\max\left\{\overline{\rho}^{-1} \norm{z^{k}-w^{k-1}}^2,\;\; \underline{\rho}\, (1-\sigma^2)^{\,2}
\norm{\tilde z^{k-1}-w^{k-1}}^2\right\}.
\end{align}
Then, for any  $z^*\in T^{-1}(0)$,
\begin{align}
 \label{eq:103}
 \norm{z^{k+1}-z^*}^2 + s_{k+1} \leq \norm{w^k-z^*}^2, \qquad \forall k\geq 0.
%
%
\end{align}
\end{proposition}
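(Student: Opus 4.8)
The plan is to exploit the projective geometry noted in remark~(iv): write $z^{k+1}$ as the $\rho_k$-relaxation of $w^k$ toward its orthogonal projection onto the separating hyperplane $\mathcal{H}_k$, and track how the distance to a fixed solution $z^*\in T^{-1}(0)$ contracts under this relaxed reflection. Concretely, I would set $\gamma_k=\inner{w^k-\tilde z^k}{v^k}/\norm{v^k}^2$, so that $P_{\mathcal{H}_k}(w^k)=w^k-\gamma_k v^k$ and $z^{k+1}=(1-\rho_k)w^k+\rho_k\,P_{\mathcal{H}_k}(w^k)$. Applying the identity~\eqref{eq:hard} with $p=w^k-z^*$, $q=P_{\mathcal{H}_k}(w^k)-z^*$, and $\rho=\rho_k$ — noting $\norm{p-q}^2=\gamma_k^2\norm{v^k}^2$ — and expanding $\norm{q}^2$, the only term whose sign is not automatically favorable is the cross term $\inner{v^k}{\tilde z^k-z^*}$. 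Here I would invoke monotonicity of $T$: since $v^k\in T(\tilde z^k)$ and $0\in T(z^*)$, one has $\inner{v^k}{\tilde z^k-z^*}\geq 0$, which lets me discard that term with the correct sign and arrive at the core contraction $\norm{z^{k+1}-z^*}^2\leq \norm{w^k-z^*}^2-\rho_k(2-\rho_k)\gamma_k^2\norm{v^k}^2$.

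The remaining task is to show that the decrease term $\rho_k(2-\rho_k)\gamma_k^2\norm{v^k}^2$ dominates $s_{k+1}$, and I would do this by bounding it below by each of the two quantities inside the maximum defining $s_{k+1}$. For the first branch, I would use $z^{k+1}-w^k=-\rho_k\gamma_k v^k$ to rewrite the decrease as $\tfrac{2-\rho_k}{\rho_k}\norm{z^{k+1}-w^k}^2$ and observe that $\rho\mapsto(2-\rho)/\rho$ is decreasing, so this is at least $(2-\overline{\rho})\,\overline{\rho}^{-1}\norm{z^{k+1}-w^k}^2$. For the second branch, I would expand the relative-error condition~\eqref{eq:err.hpe} to extract $\inner{v^k}{w^k-\tilde z^k}\geq \tfrac{1-\sigma^2}{2\lambda_k}\bigl(\norm{\tilde z^k-w^k}^2+\norm{\lambda_k v^k}^2\bigr)$, then apply AM-GM to the right-hand side, which cancels the $\lambda_k$-dependence and yields $\inner{v^k}{w^k-\tilde z^k}\geq(1-\sigma^2)\norm{\tilde z^k-w^k}\norm{v^k}$, hence $\gamma_k^2\norm{v^k}^2\geq(1-\sigma^2)^2\norm{\tilde z^k-w^k}^2$. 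Combined with the elementary bound $\rho_k(2-\rho_k)\geq\underline{\rho}\,(2-\overline{\rho})$ valid on $[\underline{\rho},\overline{\rho}]$, this gives the second desired lower bound. Since the decrease term dominates both branches it dominates their maximum, i.e.\ $s_{k+1}$, and substituting back into the core contraction yields the claim.

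The hard part will be the second branch, which hinges on two small but essential observations. First, the concave map $\rho\mapsto\rho(2-\rho)$ attains its minimum over the interval $[\underline{\rho},\overline{\rho}]$ at an endpoint, and since $\underline{\rho}\leq\overline{\rho}$ both endpoint values exceed $\underline{\rho}(2-\overline{\rho})$; this is exactly the constant appearing in~\eqref{eq:born}. Second, the AM-GM passage from the relative-error estimate to the clean factor $(1-\sigma^2)^2$ is what makes the final constant line up with the definition of $s_k$ — any weaker inequality (for instance using only Lemma~\ref{lm:gauss}) would produce a $\lambda_k$-dependent constant and break the telescoping structure that~\eqref{eq:103} is designed to support. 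Everything else is routine algebra, so I would keep the write-up focused on these two points and the sign argument from monotonicity.
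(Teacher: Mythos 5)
Your proposal is correct and takes essentially the same route as the paper's own proof: the same application of identity \eqref{eq:hard} with $p=w^k-z^*$ and $q=P_{\mathcal{H}_k}(w^k)-z^*$, the same AM--GM consequence of \eqref{eq:err.hpe} (the paper's \eqref{eq:fermi}), and the same two-branch bounding of the decrease term that produces the constants $(2-\overline{\rho})\,\overline{\rho}^{-1}$ and $(2-\overline{\rho})\,\underline{\rho}\,(1-\sigma^2)^2$ appearing in \eqref{eq:born}. The only difference is organizational: the paper reaches your ``core contraction'' (its \eqref{eq:villani}) by invoking firm nonexpansiveness of the projection onto the separating halfspace (its \eqref{eq:riccati}), whereas you expand directly and use monotonicity to discard the cross term $\inner{v^k}{\tilde z^k-z^*}$ --- just note in the write-up that this term enters multiplied by $\rho_k\gamma_k$, so discarding it also requires $\gamma_k>0$, which is exactly what your AM--GM step supplies.
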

\begin{proof}
We start by defining $\widehat z^{\,k+1}$ as the orthogonal projection of
$w^k$ onto the hyperplane $\mathcal{H}:=\{z\in
\R^n\,|\,\inner{z}{v^k}=\inner{\tilde z^k}{v^k}\}$, \emph{i.e.},
\begin{align}
  \label{eq:wolfe}
  \widehat z^{\,k+1}:=w^k-\dfrac{\inner{w^k-\tilde z^k}{v^k}}{\norm{v^k}^2}v^k.
\end{align}
Next we show that the hyperplane $\mathcal{H}$ stricly separates the current
point $w^k$ from the solution set $\Omega:=T^{-1}(0)\neq \emptyset$, that is,
\begin{align}
 \label{eq:frank}
 \inner{w^k}{v^k}>\inner{\tilde z^k}{v^k}\geq \inner{z^*}{v^k}\quad \forall z^*\in \Omega.
\end{align}
To this end, $0\in T(z^*)$, $v^k\in T(\tilde
z^k)$ and the monotonicity of $T$ yield $\inner{\tilde z^k-z^*}{v^k}\geq 0$,
which is equivalent to the second inequality in \eqref{eq:frank}.
On the other hand, note that from \eqref{eq:err.hpe} and the Young inequality 
$2ab\leq a^2+b^2$ we have
\begin{align*}
 \inner{w^k-\tilde z^k}{v^k}&\geq \dfrac{1-\sigma^2}{2 \lambda_k}\left(\norm{\tilde z^k-w^k}^2+\norm{\lambda_k v^k}^2\right)
     \geq (1-\sigma^2)\norm{w^k-\tilde z^k}\norm{v^k},
\end{align*}
which in turn yields
\begin{align}
 \label{eq:fermi}
   \dfrac{\inner{w^k-\tilde z^k}{v^k}}{\norm{v^k}}
   \geq (1-\sigma^2)\norm{w^k-\tilde z^k}>0.
\end{align}
 One consequence of
\eqref{eq:fermi} is the first inequality in \eqref{eq:frank},
so~\eqref{eq:frank} must hold.

From~\eqref{eq:wolfe} and~\eqref{eq:frank}, we may infer that
$\widehat{z}^{k+1}$ is the projection $w^k$ onto the halfspace $\{z\in
\R^n\;|\;\inner{z}{v^k}\leq\inner{\tilde z^k}{v^k}\}$, which is a convex set
containing $z^*$.  The well-known firm nonexpansivess properties of the
projection operation then imply that
\begin{align}
 \label{eq:riccati}
 \norm{w^k-z^*}^2-\norm{\widehat z^{\,k+1}-z^*}^2&\geq 
\norm{w^k-\widehat z^{\,k+1}}^2.
\end{align}
Algebraic manipulation of \eqref{eq:err.hpe2} and \eqref{eq:wolfe}
yields $z^{\,k+1} - z^*=(1-\rho_k)(w^k - z^*)+\rho_k (\widehat
z^{\,k+1}-z^*)$.  Combining this equation with \eqref{eq:hard} with
$(p,q)=(w^k-z^*,\widehat z^{\,k+1}-z^*)$ gives
\begin{align*}
 \norm{z^{k+1} - z^*}^2&=(1-\rho_k)\norm{w^k-z^*}^2+\rho_k\norm{\widehat z^{\,k+1}-z^*}^2-\rho_k(1-\rho_k)\norm{w^k-\widehat z^{\,k+1}}^2,
\end{align*}
%
which after some rearrangement yields
%
\begin{align*}
\norm{w^k-z^*}^2-\norm{z^{k+1}-z^*}^2 
  &= \rho_k\left(\norm{w^k-z^*}^2-\norm{\widehat z^{\,k+1}-z^*}^2\right)
     + \rho_k(1-\rho_k)\norm{w^k-\widehat z^{\,k+1}}^2.
\end{align*}
Using~\eqref{eq:riccati} in the first term on the right-hand side of this
identity produces
%
\begin{align}
\norm{w^k-z^*}^2-\norm{z^{k+1}-z^*}^2
   &\geq \rho_k \norm{w^k-\widehat z^{\,k+1}}^2
      + \rho_k(1-\rho_k)\norm{w^k-\widehat z^{\,k+1}}^2 \nonumber \\
   &= (\rho_k+\rho_k(1-\rho_k))\norm{w^k-\widehat z^{\,k+1}}^2 \nonumber \\
 \label{eq:villani}
 &=\rho_k(2-\rho_k)\left(\dfrac{\inner{w^k-\tilde z^k}{v^k}}{\norm{v^k}}\right)^2
   \quad\quad\;\;\, \text{[by~\eqref{eq:wolfe}]}\\
 \label{eq:kant}
&\geq \rho_k(2-\rho_k)(1-\sigma^2)^2\norm{w^k-\tilde z^k}^2.
\quad\quad\text{[by~\eqref{eq:fermi}]}
\end{align}
To finish the proof, we observe that \eqref{eq:villani} and 
\eqref{eq:err.hpe2} yield
\begin{align*}
 \norm{w^k-z^*}^2-\norm{z^{k+1}-z^*}^2&\geq \rho_k^{-1}(2-\rho_k)\norm{z^{k+1}-w^k}^2.
\end{align*}
Combining this inequality with \eqref{eq:kant}, \eqref{eq:born} and the bounds
$\rho_k\in [\,\underline \rho, \overline \rho\,]$ results in \eqref{eq:103}.
\end{proof}

The inequality~\eqref{eq:ineq.tech} presented in the following proposition
plays a role in the convergence analysis of inertial proximal algorithms ---
see \emph{e.g.}~\cite{alv.att-iner.svva01} --- similar to that played by
Fej\'er monotonicity in the analysis of standard proximal algorithms.

\begin{proposition}
 \label{lm:tech}
 Let $\{z^k\}$, $\{w^k\}$ and $\{\alpha_k\}$ be generated by \emph{Algorithm
\ref{inertial.hpp}} and let $\{s_k\}$ be as in \eqref{eq:born}. Further let
$z^*\in T^{-1}(0)$ and define
 \begin{align}
 \label{eq:def.phi}
  (\forall k\geq -1)\quad \varphi_k:=\norm{z^k-z^*}^2\;\;\mbox{and}\;\;\;
 (\forall k\geq 0)\quad
\delta_k:=\alpha_{k}(1+\alpha_{k})\norm{z^{k}-z^{k-1}}^2.
\end{align}
Then, $\varphi_0=\varphi_{-1}$ and
 \begin{align}
  \label{eq:ineq.tech}
 \varphi_{k+1}-\varphi_{k}+s_{k+1} \leq
   \alpha_{k}(\varphi_{k}-\varphi_{k-1})+\delta_{k}\qquad \forall k\geq 0,
\end{align}
that is, the sequences $\{\varphi_k\}$, $\{s_k\}$, $\{\alpha_k\}$ and
$\{\delta_k\}$ satisfy the assumptions of Lemma \ref{lm:alv.att} below.
\end{proposition}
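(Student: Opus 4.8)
The plan is to combine the descent-type estimate from Proposition~\ref{inq:err2} with a purely algebraic expansion of $\norm{w^k-z^*}^2$ in terms of the quantities $\varphi_k$, $\varphi_{k-1}$ and $\delta_k$. The first claim, $\varphi_0=\varphi_{-1}$, is immediate from the initialization $z^0=z^{-1}$ in Algorithm~\ref{inertial.hpp}, since then $\norm{z^0-z^*}^2=\norm{z^{-1}-z^*}^2$.

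For the main inequality~\eqref{eq:ineq.tech}, I would start from the inertial extrapolation~\eqref{eq:ext.hpe}, which gives $w^k-z^*=(z^k-z^*)+\alpha_k(z^k-z^{k-1})$. Expanding the squared norm produces a cross term $2\alpha_k\inner{z^k-z^*}{z^k-z^{k-1}}$, which I would rewrite using the polarization identity $2\inner{a}{b}=\norm{a}^2+\norm{b}^2-\norm{a-b}^2$ with $a=z^k-z^*$ and $b=z^k-z^{k-1}$. Since $a-b=z^{k-1}-z^*$, the cross term becomes $\alpha_k\bigl(\varphi_k+\norm{z^k-z^{k-1}}^2-\varphi_{k-1}\bigr)$.

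Collecting terms, the contributions to $\norm{z^k-z^{k-1}}^2$ from the cross term (coefficient $\alpha_k$) and from the pure square term (coefficient $\alpha_k^2$) combine to $\alpha_k(1+\alpha_k)$, which is exactly the coefficient in the definition of $\delta_k$. This yields the \emph{exact} identity $\norm{w^k-z^*}^2=\varphi_k+\alpha_k(\varphi_k-\varphi_{k-1})+\delta_k$. Substituting this into the bound $\varphi_{k+1}+s_{k+1}\leq\norm{w^k-z^*}^2$ supplied by~\eqref{eq:103} and rearranging produces~\eqref{eq:ineq.tech}.

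The computation is entirely routine and there is no genuine obstacle beyond careful bookkeeping. The only point requiring attention is verifying that the two distinct sources of $\norm{z^k-z^{k-1}}^2$ assemble precisely into $\delta_k$, so that the step relating $\norm{w^k-z^*}^2$ to the right-hand side loses no slack; all of the inequality in~\eqref{eq:ineq.tech} then comes from Proposition~\ref{inq:err2} alone.
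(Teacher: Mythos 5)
Your proposal is correct and follows essentially the same route as the paper: both derive the exact identity $\norm{w^k-z^*}^2=\varphi_k+\alpha_k(\varphi_k-\varphi_{k-1})+\delta_k$ from the extrapolation step \eqref{eq:ext.hpe} and then substitute it into the bound \eqref{eq:103} of Proposition~\ref{inq:err2}. The only cosmetic difference is that the paper obtains this identity by rewriting $z^k-z^*$ as a combination of $w^k-z^*$ and $z^{k-1}-z^*$ and invoking the prepared identity \eqref{eq:hard}, whereas you expand directly and handle the cross term by polarization --- the same algebra, inlined.
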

\begin{proof}
From~\eqref{eq:ext.hpe} we obtain $z^{k}-z^*=(1+\alpha_{k})^{-1}(w^{k}-z^*)+
 \alpha_{k}(1+\alpha_{k})^{-1}(z^{k-1}-z^*)$,
which in conjunction with~\eqref{eq:hard} and some algebraic manipulation yields
\begin{align*}
 \norm{w^{k}-z^*}^2=(1+\alpha_{k})\norm{z^{k}-z^*}^2-\alpha_{k}\norm{z^{k-1}-z^*}^2
+\alpha_{k}(1+\alpha_{k})\norm{z^{k}-z^{k-1}}^2.
\end{align*}
Using the above identity and \eqref{eq:def.phi} we
obtain, for all $k\geq 0$, that
\begin{align*}
 \norm{w^{k}-z^*}^2=(1+\alpha_{k})\varphi_{k}-\alpha_{k}\varphi_{k-1}+\delta_{k}.
\end{align*}
From~\eqref{eq:103} in Proposition~\ref{inq:err2} and the definition of
$\varphi_k$ in~\eqref{eq:def.phi}, the above inequality
yields~\eqref{eq:ineq.tech}. Finally, $\varphi_0=\varphi_{-1}$ follows
from the initialization $z^0=z^{-1}$ and the first definition in
\eqref{eq:def.phi}.
\end{proof}

The following theorem presents our first result on the asymptotic convergence
of Algorithm~\ref{inertial.hpp} under the summability assumption
\eqref{eq:th:main.01}. Next, Theorem~\ref{th:wc} gives sufficient
conditions~\eqref{eq:alpha} and~\eqref{eq:betatau} on the inertial and
relaxation parameters to assure that
\eqref{eq:th:main.01} is satisfied.

\begin{theorem}[Convergence of Algorithm \ref{inertial.hpp}]
 \label{th:main}
 Let $\{z^k\}$, $\{\tilde z^k\}$, $\{v^k\}$ $\{\lambda_k\}$ and $\{\alpha_k\}$ be generated by \emph{Algorithm \ref{inertial.hpp}}.
%
If $\inf_k \lambda_k>0$ and
\begin{align}
 \label{eq:th:main.01}
 \sum_{k=0}^\infty\,\alpha_{k}\norm{z^{k}-z^{k-1}}^2<+\infty
\end{align}
then $\{z^k\}$ converges to a solution of  the monotone inclusion problem \eqref{eq:mip}. Moreover, $\{\tilde z^k\}$ converges to the same solution and $\{v^k\}$ converges to zero.
\end{theorem}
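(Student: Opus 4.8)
The plan is to combine the quasi-Fej\'er-type inequality~\eqref{eq:ineq.tech} from Proposition~\ref{lm:tech} with the Alvarez--Attouch-type Lemma~\ref{lm:alv.att}, which I take to supply, for an arbitrary $z^*\in T^{-1}(0)$, both the existence of $\lim_{k\to\infty}\varphi_k=\lim_{k\to\infty}\norm{z^k-z^*}^2$ and the summability $\sum_{k}s_k<\infty$. To invoke that lemma I first note that the hypothesis~\eqref{eq:th:main.01} yields $\sum_k\delta_k<\infty$, since $\delta_k=\alpha_k(1+\alpha_k)\norm{z^k-z^{k-1}}^2\le 2\alpha_k\norm{z^k-z^{k-1}}^2$ (recall $\alpha_k\le\alpha<1$); Proposition~\ref{lm:tech} then certifies that $\{\varphi_k\}$, $\{s_k\}$, $\{\alpha_k\}$, $\{\delta_k\}$ meet the hypotheses of Lemma~\ref{lm:alv.att}.

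From $\sum_k s_k<\infty$ I would extract the two asymptotic facts that drive everything else. Since $2-\overline\rho>0$, $\underline\rho>0$, $(1-\sigma^2)^2>0$, and $s_k$ is a constant multiple of a maximum of two nonnegative terms, $s_k\to 0$ forces both $\norm{z^k-w^{k-1}}\to 0$ and $\norm{\tilde z^{k-1}-w^{k-1}}\to 0$; reindexing and using the triangle inequality then gives $\norm{\tilde z^k-w^k}\to 0$ and $\norm{z^{k+1}-\tilde z^{k}}\to 0$. Feeding $\norm{\tilde z^k-w^k}\to 0$ into the right-hand estimate of Lemma~\ref{lm:gauss} shows $\norm{\lambda_k v^k}\to 0$, and because $\inf_k\lambda_k>0$ this yields $v^k\to 0$, the last assertion of the theorem.

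It remains to establish convergence of $\{z^k\}$ and $\{\tilde z^k\}$. Since $\lim_k\norm{z^k-z^*}$ exists, $\{z^k\}$ is bounded and hence admits a cluster point $\bar z$, say $z^{k_j}\to\bar z$. Along this subsequence $\norm{z^{k_j}-\tilde z^{k_j-1}}\to 0$ gives $\tilde z^{k_j-1}\to\bar z$, while $v^{k_j-1}\to 0$; as $v^{k_j-1}\in T(\tilde z^{k_j-1})$ and the graph of the maximal monotone operator $T$ is closed, passing to the limit yields $0\in T(\bar z)$, i.e. $\bar z\in T^{-1}(0)$. I then specialize the already-established convergence of $\{\norm{z^k-z^*}\}$ to $z^*=\bar z$: the subsequential limit is $0$, so $\lim_k\norm{z^k-\bar z}=0$, that is $z^k\to\bar z$. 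Finally $\norm{z^{k+1}-\tilde z^{k}}\to 0$ upgrades this to $\tilde z^k\to\bar z$, and $v^k\to 0$ is already in hand, completing the proof.

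The main obstacle is the non-monotone inertial term $\alpha_k(\varphi_k-\varphi_{k-1})$ in~\eqref{eq:ineq.tech}, which destroys ordinary Fej\'er monotonicity; everything hinges on Lemma~\ref{lm:alv.att} converting the summability hypothesis~\eqref{eq:th:main.01} into both the existence of $\lim_k\varphi_k$ and $\sum_k s_k<\infty$. Once that is secured, the remaining work is the standard demiclosedness/cluster-point argument, whose only delicate points are careful index bookkeeping between $k$ and $k-1$ and the invocation of the closed-graph property of maximal monotone operators.
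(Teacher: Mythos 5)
Your proposal is correct and follows essentially the same route as the paper's proof: Proposition~\ref{lm:tech} plus Lemma~\ref{lm:alv.att} yield existence of $\lim_k\norm{z^k-z^*}$ and $\sum_k s_k<\infty$, then $s_k\to 0$ together with Lemma~\ref{lm:gauss} and $\inf_k\lambda_k>0$ gives the vanishing residuals, and the closed-graph/cluster-point argument finishes the job. The only cosmetic difference is that you inline the final step (specializing the existing limit to $z^*=\bar z$) where the paper simply invokes Opial's lemma (Lemma~\ref{lm:opial}), which encapsulates exactly that argument.
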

\begin{proof}
Define $\{s_k\}$ is as in~\eqref{eq:born}.
Using Proposition \ref{lm:tech}, \eqref{eq:th:main.01}, that $\alpha_k\leq
\alpha<1$ for all $k\geq 0$, and Lemma \ref{lm:alv.att}, it follows that (i)
$\lim_{k\to \infty}\,\|z^k-z^*\|$ exist for every $z^*\in
\Omega:=T^{-1}(0)\neq \emptyset$ and $\sum_{k=1}^\infty\,s_k<+\infty$. So, in
particular, $\{z^k\}$ is bounded and (ii) $\lim_{k\to \infty}\,s_k=0$ .
From the form of~\eqref{eq:born}, that $\lim_{k\to \infty}\,s_k=0$, and the  assumption that $\inf \lambda_k>0$, and Lemma \ref{lm:gauss}, we conclude that
\begin{align}
  \label{eq:ter01}
  \lim_{k\to \infty}\,\|z^k-w^{k-1}\|=
 \lim_{k\to \infty}\,\|\tilde z^k-w^{k}\|=\lim_{k\to \infty}\,\|v^k\|=0.
\end{align}
Now let $z^\infty\in \R^n$ be any cluster point of the bounded sequence
$\{z^k\}$. By~\eqref{eq:ter01}, this point is also a cluster point of
$\{w^k\}$ and $\{\tilde{z}^k\}$. Let $\{k_j\}_{j=0}^\infty$ be an increasing
sequence of indices such that $\tilde z^{k_j}\to z^{\infty}$.  We then have
\begin{align*}
 (\forall j\geq 0)\;\;\; v^{k_j}\in T (\tilde z^{k_j}),\;\; \lim_{j\to \infty}\,v^{k_j}=0 \;\;\mbox{and}\;\;  \lim_{j\to \infty}\,\tilde z^{k_j}=z^{\infty},
\end{align*}
which by the standard closure property of maximal monotone operators yields
$z^\infty\in \Omega=T^{-1}(0)$. Hence, the desired result on $\{z^k\}$ follows
from (i) and Opial's lemma (stated below as Lemma~\ref{lm:opial}). On the other hand, the convergence of
$\{z^k\}$ and
\eqref{eq:ter01} yields the remaining results regarding $\{\tilde z^k\}$ and
$\{v^k\}$.
\end{proof}

 \begin{figure}
    \centering
        \begin{tikzpicture}[scale=2] \centering
    \draw[->,line width = 0.50mm] (-0.2,0) -- (1.4,0) node[right] { $\beta$};
    \draw[->,line width = 0.50mm] (0,-0.2) -- (0,2.4) node[above] {$\bar\rho(\beta)$};
    \draw[domain=0:1,smooth,variable=\x,red,line width = 0.50mm,scale=1] plot ({\x},{(2*(\x -1)^2)/((2*(\x -1)^2)+3*\x -1)});
    \draw[red] (0,2) circle (0.35mm);
    \draw[red] (1,0) circle (0.35mm);
    \draw[dashed] (0.3333,0) -- (0.3333,1);
     \draw[dashed] (0,1) -- (0.3333,1);
        \node[below left,black] at (0,0) {0};
        \node[below right ,black] at (1,0) {1};
        \node[above left,black] at (0,2) {2};
        \node[below ,black] at (0.3333,0) {$\frac{1}{3}$};
        \node[left,black] at (0,1) {1};
    \end{tikzpicture}
    \caption{{The relaxation parameter upper bound
    $\overline\rho(\beta)$ from \eqref{eq:betatau} 
    as a function of inertial step upper bound $\beta>0$
    of \eqref{eq:alpha}. Note that $\overline \rho(1/3)=1$, while
    $\overline \rho(\beta)>1$ whenever $\beta<1/3$.}}
    \label{fig02}
    \end{figure}
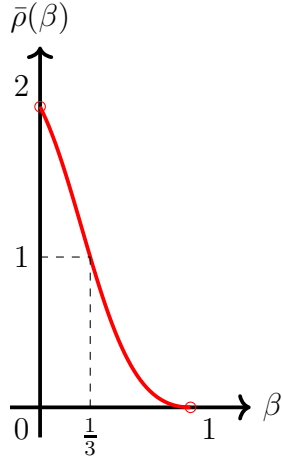

\begin{theorem}[Convergence of Algorithm \ref{inertial.hpp}]
 \label{th:wc}
Let $\{z^k\}$, $\{\alpha_k\}$ and $\{\lambda_k\}$ be generated by \emph{Algorithm \ref{inertial.hpp}}.
Assume that $\alpha\in [0,1)$, $\overline{\rho}\in (0,2)$ and $\{\alpha_k\}$ satisfy the following (for some $\beta>0$):
\begin{align}
 \label{eq:alpha}
 0\leq \alpha_{k}\leq \alpha_{k+1}\leq \alpha<\beta<1\qquad \forall k\geq 0
\end{align}
and
\begin{align}
\label{eq:betatau}
\overline{\rho}=\overline{\rho}(\beta):=\dfrac{2(\beta-1)^2}{2(\beta-1)^2+3\beta-1}.
\end{align}
Then, 
\begin{align}
  \label{eq:sum}
 \sum_{k=1}^\infty\norm{z^k-z^{k-1}}^2< +\infty.
\end{align}
As a consequence, it follows that under the assumptions \eqref{eq:alpha} and \eqref{eq:betatau} the sequence $\{z^k\}$
generated by \emph{Algorithm \ref{inertial.hpp}} converges to a solution of the monotone inclusion
problem \eqref{eq:mip} whenever $\inf \lambda_k>0$.  Moreover, under the above assumptions, $\{\tilde z^k\}$ converges to the same solution and $\{v^k\}$ converges to zero.
\end{theorem}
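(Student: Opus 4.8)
The plan is to reduce everything to the single summability estimate \eqref{eq:sum}: once $\sum_k\norm{z^k-z^{k-1}}^2<\infty$ is known, then since $\alpha_k\le\alpha$ we immediately get $\sum_k\alpha_k\norm{z^k-z^{k-1}}^2\le\alpha\sum_k\norm{z^k-z^{k-1}}^2<\infty$, which is precisely hypothesis \eqref{eq:th:main.01}; Theorem~\ref{th:main} then delivers all the stated conclusions about $\{z^k\}$, $\{\tilde z^k\}$ and $\{v^k\}$ under $\inf_k\lambda_k>0$. So the whole burden is to prove \eqref{eq:sum} directly from the inertial inequality \eqref{eq:ineq.tech} of Proposition~\ref{lm:tech}, \emph{without} assuming summability in advance.

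To this end I would run a discrete Lyapunov/telescoping argument. Write $d_k:=\norm{z^k-z^{k-1}}^2$ and $c:=(2-\overline{\rho})/\overline{\rho}$, and keep $\varphi_k$, $\delta_k=\alpha_k(1+\alpha_k)d_k$ as in \eqref{eq:def.phi}. First discard the second branch of the max in \eqref{eq:born}, retaining only $s_{k+1}\ge c\,\norm{z^{k+1}-w^k}^2$. Since $w^k=z^k+\alpha_k(z^k-z^{k-1})$ we have $z^{k+1}-w^k=(z^{k+1}-z^k)-\alpha_k(z^k-z^{k-1})$, and the elementary bound $\norm{a-\alpha_k b}^2\ge(1-\alpha_k)\norm{a}^2-\alpha_k(1-\alpha_k)\norm{b}^2$ (from $2\inner{a}{b}\le\norm{a}^2+\norm{b}^2$ and $\alpha_k\ge0$) converts $s_{k+1}$ into a lower bound involving $d_{k+1}$ and $d_k$. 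Substituting into \eqref{eq:ineq.tech} and introducing $\Gamma_k:=\varphi_k-\alpha_k\varphi_{k-1}$, the monotonicity $\alpha_k\le\alpha_{k+1}$ (so that the surplus term $(\alpha_{k+1}-\alpha_k)\varphi_k\ge0$ may be dropped) yields a clean recursion
\begin{align*}
\Gamma_{k+1}-\Gamma_k \le -A_k\,d_{k+1}+B_k\,d_k,\qquad A_k=c(1-\alpha_k),\quad B_k=\alpha_k(1+\alpha_k)+c\,\alpha_k(1-\alpha_k).
\end{align*}

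The crux — and the place where the exact form \eqref{eq:betatau} is used — is to verify that upon summing this recursion the net coefficient of each $d_{k+1}$ is bounded below by a fixed positive constant. Telescoping and reindexing, that coefficient becomes $A_k-B_{k+1}$; using $\alpha_k\le\alpha_{k+1}$ one reduces it to $A_k-B_{k+1}\ge(1-\alpha_{k+1})^2\big(c-g(\alpha_{k+1})\big)$, where $g(t):=t(1+t)/(1-t)^2$. A short computation shows that \eqref{eq:betatau} is precisely calibrated so that $c=(2-\overline{\rho})/\overline{\rho}=g(\beta)$; since $g$ is strictly increasing on $[0,1)$ and $\alpha_{k+1}\le\alpha<\beta$, this gives $c-g(\alpha_{k+1})\ge c-g(\alpha)>0$, hence $A_k-B_{k+1}\ge(1-\alpha)^2\big(c-g(\alpha)\big)=:\epsilon>0$ uniformly in $k$.

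Summing from $1$ to $N$ then yields $\Gamma_{N+1}+A_N d_{N+1}+\epsilon\sum_{k=1}^{N-1}d_{k+1}\le\Gamma_1+B_1 d_1$. The remaining delicate point — which I expect to be the main obstacle — is that this is useless unless $\Gamma_{N+1}$ is bounded below, i.e. unless $\{\varphi_k\}$ is a priori bounded, which I do not yet have. I would extract it from the same inequality: dropping the nonnegative terms gives $\Gamma_{N+1}\le\Gamma_1+B_1 d_1=:C_0$, that is $\varphi_{N+1}\le\alpha_{N+1}\varphi_N+C_0\le\alpha\varphi_N+C_0$, and iterating this one-sided recursion with $\alpha<1$ forces $\sup_N\varphi_N<\infty$. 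Consequently $\Gamma_{N+1}\ge-\alpha\sup_N\varphi_N>-\infty$, so $\epsilon\sum_{k=1}^{N-1}d_{k+1}$ stays bounded uniformly in $N$, giving $\sum_k d_k<\infty$, which is \eqref{eq:sum}. Feeding this back into Theorem~\ref{th:main} as described above completes the proof.
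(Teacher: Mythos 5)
Your proposal is correct, and at its core it runs the same Lyapunov/telescoping argument as the paper: both reduce the theorem to \eqref{eq:sum} and then invoke Theorem~\ref{th:main}, both start from Proposition~\ref{lm:tech}, keep only the first branch of the max in \eqref{eq:born}, and use the identical Young-inequality bound on $\norm{z^{k+1}-w^k}^2$, and both close the loop with the same bootstrap ($\Gamma_{N+1}\le C_0$ gives $\varphi_{N+1}\le\alpha\varphi_N+C_0$, hence boundedness, hence the telescoped sum is finite). The differences are organizational but worth noting. First, the paper's Lyapunov quantity $\mu_k=\varphi_k-\alpha_{k-1}\varphi_{k-1}+\gamma_k\norm{z^k-z^{k-1}}^2$ (see \eqref{eq:305}) absorbs the lagging term $B_kd_k$ into the energy itself, yielding a clean one-step decrease $\mu_{k+1}-\mu_k\le -q(\alpha_{k+1})d_{k+1}$, whereas your $\Gamma_k=\varphi_k-\alpha_k\varphi_{k-1}$ leaves $B_kd_k$ on the right and handles it by reindexing under the sum; these are algebraically the same device, and indeed your net coefficient $(1-\nu)^2\bigl(c-g(\nu)\bigr)$ expands exactly to the paper's quadratic $q(\nu)$ of \eqref{eq:def.q}. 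Second — and this is the genuine streamlining — you replace the paper's appeal to Lemma~\ref{lm:inverse} (the inverse-function relation between $\beta$ and $\overline{\rho}$) and Lemma~\ref{lm:quadratic} (root location and monotonicity of $q$) with the one-line calibration $c=(2-\overline{\rho})/\overline{\rho}=g(\beta)$ for $g(t)=t(1+t)/(1-t)^2$, together with the evident monotonicity of $g$ on $[0,1)$. This makes transparent \emph{why} \eqref{eq:betatau} is exactly the critical threshold, and it dispenses with the most laborious auxiliary lemma in the paper's appendix; what the paper's packaging buys in exchange is a single monotone scalar sequence $\{\mu_k\}$, which it can also reuse directly to get the explicit bound \eqref{H13} on $\varphi_k$.
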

\begin{proof}
Using \eqref{eq:ext.hpe}, the Cauchy-Schwarz inequality and the Young inequality
$2ab\leq a^2+b^2$ with $a:=\norm{z^{k+1}-z^{k}}$ and $b:=\norm{z^{k}-z^{k-1}}$
we find
\begin{align}
 \norm{z^{k+1}-w^{k}}^2
   \nonumber
    &=\norm{z^{k+1}-z^{k}}^2+\alpha_{k}^2\norm{z^{k}-z^{k-1}}^2-2\alpha_{k}\inner{z^{k+1}-z^{k}}{z^{k}-z^{k-1}}\\
  \nonumber
    & \geq \norm{z^{k+1}-z^{k}}^2+\alpha_{k}^2\norm{z^{k}-z^{k-1}}^2-\alpha_{k} \left(2\norm{z^{k+1}-z^{k}}\norm{z^{k}-z^{k-1}}\right)\\
    & \geq (1-\alpha_{k}) \norm{z^{k+1}-z^{k}}^2-\alpha_{k}(1-\alpha_{k})\norm{z^{k}-z^{k-1}}^2. \label{startthm2.5}
\end{align}
Starting with a rearrangement of~\eqref{eq:ineq.tech}, we then obtain
\begin{align}
 &\varphi_{k+1}-\varphi_{k}-
   \alpha_{k}(\varphi_{k}-\varphi_{k-1}) \leq \delta_{k}- s_{k+1}\nonumber\\
  &\quad\leq \alpha_{k}(1+\alpha_{k})\norm{z^{k}-z^{k-1}}^2-(2- \overline\rho)\overline{\rho}^{-1} \norm{z^{k+1}-w^{k}}^2
  \qquad\qquad\qquad\qquad \text{[by \eqref{eq:born} and \eqref{eq:def.phi}]}\nonumber\\
  &\quad\leq \alpha_{k}(1+\alpha_{k})\norm{z^{k}-z^{k-1}}^2 \nonumber\\
  &\qquad\qquad-(2- \overline\rho)\overline{\rho}^{-1} 
        \left[(1-\alpha_{k}) \norm{z^{k+1}-z^{k}}^2
        -\alpha_{k}(1-\alpha_{k})\norm{z^{k}-z^{k-1}}^2\right]
        \qquad\text{[by \eqref{startthm2.5}]}\nonumber\\
  &\quad=-(2- \overline\rho)\overline{\rho}^{-1}(1-\alpha_{k}) \norm{z^{k+1}-z^{k}}^2+ \left[\alpha_{k}(1+\alpha_{k})+(2- \overline\rho)\overline{\rho}^{-1}\alpha_{k}(1-\alpha_{k})\right]\norm{z^{k}-z^{k-1}}^2, \nonumber\\
  &\quad=-(2- \overline\rho)\overline{\rho}^{-1}(1-\alpha_{k}) \norm{z^{k+1}-z^{k}}^2+ 
  \gamma_k\norm{z^{k}-z^{k-1}}^2, \label{almostdone}
\end{align}
where
\begin{align}
 \label{eq:304}
  \gamma_{k}:= -2(\overline{\rho}^{\,-1}-1)\alpha_k^2+2\,\overline{\rho}^{\,-1}\alpha_k\qquad \forall k\geq 0.
\end{align}
Some elementary algebraic manipulations of~\eqref{almostdone} then yield
\begin{align}
 \label{eq:301}
 \varphi_{k+1}-\varphi_{k}-\alpha_{k}(\varphi_{k}-\varphi_{k-1})-\gamma_{k} \norm{z^{k}-z^{k-1}}^2
\leq -(2\,\overline{\rho}^{\,-1}-1) (1-\alpha_{k})\norm{z^{k+1}- z^{k}}^2\quad \forall k\geq 0.
\end{align}
Define now the scalar function:
\begin{align}
  \label{eq:def.q}
%
q(\nu):= 2(\overline{\rho}^{\,-1}-1)\nu^{\,2}-(4\overline{\rho}^{\,-1}-1)\nu + 2\overline{\rho}^{\,-1}-1, 
\end{align}
and
\begin{align}
 \label{eq:305}
\mu_0:=(1-\alpha_0)\varphi_0\geq 0,\qquad
\mu_k:=\varphi_k-\alpha_{k-1}\varphi_{k-1}+\gamma_{k}\norm{z^k-z^{k-1}}^2\quad \forall k\geq 1,
\end{align}
where $\varphi_k$ is as in \eqref{eq:def.phi}.
 Using \eqref{eq:301}-\eqref{eq:305} and the assumption that $\{\alpha_k\}$ is nondecreasing --- see \eqref{eq:alpha} --- we obtain, for all $k\geq 0$,
\begin{align}
 \label{eq:307}
 \nonumber
 \mu_{k+1}-\mu_{k}
\nonumber
  &\leq
\left[\varphi_{k+1}-\varphi_{k}-\alpha_{k}(\varphi_{k}-\varphi_{k-1})-
\gamma_{k}\norm{z^{k}-z^{k-1}}^2\right]+\gamma_{k+1}\norm{z^{k+1}-z^{k}}^2\\
 \nonumber
 &\leq \left[\gamma_{k+1}- (2\overline{\rho}^{\,-1}-1) (1-\alpha_{k+1})\right]\norm{z^{k+1}-z^{k}}^2\\
\nonumber
&=-\left[2(\overline{\rho}^{\,-1}-1)\alpha_{k+1}^2-(4\overline{\rho}^{\,-1}-1)\alpha_{k+1}+2\overline{\rho}^{\,-1}-1\right]\norm{z^{k+1}-z^{k}}^2\\
&=-q(\alpha_{k+1})\norm{z^{k+1}-z^{k}}^2.
\end{align}
We will now show that $q(\alpha_{k+1})$ admits a uniform positive lower bound.
To this end, note first that from \eqref{eq:betatau} and Lemma
\ref{lm:inverse} below that we have
\begin{align*}
 \beta =\dfrac{2(2-\overline{\rho})}{4-\overline{\rho}+\sqrt{\overline{\rho} (16-7\overline{\rho})}}.
\end{align*}
Using the latter identity, \eqref{eq:def.q}, and Lemma \ref{lm:quadratic}
below with $a=2(\overline{\rho}^{\,-1}-1)$, $b=4\overline{\rho}^{\,-1}-1$, and
$c=2\overline{\rho}^{\,-1}-1$, we conclude that $q(\cdot)$ is decreasing in
$[0,\beta]$ and $\beta>0$ is a root of $q(\cdot)$.
Thus, in view of~\eqref{eq:alpha}, we conclude that
\begin{align}
  \label{eq:noite04}
 q(\alpha_{k+1})\geq q(\alpha)>q(\beta)=0,
\end{align}
which gives the desired uniform positive lower bound on $q(\alpha_{k+1})$.

Using \eqref{eq:307} and \eqref{eq:noite04} we find
\begin{align}
  \label{eq:ineq.mu}
 \norm{z^{k+1}-z^{k}}^2\leq \dfrac{1}{q(\alpha)}(\mu_{k}-\mu_{k+1}),\quad \forall k\geq 0,
\end{align}
which, in turn, combined with \eqref{eq:alpha} and the definition of $\mu_k$ in \eqref{eq:305},  gives
\begin{align}
  \label{eq:sum.q}
 \nonumber
 \sum_{j=0}^k\,\norm{z^{j+1}-z^{j}}^2&\leq \dfrac{1}{q(\alpha)}(\mu_0-\mu_{k+1}),\\
             &\leq \dfrac{1}{q(\alpha)}(\mu_0+\alpha \varphi_{k}) \quad \forall k\geq 0.
\end{align}
Note now that \eqref{eq:ineq.mu}, \eqref{eq:alpha} and \eqref{eq:305} also yield
\begin{align*}
  \mu_0\geq \ldots \geq \mu_{k+1}=&\varphi_{k+1}-\alpha_{k}\varphi_{k}+\gamma_{k+1}\|z^{k+1}-z^{k}\|^{2} \\
    \geq&  \varphi_{k+1}-\alpha\varphi_{k},\quad \forall k\geq 0,
\end{align*}
and so,
\begin{equation}
 \label{H13}
    \varphi_{k+1}\leq \alpha^{k+1}\varphi_{0}+\frac{\mu_{0}}{1-\alpha}\leq \varphi_{0}+\frac{\mu_{0}}{1-\alpha} \qquad \forall k\geq -1.
\end{equation}
Hence, \eqref{eq:sum} follows directly from \eqref{eq:sum.q} and \eqref{H13}.
On the other hand, the second statement of the theorem follows from \eqref{eq:sum} and Theorem \ref{th:main} (recall that
$\alpha_k\leq \alpha<1$ for all $k\geq 0$).
\end{proof}

\noindent We close this section with a few further remarks about the analysis
of Algorithm~\ref{inertial.hpp}:
\begin{itemize}
\item[(i)] Conditions \eqref{eq:alpha} and
\eqref{eq:betatau} on $\{\alpha_k\}$, $\alpha$ and $\overline{\rho}$ 
guarantee that the summability condition \eqref{eq:th:main.01} is satisfied,
thus guaranteeing the convergence of Algorithm~\ref{inertial.hpp}.  
Similar conditions were also recently proposed and studied 
in~\cite{alv.mar-ine.svva19,att.cab-con.pre18}.
Since
Algorithm~\ref{inertial.hpp} is be the basis of the DR and ADMM methods
developed in the next two sections, conditions
\eqref{eq:alpha} and \eqref{eq:betatau} will also play an important role in
their convergence analyses.
\item[(ii)] If we set $\beta=1/3$ in \eqref{eq:alpha}, then it
follows immediately
from \eqref{eq:betatau} that $\overline{\rho}=1$. On the
other hand, we have $\overline{\rho}>1$ in \eqref{eq:betatau} whenever
$\beta<1/3$  (see also Figure~\ref{fig02}). Setting $\beta=1/3$ in
\eqref{eq:alpha} is corresponds to the standard strategy in the literature of
inertial proximal algorithms; see \emph{e.g.} \cite{alv.att-iner.svva01,
che.cha.ma-ine.sjis15}.
\end{itemize}


\section{A partially inexact inertial-relaxed Douglas-Rachford (DR) algorithm}
\label{sec:dr}

Consider the monotone inclusion problem of finding $z\in \R^n$ such that
\begin{align}
  \label{eq:mip_ab}
 0\in A(z)+B(z)
\end{align}
where $A$ and $B$ are (set-valued) maximal monotone operators on $\R^n$ for which
the solution set $(A+B)^{-1}(0)$ of \eqref{eq:mip_ab} is nonempty.

A popular operator splitting algorithms for finding approximate
solutions to \eqref{eq:mip_ab} is the Douglas-Rachford (DR) algorithm
\cite{dou.rac-num.tams56, lio.mer-spl.sjna79, eck.ber-dou.mp92}:
\begin{align}
 \label{eq:dr_iter}
  z^{k+1} = J_{\gamma A}\big(2J_{\gamma B}(z^k)-z^k\big)+z^k-J_{\gamma B}(z^k)
  \quad \forall k\geq 0,
\end{align}
where $\gamma>0$ is a scaling parameter, $z^k$ is the current iterate and
$J_{\gamma A} = (\gamma A+I)^{-1}$ and $J_{\gamma B} = (\gamma B+I)^{-1}$ are
the resolvent operators of $A$ and $B$, respectively. The DR algorithm
\eqref{eq:dr_iter} is a splitting algorithm for solving the (structured)
inclusion \eqref{eq:mip_ab} in the sense that the resolvents $J_{\gamma A}$
and $J_{\gamma B}$ are employed separately, but the resolvent
$J_{\gamma(A+B)}$ of $A+B$ is not.  Such methods may be useful in
situation in which the values of $J_{\gamma A}$ and $J_{\gamma B}$ are
relatively easy to evaluate in comparison to those of $J_{\gamma(A+B)}$.

This section will develop an inexact version of the DR algorithm
\eqref{eq:dr_iter} for the situation in which the resolvent of one of the
operators, say $B$, is relatively hard, but evaluating $J_{\gamma A}$ is a
simple calculation. To this end, we consider the following equivalent
formulation of \eqref{eq:dr_iter} (see, \emph{e.g.}, \cite{eck.ber-dou.mp92}):
given some $r^k,b^k\in \R^n$,
\begin{align}
\label{eq:dr.spp} 
&\mbox{Find}\;\; (s^{k+1},b^{k+1})\in B\;\;\mbox{such that}\;\;
s^{k+1}+\gamma b^{k+1} = r^k+\gamma b^k;\\ 
\label{eq:dr.spp02} 
 &\mbox{Find}\;\; (r^{k+1},a^{k+1})\in A\;\;\mbox{such that}\;\; 
r^{k+1}+\gamma a^{k+1} = s^{k+1} - \gamma b^{k+1}.
\end{align}
In this case, $z^k=r^k+\gamma b^k$. Since the resolvent $J_{\gamma A}$ of $A$
is assumed to be easily computable, the pair $(r^{k+1},a^{k+1})$ in
\eqref{eq:dr.spp02} is explicitly given by
\begin{align*}
r^{k+1} &= J_{\gamma A}(s^{k+1}-\gamma b^{k+1}) & 
a^{k+1} &= \gamma^{-1}(s^{k+1}-r^{k+1}) - b^{k+1}.
\end{align*}
For $B$, we by contrast suppose that exact computation of the pair $(s^{k+1},
b^{k+1})$ satisfying \eqref{eq:dr.spp} requires a relatively time-consuming
iterative process, which we model immediately below by the notion of a
$\mathcal{B}$-procedure as introduced in~\cite{eck.yao-rel.mp17}.
We first remark that~\eqref{eq:dr.spp} can be posed in the
more general framework of solving monotone inclusion problems of the form
\begin{align}
 \label{eq:jordan10}
 0\in s + \gamma B(s)  - (r+\gamma b),
\end{align}
where $r,b\in \R^n$ and $\gamma>0$.

\mgap

\begin{definition} [$\mathcal{B}$--procedure for solving \eqref{eq:jordan10}]
 \label{def:bp}
 A $\mathcal{B}$--procedure for (approximately) solving any instance of
\eqref{eq:jordan10} is  a mapping $\mathcal{B}:\R^n\times \R^n\times
\R_{++}\times \R^n\times \R^n\times \N^*\to \R^n\times \R^n$ such that if one
lets $(s^\ell, b^\ell)=\mathcal{B}(r, b,\gamma,\bar s, \bar b, \ell)$ for all
$\ell \in \N^*$ and any given $ r,b,\bar s, \bar b\in \R^n$ and $\gamma>0$,
then $b^\ell\in B(s^\ell)$, for all $\ell\in \N^*$,  the sequence $\{(s^\ell,
b^\ell)\}$  is convergent, and $s^\ell+\gamma b^\ell \to r+\gamma b$.
\end{definition}

\noindent
Following~\cite{eck.yao-rel.mp17}, the intuitive meaning of $(s^\ell,
b^\ell)=\mathcal{B}(r, b,\gamma,\bar s, \bar b, \ell)$ is that $(s^\ell,
b^\ell)$ is the $\ell^{\text{th}}$ trial approximation generated by some
iterative procedure for solving~\eqref{eq:jordan10}, starting from some
initial guess $(\bar s, \bar b)\in \R^n\times \R^n$. We refer the interested
reader to \cite[Section 5]{eck.yao-rel.mp17} for a more detalied discussion
and interpretation on the $\mathcal{B}$-procedure concept.

We make the following standing assumption:

\begin{assumption} 
 \label{asp:bp}
There exists a $\mathcal{B}$-procedure (according to Definition \ref{def:bp})
for approximately solving any instance of \eqref{eq:jordan10}.
\end{assumption}

\begin{algfloat}
\noindent
\fbox{
\addtolength{\linewidth}{-2\fboxsep}
\addtolength{\linewidth}{-2\fboxrule}
\begin{minipage}{0.96\linewidth}
\begin{algorithm}
\label{alg:ine_dr}
{\bf A partially inexact inertial--relaxed Douglas-Rachford splitting algorithm for solving \eqref{eq:mip_ab}}
\end{algorithm}
\begin{itemize}
\item[] Choose $\gamma>0$, $0\leq \alpha, \sigma<1$ and $0<\underline{\rho}<\overline{\rho}<2$. 
           Initialize $(s^0,b^0,r^0)=(s^{-1},b^{-1},r^{-1})\in (\R^n)^3$. 
\item [] {\bf for}\, $k=0,1,2,\dots$\,{\bf do}
\begin{enumerate}
\item[] Choose $\alpha_k\in [0,\alpha]$ and define 
  \begin{align}
    \label{eq:cartano}
   (\hat s^k,\hat b^k,\hat r^k)=(s^k,b^k,r^k)+\alpha_k\left [(s^k,b^k,r^k)-(s^{k-1},b^{k-1},r^{k-1})\right]
   \end{align}
\end{enumerate}
\item[] {\bf repeat}\,$\lbrace \mbox{{\bf for}}\,\ell=1,2,\dots\rbrace$
\begin{enumerate}
\item[] Improve the solution to 
\begin{align}
 \label{eq:jordan}
  b^{k,\ell}\in B(s^{k,\ell}),\quad s^{k,\ell}+\gamma b^{k,\ell}\approx \hat r^k+\gamma \hat b^k
\end{align}
%
by setting 
\begin{align}
  \label{eq:ferreiro}
 (s^{k,\ell},b^{k,\ell})=\mathcal{B}(\hat r^k, \hat b^k, \gamma, \hat s^k, \hat b^k,\ell)
\end{align}
%
(thus incrementally executing a step of the
$\mathcal{B}$--procedure)
\item[] Exactly find $(r^{k,\ell}, a^{k,\ell})\in \R^n\times \R^n$ such that
\begin{align}
 \label{eq:tartaglia}
    a^{k,\ell}\in A(r^{k,\ell}),\quad r^{k,\ell}+\gamma a^{k,\ell}=s^{k,\ell}-\gamma b^{k,\ell}
\end{align}
\end{enumerate}

\item[] {\bf until}  
\begin{align}
 \label{eq:lagrange}
  \norm{s^{k,\ell}+\gamma b^{k,\ell}-(\hat r^k+\gamma \hat b^k)}^2\leq \sigma^2\left(\norm{r^{k,\ell}+\gamma b^{k,\ell}-(\hat r^k+\gamma \hat b^k)}^2+\norm{s^{k,\ell}-r^{k,\ell}}^2\right)
\end{align}
\item[] {\bf if} 
$s^{k,\ell}=r^{k,\ell}$, then {\bf stop}
\item[] {\bf otherwise}, choose $\rho_k\in
[\,\underline{\rho},\overline{\rho}\,]$ and set
\begin{align}
 \label{eq:abel}
 & s^{k+1}=s^{k,\ell},\quad r^{k+1}=r^{k,\ell}\\
 \label{eq:cayley}
 & \theta_{k+1}=\dfrac{\inner{(\hat r^k - r^{k,\ell})+\gamma(\hat b^k - b^{k,\ell})}{s^{k,\ell}-r^{k,\ell}}}{\norm{s^{k,\ell}-r^{k,\ell}}^2}\\
 \label{eq:galois}
 & b^{k+1}=\hat b^k - \gamma^{-1}\left[(1-\rho_k\,\theta_{k+1})r^{k+1}+\rho_k\,\theta_{k+1} s^{k+1}-\hat r^k\right]
\end{align}
\item[] {\bf end for}
  \end{itemize}
\end{minipage}
} 
\end{algfloat}

We now combine the hypothesized $\mathcal{B}$-procedure with
an acceptance criterion for the
approximate solution of~\eqref{eq:dr.spp}.  We will follow the general
approach of~\cite{eck.yao-rel.mp17}, which is to exploit the connection
between the DR algorithm \eqref{eq:dr.spp}-\eqref{eq:dr.spp02} and the
proximal point algorithm as established in~\cite{eck.ber-dou.mp92}.
Specifically,
%
%
the DR algorithm \eqref{eq:dr.spp}, \eqref{eq:dr.spp02} is a special instance of the PP algorithm  in the sense that,
\begin{align*}
 r^{k+1}+\gamma b^{k+1} = (S_{\gamma, A, B}+I)^{-1}(r^k+\gamma b^k)\qquad \forall k\geq 0
\end{align*}
where the ``splitting" operator $S_{\gamma,A,B}$ is defined as~\cite{eck.ber-dou.mp92}
\begin{align}
 \label{eq:def_S}
 S_{\gamma,A,B}:=\{(r+\gamma b, s-r)\in \R^n\times \R^n\,|\, b\in B(s),\;\; a\in A(r),\;\; \gamma a+r=s-\gamma b\}.
\end{align}
The operator defined in \eqref{eq:def_S} is maximal monotone and 
\begin{align}
 \label{eq:eck_bert}
  (A+B)^{-1}(0) = J_{\gamma B}\big(S_{\gamma, A, B}^{\,-1}(0)\big),
\end{align}
which, in particular, gives that any solution $z^*\in \R^n$ of the monotone inclusion problem \eqref{eq:mip} with $T:=S_{\gamma,A,B}$, namely
\begin{align}
  \label{eq:mip_so}
 0\in S_{\gamma, A, B}(z)
\end{align}
yields a solution $x^*:=J_{\gamma B}(z^*)$ of \eqref{eq:mip_ab}.

Here, we follow a similar derivation
to~\cite{eck.yao-rel.mp17}, but use
Algorithm~\ref{inertial.hpp} of Section~\ref{sec:alg} to~\eqref{eq:mip_so} in
place of the HPE method of~\cite{sol.sva-hyb.svva99}. The result is an
inertial-relaxed inexact relative-error DR algorithm for solving
\eqref{eq:mip_ab}. We should emphasize that even $\alpha_k \equiv 0$ (there is
no inertial step) and $\rho_k \equiv 1$ (no overrelaxation), the resulting
algorithm differs from that of \cite{eck.yao-rel.mp17}.  This difference
arises because the underlying ``convergence engine'' of
Algorithm~\ref{inertial.hpp} is a form of hybrid proximal-projection (HPP)
algorithm, whereas~\cite{eck.yao-rel.mp17} used an HPE algorithm in the
equivalent role, using an extragradient step instead of
projection.

The proposed algorithm for solving \eqref{eq:mip_ab} is shown as
Algorithm~\ref{alg:ine_dr}.   We should mention that a different
inexact DR splitting algorithm in which relative errors are allowed in both
\eqref{eq:jordan} and \eqref{eq:tartaglia} was recently proposed and studied
in \cite{sva-ful.preprint18}, but without computational testing.
The following proposition shows that Algorithm \ref{alg:ine_dr} is indeed a
special instance of Algorithm \ref{inertial.hpp} for solving \eqref{eq:mip}
with $T:=S_{\gamma, A, B}$.

\begin{proposition}
 \label{pr:2e1}
 Consider the sequences evolved by Algorithm~\ref{alg:ine_dr} and for each
 $k\geq 0$ let $\ell(k)$ denote the value of $\ell$ for
 which~\eqref{eq:lagrange} is satisfied. For each $k \geq 0$, define, with $\gamma$ as in Algorithm~\ref{alg:ine_dr}, 
\begin{align}
 \begin{aligned}
 \label{eq:cauchy}
 z^k &:=r^k+\gamma b^k &  w^k &:=\hat r^k+\gamma \hat b^k\\
 \tilde z^k&:=r^{k,\ell(k)}+\gamma b^{k,\ell(k)} & v^k &:=s^{k,\ell(k)}-r^{k,\ell(k)}.
\end{aligned}
\end{align}
Then these latter sequences satisfy the conditions
\eqref{eq:ext.hpe}-\eqref{eq:err.hpe2} of \emph{Algorithm \ref{inertial.hpp}}
with $\lambda_k  \equiv 1$ and $T=S_{\gamma, A,B}$.
\end{proposition}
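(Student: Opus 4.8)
The plan is to verify each of the three conditions \eqref{eq:ext.hpe}, \eqref{eq:err.hpe}, and \eqref{eq:err.hpe2} of Algorithm~\ref{inertial.hpp} under the identifications \eqref{eq:cauchy}, with $T = S_{\gamma,A,B}$ and $\lambda_k \equiv 1$. The inertial step \eqref{eq:ext.hpe} should be the easiest: the definition \eqref{eq:cartano} in Algorithm~\ref{alg:ine_dr} extrapolates the triple $(s^k,b^k,r^k)$, and since both $z^k$ and $w^k$ in \eqref{eq:cauchy} are affine combinations $r+\gamma b$ of the components of these triples, applying $z \mapsto r + \gamma b$ to \eqref{eq:cartano} immediately gives $w^k = z^k + \alpha_k(z^k - z^{k-1})$. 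First I would carry out this short linear-combination computation.

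Next I would establish the inclusion $v^k \in S_{\gamma,A,B}(\tilde z^k)$ required by \eqref{eq:err.hpe}. Here I would appeal directly to the definition \eqref{eq:def_S} of the splitting operator: at the accepted iteration $\ell = \ell(k)$, step \eqref{eq:jordan}--\eqref{eq:ferreiro} guarantees $b^{k,\ell}\in B(s^{k,\ell})$ via the $\mathcal{B}$-procedure, and step \eqref{eq:tartaglia} guarantees $a^{k,\ell}\in A(r^{k,\ell})$ together with the exact relation $r^{k,\ell}+\gamma a^{k,\ell} = s^{k,\ell}-\gamma b^{k,\ell}$. These are precisely the three membership/equation conditions in \eqref{eq:def_S} with $s = s^{k,\ell}$, $b = b^{k,\ell}$, $r = r^{k,\ell}$, $a = a^{k,\ell}$, so the pair $(r^{k,\ell}+\gamma b^{k,\ell},\, s^{k,\ell}-r^{k,\ell}) = (\tilde z^k, v^k)$ belongs to $S_{\gamma,A,B}$, which is exactly $v^k \in S_{\gamma,A,B}(\tilde z^k)$.

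For the relative-error inequality in \eqref{eq:err.hpe} with $\lambda_k = 1$, I would translate the algorithm's acceptance test \eqref{eq:lagrange} into the required form. Using $\tilde z^k = r^{k,\ell}+\gamma b^{k,\ell}$, $w^k = \hat r^k + \gamma \hat b^k$, and $v^k = s^{k,\ell}-r^{k,\ell}$, I would check that $\lambda_k v^k + \tilde z^k - w^k = v^k + \tilde z^k - w^k = (s^{k,\ell}+\gamma b^{k,\ell}) - (\hat r^k + \gamma \hat b^k)$, which is the quantity on the left of \eqref{eq:lagrange}. Similarly $\tilde z^k - w^k = (r^{k,\ell}+\gamma b^{k,\ell}) - (\hat r^k + \gamma \hat b^k)$ matches the first term on the right, and $\lambda_k v^k = v^k = s^{k,\ell}-r^{k,\ell}$ matches the second; hence \eqref{eq:lagrange} is literally \eqref{eq:err.hpe} rewritten in the $z$-variables. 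I expect the main obstacle to be the projection/update step \eqref{eq:err.hpe2}: I would need to show that the scalar $\theta_{k+1}$ defined in \eqref{eq:cayley} equals $\inner{w^k - \tilde z^k}{v^k}/\norm{v^k}^2$ under \eqref{eq:cauchy}, and then verify that the composite update \eqref{eq:abel}--\eqref{eq:galois} for $(s^{k+1}, r^{k+1}, b^{k+1})$ reproduces $z^{k+1} = r^{k+1}+\gamma b^{k+1} = w^k - \rho_k\theta_{k+1} v^k$. The numerator of \eqref{eq:cayley} is $\inner{(\hat r^k - r^{k,\ell}) + \gamma(\hat b^k - b^{k,\ell})}{s^{k,\ell}-r^{k,\ell}} = \inner{w^k - \tilde z^k}{v^k}$ by the same substitutions, so $\theta_{k+1}$ matches; the real work is substituting \eqref{eq:galois} into $r^{k+1}+\gamma b^{k+1}$ and confirming the $\gamma$-weighted cancellation collapses it to $w^k - \rho_k\theta_{k+1}(s^{k+1}-r^{k+1}) = w^k - \rho_k\theta_{k+1} v^k$, which is \eqref{eq:err.hpe2}. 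I would finish by recording the stopping-condition correspondence $s^{k,\ell}=r^{k,\ell} \iff v^k = 0$, completing the identification.
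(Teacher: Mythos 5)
Your proposal is correct and follows essentially the same route as the paper's own proof: verify \eqref{eq:ext.hpe} by applying the linear map $(s,b,r)\mapsto r+\gamma b$ to \eqref{eq:cartano}, obtain the inclusion $v^k\in S_{\gamma,A,B}(\tilde z^k)$ directly from \eqref{eq:def_S} together with \eqref{eq:jordan} and \eqref{eq:tartaglia}, recognize \eqref{eq:lagrange} as \eqref{eq:err.hpe} with $\lambda_k=1$ under the substitutions \eqref{eq:cauchy}, and collapse $r^{k+1}+\gamma b^{k+1}$ via \eqref{eq:abel}--\eqref{eq:galois} to $w^k-\rho_k\theta_{k+1}v^k$ with $\theta_{k+1}=\inner{w^k-\tilde z^k}{v^k}/\norm{v^k}^2$, which is \eqref{eq:err.hpe2}. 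All the substitutions and the key cancellation you identify are exactly those in the paper's argument (your sign bookkeeping is in fact cleaner than the paper's intermediate display, which contains a harmless sign typo), so no gap remains beyond writing out the algebra you have already outlined.
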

\FloatBarrier  
\begin{proof}
Fix any $k \geq 0$.
From \eqref{eq:cartano} and the definitions of $z^k$ and $w^k$
in \eqref{eq:cauchy} we have
\begin{align*}
 w^k& =\hat r^k+\gamma \hat b^k= r^k+\gamma b^k +\alpha_k [r^k+\gamma b^k - (r^{k-1}+\gamma b^{k-1})]\\
   & = z^k + \alpha_k(z^k-z^{k-1}),
\end{align*}
which is exactly \eqref{eq:ext.hpe}. Now note that the inclusion in
\eqref{eq:err.hpe} follows from the fact that $T:=S_{\gamma,A,B}$,
\eqref{eq:def_S}, \eqref{eq:tartaglia}, $b^{k,\ell(k)} \in B(s^{k,\ell(k)})$
from~\eqref{eq:jordan}, and the definitions of $v^k$ and $\tilde z^k$ in
\eqref{eq:cauchy}.

Further, \eqref{eq:cauchy} and \eqref{eq:lagrange} yield
\begin{align*}
 \norm{v^k+\tilde z^k-w^k}^2
 &= \norm{s^{k,\ell(k)}+\gamma b^{k,\ell(k)}-(\hat r^k+\gamma \hat b^k)}^2\\
 &\leq \sigma^2 \left(\norm{r^{k,\ell(k)}+\gamma b^{k,\ell(k)}
      -(\hat r^k+\gamma \hat b^k)}^2+\norm{s^{k,\ell}-r^{k,\ell}}^2\right)\\
  & = \sigma^2 \left(\norm{\tilde z^k-w^k}^2 + \norm{v^k}^2\right),
\end{align*}
which is exactly the inequality in  \eqref{eq:err.hpe} with $\lambda_k=1$.
%
Finally,
\begin{align*}
  z^{k+1} & = r^{k+1} + \gamma b^{k+1} && [\text{by~\eqref{eq:cauchy}}]\\
    & = r^{k,\ell(k)}
      +\gamma \left( \hat b^k-\dfrac{1}{\gamma}
         \left[(1-\rho_k\theta_{k+1})r^{k,\ell(k)}
                  +\rho_k\theta_{k+1}s^{k,\ell(k)}
                  -\hat r^{k}
          \right]\right)
      && [\text{by~\eqref{eq:galois}}]\\
    & = \hat{r}^k + \gamma \hat{b}^k +
      \rho_k \theta_{k+1} \big(r^{k,\ell(k)}-s^{k,\ell(k)}\big)
      \\            
    & = w^k + \rho_k \theta_{k+1} v^k 
       && [\text{by~\eqref{eq:cauchy}}]\\
                 & = w^{k}-\rho_k \dfrac{\inner{w^k-\tilde z^k}{v^k}}{\norm{v^k}^2}v^k,
      && [\text{by~\eqref{eq:cayley} and~\eqref{eq:cauchy}}]
\end{align*}
which establishes~\eqref{eq:err.hpe2} and thus completes the proof of the proposition.
\end{proof}

\noindent The following theorem states the asymptotic convergence properties
of Algorithm \ref{alg:ine_dr}, which are essentially direct consequences of
Proposition \ref{pr:2e1} and Theorem \ref{th:wc}.

\begin{theorem}[Convergence of Algorithm \ref{alg:ine_dr}]
\label{th:conv_dr}
Consider the sequences evolved by Algorithm~\ref{alg:ine_dr} with the
parameters $\alpha\in [0,1)$, $\overline{\rho}\in (0,2)$ and $\{\alpha_k\}$
satisfying the conditions~\eqref{eq:alpha} and~\eqref{eq:betatau} of
Theorem~\ref{th:wc}.  Then
\begin{itemize}
\item[\emph{(a)}] If the outer loop (over $k$) executes an infinite number of times, with each inner loop (over $\ell$) terminating in a finite number of iterations $\ell=\ell(k)$, then $\{s^k\}$ and $\{r^k\}$ both converge to some 
solution $x^*\in\R^n$ of \eqref{eq:mip_ab}, and $\{b^{k,\ell(k)}\}$ and
$\{b^k\}$ both converge to some $b^*\in B(x^*)$, with $\{a^{k,\ell(k)}\}$
converging to $-b^*\in A(x^*)$.
\item[\emph{(b)}]  If the outer loop executes only a finite number of times,
ending with $k=\bar k$, with the last invocation of the inner loop executing
an infinite number of times, then $\{s^{\bar k,\ell}\}_{\ell=1}^\infty$ and
$\{r^{\bar k,\ell}\}_{\ell=1}^\infty$ both converge to some solution
$x^*\in\R^n$ of \eqref{eq:mip_ab}, and $\{b^{\bar k,\ell}\}_{\ell=1}^\infty$ converges
to some $b^*\in B(x^*)$, with $\{a^{\bar k,\ell}\}_{\ell=1}^\infty$ converging to
$-b^*\in A(x^*)$.
\item[\emph{(c)}] If \emph{Algorithm \ref{alg:ine_dr}} stops with 
$s^{k,\ell}=r^{k,\ell}$, then $z^*:=s^{k,\ell}=r^{k,\ell}$ is a solution of \eqref{eq:mip_ab}.
\end{itemize}
\end{theorem}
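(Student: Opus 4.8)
The plan is to handle the three cases separately, in each reducing to facts already established for Algorithm~\ref{inertial.hpp} through the correspondence of Proposition~\ref{pr:2e1}, and then translating back to the Douglas--Rachford variables via the resolvent identities and the graph-closedness of the maximal monotone operators $A$, $B$ and $S_{\gamma,A,B}$.

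For part (a), I would first invoke Proposition~\ref{pr:2e1}: since every inner loop terminates, $\ell(k)$ is well defined for all $k$, and the sequences $z^k,w^k,\tilde z^k,v^k$ of~\eqref{eq:cauchy} satisfy~\eqref{eq:ext.hpe}--\eqref{eq:err.hpe2} with $T=S_{\gamma,A,B}$ and $\lambda_k\equiv 1$. Because $\inf_k\lambda_k=1>0$ and the parameters obey~\eqref{eq:alpha}--\eqref{eq:betatau}, Theorem~\ref{th:wc} produces a point $z^\infty$ with $0\in S_{\gamma,A,B}(z^\infty)$ such that $z^k\to z^\infty$, $\tilde z^k\to z^\infty$ and $v^k\to 0$. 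The translation back is the heart of the argument: since $b^{k,\ell(k)}\in B(s^{k,\ell(k)})$, the resolvent identity gives $s^{k,\ell(k)}=J_{\gamma B}(s^{k,\ell(k)}+\gamma b^{k,\ell(k)})=J_{\gamma B}(v^k+\tilde z^k)$, so nonexpansiveness of $J_{\gamma B}$ together with $v^k+\tilde z^k\to z^\infty$ forces $s^{k,\ell(k)}\to x^*:=J_{\gamma B}(z^\infty)$, which solves~\eqref{eq:mip_ab} by~\eqref{eq:eck_bert}. Using $r^{k,\ell(k)}=s^{k,\ell(k)}-v^k$, $\gamma b^{k,\ell(k)}=\tilde z^k-r^{k,\ell(k)}$, $a^{k,\ell(k)}=\gamma^{-1}v^k-b^{k,\ell(k)}$ from~\eqref{eq:tartaglia}, and recalling $s^{k+1}=s^{k,\ell(k)}$, $r^{k+1}=r^{k,\ell(k)}$ and $z^k=r^k+\gamma b^k$, I would conclude $s^k,r^k\to x^*$, $b^{k,\ell(k)},b^k\to b^*:=\gamma^{-1}(z^\infty-x^*)$ and $a^{k,\ell(k)}\to -b^*$; the memberships $b^*\in B(x^*)$ and $-b^*\in A(x^*)$ then follow from the closedness of the graphs of $B$ and $A$.

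For part (b), the outer loop is frozen at $k=\bar k$ and the $\mathcal{B}$-procedure runs indefinitely. By Definition~\ref{def:bp} the iterates $(s^{\bar k,\ell},b^{\bar k,\ell})$ converge, say to $(s^\infty,b^\infty)$ with $b^\infty\in B(s^\infty)$ and $s^\infty+\gamma b^\infty=\hat r^{\bar k}+\gamma\hat b^{\bar k}$; solving~\eqref{eq:tartaglia} exactly and using continuity of $J_{\gamma A}$ gives $r^{\bar k,\ell}\to r^\infty:=J_{\gamma A}(s^\infty-\gamma b^\infty)$ and a limit $a^\infty\in A(r^\infty)$ with $a^\infty=\gamma^{-1}(s^\infty-r^\infty)-b^\infty$. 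The decisive step uses that~\eqref{eq:lagrange} never holds: were the limiting right-hand side $\sigma^2\big(\norm{r^\infty+\gamma b^\infty-(\hat r^{\bar k}+\gamma\hat b^{\bar k})}^2+\norm{s^\infty-r^\infty}^2\big)$ strictly positive, then, since its left-hand side $\norm{s^{\bar k,\ell}+\gamma b^{\bar k,\ell}-(\hat r^{\bar k}+\gamma\hat b^{\bar k})}^2\to 0$ by the $\mathcal{B}$-procedure, the test would eventually be satisfied, contradicting the infinite execution of the inner loop; hence (for $\sigma>0$) $s^\infty=r^\infty=:x^*$ and $r^\infty+\gamma b^\infty=\hat r^{\bar k}+\gamma\hat b^{\bar k}$. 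Writing $\tilde z^{\bar k,\ell}=r^{\bar k,\ell}+\gamma b^{\bar k,\ell}$ and $v^{\bar k,\ell}=s^{\bar k,\ell}-r^{\bar k,\ell}$, one has $v^{\bar k,\ell}\in S_{\gamma,A,B}(\tilde z^{\bar k,\ell})$ by~\eqref{eq:def_S}, and passing to the limit with graph-closedness of $S_{\gamma,A,B}$ yields $0\in S_{\gamma,A,B}(\hat r^{\bar k}+\gamma\hat b^{\bar k})$; then $x^*=J_{\gamma B}(\hat r^{\bar k}+\gamma\hat b^{\bar k})$ solves~\eqref{eq:mip_ab} by~\eqref{eq:eck_bert}, and $b^\infty\in B(x^*)$, $a^\infty=-b^\infty\in A(x^*)$ follow as before.

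For part (c), if the method halts with $s^{k,\ell}=r^{k,\ell}=:z^*$, then~\eqref{eq:tartaglia} collapses to $\gamma a^{k,\ell}=-\gamma b^{k,\ell}$, i.e. $a^{k,\ell}=-b^{k,\ell}$, while $b^{k,\ell}\in B(z^*)$ and $a^{k,\ell}\in A(z^*)$; hence $0=a^{k,\ell}+b^{k,\ell}\in A(z^*)+B(z^*)$, so $z^*$ solves~\eqref{eq:mip_ab}. I expect the main obstacle to lie in part (b): one must carefully extract convergence of the never-accepted inner iterates from Definition~\ref{def:bp} and combine it with the perpetually violated test~\eqref{eq:lagrange} to pin down a single common limit $x^*$ for $\{s^{\bar k,\ell}\}$ and $\{r^{\bar k,\ell}\}$, whereas parts (a) and (c) are essentially direct consequences of Theorem~\ref{th:wc} and of the definition~\eqref{eq:def_S}, respectively.
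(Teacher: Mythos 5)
Your proposal is correct and takes essentially the same route as the paper's own proof: Proposition~\ref{pr:2e1} plus Theorem~\ref{th:wc} and continuity of $J_{\gamma B}$ for part (a), the $\mathcal{B}$-procedure limits combined with the perpetually violated test~\eqref{eq:lagrange} and Lemma~\ref{lm:simons} for part (b), and direct algebra from~\eqref{eq:jordan} and~\eqref{eq:tartaglia} for part (c). The only differences are cosmetic: in (b) you close the argument through graph-closedness of $S_{\gamma,A,B}$ and~\eqref{eq:eck_bert}, whereas the paper simply assembles $b^*\in B(x^*)$ and $a^*=-b^*\in A(x^*)$ to get $0\in (A+B)(x^*)$ directly, and your explicit caveat that $\sigma>0$ is needed to extract $s^{\bar k,\ell}-r^{\bar k,\ell}\to 0$ is a point the paper's own proof silently glosses over.
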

\begin{proof}
\noindent
(a) For each $k\geq 0$, again let $\ell=\ell(k)$ be the index of inner
 iteration that first meets the inner-loop termination condition. Using
 Proposition \ref{pr:2e1}, \eqref{eq:abel}, the descriptions of
 algorithms~\ref{inertial.hpp} and~\ref{alg:ine_dr}, and Theorem \ref{th:wc},
 we conclude that there exists $z^*\in \R^n$ such that $0\in S_{\gamma, A,
 B}(z^*)$ and
\begin{align}
\label{eq:lanczos}
 z^k &= r^k+\gamma b^k\to z^* & 
 \tilde z^{k-1} &= r^{k}+\gamma b^{(k-1),\ell(k-1)}\to z^* &
 v^{k-1} &=s^{k}-r^{k}\to 0. 
\end{align}
From $0\in S_{\gamma, A, B}(z^*)$ and \eqref{eq:eck_bert} we obtain that
$x^*:=J_{\gamma B}(z^*)$ is a solution of \eqref{eq:mip_ab}. Moreover, it
follows from \eqref{eq:lanczos}, the inclusion in \eqref{eq:jordan},
\eqref{eq:abel}, and the continuity of $J_{\gamma B}$ that
\begin{align}
 \label{eq:hestenes02}
 s^{k}+\gamma b^{(k-1),\ell} &= v^{k-1} + \tilde z^{(k-1)} \to 0+z^*=z^* &
s^{k} &= J_{\gamma B}( s^{k}+\gamma b^{(k-1),\ell})\to 
 J_{\gamma B}(z^*) = x^*. 
\end{align}
We also have $r^k\to x^*$ since, from \eqref{eq:lanczos}, $s^k-r^k\to 0$.
Altogether, we have that $x^*$ is a solution of \eqref{eq:mip_ab} and
$\{s^k\}$ and $\{r^k\}$ both converge to $x^*$.
From \eqref{eq:hestenes02} we now have
\begin{align}
 \label{eq:hestenes}
 b^{k,\ell(k)}= \gamma^{-1}(s^{k+1}+\gamma b^{k,\ell(k)}-s^{k+1})
 \to \gamma^{-1}(z^*-x^*):=b^*.
\end{align}
From $x^*=J_{\gamma B}(z^*)$ we then obtain $b^*\in B(x^*)$. 
On the other hand, using the equation in \eqref{eq:tartaglia}, 
\eqref{eq:abel}, \eqref{eq:lanczos} and \eqref{eq:hestenes}
we find
\begin{align*}
 a^{k,\ell(k)}=\gamma^{-1}(s^{k+1}-r^{k+1}) - b^{k,\ell(k)}\to 0-b^*= -b^*.
\end{align*}
Using the above convergence result, that $r^{k,\ell(k)}=r^{k+1}\to x^*$,  the
inclusion in \eqref{eq:tartaglia}, and Lemma \ref{lm:simons}, we obtain that
$-b^*\in A(x^*)$. Finally, $b^k=\gamma^{-1}(z^k-r^k)\to
\gamma^{-1}(z^*-r^*)=b^*$.

\mgap

(b) First note that using \eqref{eq:ferreiro} we obtain $(s^{\bar
k,\ell},b^{\bar k,\ell}) = \mathcal{B}(\hat r^{\bar k}, \hat b^{\bar k},
\gamma, \hat s^{\bar k}, \hat b^{\bar k},\ell)$, which in view of Definition
\ref{def:bp} yields $(s^{\bar k,\ell}, b^{\bar k,\ell})\in B$, for all
$\ell\geq 1$, $s^{\bar k, \ell}+\gamma b^{\bar k, \ell}\to \hat r^{\bar
k}+\gamma \hat b^{\bar k}$, $s^{\bar k,\ell}\to x^*$, and $b^{\bar k, \ell}\to
b^*$, for some $x^*, b^*\in \R^n$.
Combining limits, we obtain that $\hat r^{\bar k}+\gamma \hat
 b^{\bar k}=x^*+\gamma b^*$. From Lemma \ref{lm:simons}, we also have $b^*\in
 B(x^*)$.
Now combining the limits with \eqref{eq:tartaglia} and the continuity of
$J_{\gamma A}$, we also find
\begin{align*}
 r^{\bar k,\ell} = J_{\gamma A} (s^{\bar k,\ell}-\gamma b^{\bar k, \ell})\to J_{\gamma A}(x^*-\gamma b^*)=:r^*
\end{align*}
and so
\begin{align}
  \label{eq:banach}
 a^{\bar k, \ell}=\gamma^{-1}\left(s^{\bar k,\ell}-r^{\bar k,\ell}\right) - b^{\bar k, \ell}\to
 \gamma^{-1}(x^*-r^*)-b^*=:a^*.
\end{align}
From the inclusion in \eqref{eq:tartaglia} and (again) Lemma \ref{lm:simons}
we obtain that $a^*\in A(r^*)$.  
On the other hand, using \eqref{eq:lagrange} and the hypothesis that the inner
loop executes an infinite number of times at iteration $k=\bar k$, we obtain,
for all $\ell \geq 1$, that 
\begin{align}
  \norm{s^{\bar k,\ell}+\gamma b^{\bar k,\ell}-(\hat r^{\bar k}+\gamma \hat b^{\bar k})}^2> \sigma^2\left( \norm{r^{\bar k,\ell}+\gamma b^{\bar k,\ell}-(\hat r^{\bar k}+\gamma \hat b^{\bar k})}^2 + \norm{s^{\bar k,\ell}-r^{\bar k,\ell}}^2\right).
\end{align}
Since the left-hand side of the above inequality converges to zero and the
right-hand side is nonnegative, the right-hand side also converges to zero and
in particular $s^{\bar k,\ell}-r^{\bar k,\ell}\to 0$. Since $s^{\bar
k,\ell}\to x^*$ and $r^{\bar k,\ell}\to r^*$, we conclude that $x^*=r^*$ and,
hence, from \eqref{eq:banach}, that $a^*=-b^*$.

\mgap

(c) If $s^{k,\ell}=r^{k,\ell}=:z^*$, then it follows from the inclusion in \eqref{eq:jordan} and \eqref{eq:tartaglia} that
$0=\gamma^{-1}(s^{k,\ell}-r^{k,\ell})= a^{k,\ell}+b^{k,\ell}\in A(r^{k,\ell})+B(s^{k,\ell})=(A+B)(z^*)$. 
%
%
\end{proof}

\section{A partially inexact relative-error inertial-relaxed \\ ADMM}
 \label{sec:admm_iner}

We now consider the convex optimization problem
\begin{align}
 \label{eq:opt}
  \min_{z\in \R^n}\left\{ f(x) + g(x)\right\}
\end{align}
where $f, g: \R^n\to (-\infty,\infty]$ are proper, convex and lower semicontinuous functions for which
$(\partial f+\partial g)^{-1}(0)\neq\emptyset$.

The alternating direction method of multipliers (ADMM)
\cite{gag.mer-dua.cma76, glo.mar-sur.fnm75} is a first-order algorithm for
solving \eqref{eq:opt} which has become popular over the last decade largely
due to its wide range of applications in data science (see, \emph{e.g.},
\cite{boy.par.chu-dis.ftml11}).
As applied to~\eqref{eq:opt}, one iteration of the ADMM may be described as:
\begin{align}
 \label{eq:spf}
& x^{k+1}\in \argmin_{x\in \R^n}\,\left\{f(x)+\inner{p^k}{x}+\dfrac{c}{2}\norm{x-z^k}^2\right\},\\
 \label{eq:spg}
& z^{k+1}\in \argmin_{z\in \R^n}\,\left\{g(z)-\inner{p^k}{z}+\dfrac{c}{2}\norm{x^{k+1}-z}^2\right\},\\
& p^{k+1} = p^k +c(x^{k+1}-z^{k+1}).
\end{align}

In many applications, the function $g$ is such that \eqref{eq:spg} has a
closed-form or otherwise straightforward solution (\emph{e.g.},
$g(\cdot)=\|\cdot\|_1$).  We consider situations in which this is the case,
but solving~\eqref{eq:spg} is more difficult and requires some form of
iterative process.
%
Eckstein and Yao~\cite[Section 6]{eck.yao-rel.mp17} proposed and studied the
asymptotic convergence of an inexact version of the ADMM tailored to such
situations:  at each iteration, \eqref{eq:spf} may be approximately solved
within a relative-error tolerance. This method is a special version of their
inexact relative-error Douglas-Rachford (DR) algorithm mentioned in Section
\ref{sec:dr}, as applied to the monotone inclusion problem
\begin{align}
 \label{eq:mip_fg}
 0 \in \partial f(x) + \partial g(x)
\end{align}
which is, in particular, a special case of \eqref{eq:mip_ab} with $A=\partial
f$ and $B=\partial g$. Problem \eqref{eq:mip_fg} is, under standard
qualification conditions, equivalent to \eqref{eq:opt}. Recall that we are
assuming $(\partial f+\partial g)^{-1}(0)\neq\emptyset$, \emph{i.e.}, that
\eqref{eq:mip_fg} admits at least one solution.

In this section, we propose and study the asymptotic behaviour of a
(partially) inexact relative-error \emph{inertial-relaxed} ADMM algorithm for
solving \eqref{eq:opt}. The proposed method, namely Algorithm
\ref{admm_inertial}, is a special version of Algorithm \ref{alg:ine_dr} when
applied to solving \eqref{eq:mip_fg} and may be viewed as an alternative to
the Eckstein-Yao approximate ADMM \cite{eck.yao-rel.mp17} that incorporates
inertial and relaxation effects to accelerate convergence.

To formalize the inexact solution process for the subproblems \eqref{eq:spf},
we introduce the notion of an $\mathcal{F}$-procedure~\cite{eck.yao-rel.mp17}.
First, we note that any instance of \eqref{eq:spf} can be posed slightly more
abstractly as
\begin{align}
   \label{eq:jordan03}
  \min_{x\in \R^n}\left\lbrace f(x)+\inner{p}{x}+\dfrac{c}{2}\norm{x - z}^2\right\rbrace
\end{align}
where $p,z\in \R^n$ and $c>0$.

\mgap

\begin{definition} [$\mathcal{F}$-procedure for solving \eqref{eq:jordan03}]
 \label{def:fp} 
 A $\mathcal{F}$--procedure
for (approximately) solving any instance of \eqref{eq:jordan03}  is a mapping 
$\mathcal{F} = (\mathcal{F}_1, \mathcal{F}_2):\R^n \times \R^n\times \R_{++}\times \R^n\times \N^*\to \R^n\times \R^n$ such that
if one lets $(x^\ell, y^\ell)=\mathcal{F}(p,z,c,\bar x,\ell)$ for all $\ell \in \N$ and any given $p,z, \bar x\in \R^n$ and $c>0$, then 
\begin{align}
  \label{eq:jordan04}
 \lim_{\ell\to \infty}\,y^\ell = 0\;\;\mbox{and}\;\; (\forall \ell \in \N)\;\; y^\ell\in \partial_x\left[f(x)+\inner{p}{x}+\dfrac{c}{2}\norm{x-z}^2\right]_{x=x^\ell}.
\end{align}
\end{definition}

\noindent Quoting \cite[Assumption 2]{eck.yao-rel.mp17}, ``the idea behind
this definition is that $\mathcal{F}(p,z,c,\bar x,\ell)$ is the
$\ell^{\text{th}}$ iterate produced by the $x$-subproblem solution procedure
with penalty parameter $c$, the Lagrange multiplier estimate $p^k$ equal to
$p$, and $z^k=z$, starting from the solution estimate $\bar x$''.
For the remainder of this section, we assume the following.

\begin{assumption} 
 \label{asp:fp}
There exists a $\mathcal{F}$--procedure (according to \emph{Definition
\ref{def:fp}}) for approximately solving any instance of \eqref{eq:jordan03}.
\end{assumption}

\noindent The next lemma shows that the $\mathcal{F}$-procedure is essentially
a form of $\mathcal{B}$--procedure (see Definition~\ref{def:bp}). Although the
proof essentially duplicates analysis
in~\cite{eck.yao-app.coap17,eck.yao-rel.mp17}, it is not presented as a
separate result there.  Therefore we include the proof in the interest of
rigor and completeness.

\begin{lemma}
 Let $\mathcal{F}(\cdot) = (\mathcal{F}_1 (\cdot), \mathcal{F}_2(\cdot))$ be a $\mathcal{F}$--procedure for solving \eqref{eq:jordan03}, where $\mathcal{F}_i: \R^n \times \R^n\times \R_{++}\times \R^n\times \N^*\to \R^n$, for $i=1,2$, and define
$\mathcal{B}:\R^n\times \R^n\times \R_{++}\times \R^n\times \R^n\times \N^*\to \R^n\times \R^n$ by
\begin{align}
 \label{eq:def.bf}
\mathcal{B}(r, b,\gamma,\bar s, \bar b, \ell) = \mathcal{F} (-b, r,\gamma^{-1}, \bar s, \ell) +
\left(0, \,b - \gamma^{-1}(\mathcal{F}_1(-b, r,\gamma^{-1}, \bar s, \ell)-r)\right).
\end{align}
Then, $\mathcal{B}$  is a $\mathcal{B}$--procedure (see \emph{Definition \ref{def:bp}}) for approximately solving \eqref{eq:jordan10} in which $s:=x$,  $B:=\partial f$, $\gamma=c^{-1}$,  $r:=z$ and $b:=-p$.
\end{lemma}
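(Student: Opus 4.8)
The plan is to unpack the definition of $\mathcal{B}$ in~\eqref{eq:def.bf}, verify the three requirements of Definition~\ref{def:bp} one at a time, and isolate convergence of the primal iterates as the only point requiring more than algebra. First I would introduce the shorthand $(x^\ell,y^\ell):=\mathcal{F}(-b,r,\gamma^{-1},\bar s,\ell)$, so that $\mathcal{F}_1(-b,r,\gamma^{-1},\bar s,\ell)=x^\ell$ and, by~\eqref{eq:def.bf}, the pair produced by $\mathcal{B}$ is $(s^\ell,b^\ell)=(x^\ell,\,y^\ell+b-\gamma^{-1}(x^\ell-r))$. Under the stated identifications $p=-b$, $z=r$, $c=\gamma^{-1}$, the inclusion in~\eqref{eq:jordan04} becomes $y^\ell\in\partial f(x^\ell)-b+\gamma^{-1}(x^\ell-r)$, where the sum rule for subdifferentials applies without qualification since the linear and quadratic terms are everywhere finite and smooth. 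Rearranging gives exactly $b^\ell=y^\ell+b-\gamma^{-1}(x^\ell-r)\in\partial f(x^\ell)=B(s^\ell)$, which is the first requirement of Definition~\ref{def:bp}.

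For the residual condition I would compute directly $s^\ell+\gamma b^\ell=x^\ell+\gamma y^\ell+\gamma b-(x^\ell-r)=r+\gamma b+\gamma y^\ell$; since $y^\ell\to 0$ by Definition~\ref{def:fp}, this yields $s^\ell+\gamma b^\ell\to r+\gamma b$, the third requirement. Note that the construction in~\eqref{eq:def.bf} is engineered so that the $x^\ell$ terms cancel in this identity, leaving only the vanishing residual $\gamma y^\ell$.

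The main obstacle, and the only step not reducible to algebra, is convergence of the sequence $\{(s^\ell,b^\ell)\}$, because the $\mathcal{F}$-procedure guarantees only $y^\ell\to 0$ and asserts nothing directly about the iterates $x^\ell$. Here I would exploit strong convexity: with $c=\gamma^{-1}>0$, the objective in~\eqref{eq:jordan03} is a $\gamma^{-1}$-strongly convex proper lower semicontinuous function, hence coercive, so it attains its minimum at a unique point $x^*$ satisfying $0\in\partial f(x^*)-b+\gamma^{-1}(x^*-r)$. Applying the strong-monotonicity inequality to the subgradient pairs $(x^\ell,y^\ell)$ and $(x^*,0)$ gives $\inner{y^\ell}{x^\ell-x^*}\geq\gamma^{-1}\norm{x^\ell-x^*}^2$, whence by Cauchy--Schwarz $\norm{x^\ell-x^*}\leq\gamma\norm{y^\ell}\to 0$. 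Therefore $s^\ell=x^\ell\to x^*$, and the formula for $b^\ell$ forces $b^\ell\to b-\gamma^{-1}(x^*-r)$, so $\{(s^\ell,b^\ell)\}$ converges. Together with the inclusion and residual properties established above, this verifies every clause of Definition~\ref{def:bp} and completes the proof.
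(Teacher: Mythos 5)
Your proof is correct, and its overall skeleton coincides with the paper's: both unpack \eqref{eq:def.bf} to identify $y^\ell = b^\ell - b + \gamma^{-1}(s^\ell - r)$ as the $\mathcal{F}$-procedure residual, both read off the inclusion $b^\ell \in \partial f(s^\ell)$ from \eqref{eq:jordan04} via the sum rule, and both observe that the $x^\ell$ terms cancel to give $s^\ell + \gamma b^\ell = r + \gamma(y^\ell + b) \to r + \gamma b$. Where you genuinely diverge is the one nontrivial step, the convergence of $\{s^\ell\}$. The paper rewrites the inclusion as a resolvent identity $s^\ell = (\gamma \partial f + I)^{-1}\big(r + \gamma(y^\ell + b)\big)$ and invokes the continuity (nonexpansiveness) of $J_{\gamma \partial f}$; for the limit point $J_{\gamma\partial f}(r+\gamma b)$ to exist this implicitly uses that $\partial f$ is maximal monotone, so that the resolvent is everywhere defined and single-valued. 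You instead work with the strongly convex subproblem objective: existence and uniqueness of its minimizer $x^*$ follow from coercivity and Fermat's rule, and the $\gamma^{-1}$-strong monotonicity of its subdifferential, applied to the pairs $(x^\ell, y^\ell)$ and $(x^*, 0)$, yields the explicit estimate $\norm{x^\ell - x^*} \leq \gamma\norm{y^\ell}$. The two arguments encode the same quantitative fact --- nonexpansiveness of the resolvent is precisely this monotonicity estimate --- but yours is slightly more self-contained (it needs only Weierstrass, Fermat, and the Moreau--Rockafellar sum rule rather than maximality of $\partial f$ via Rockafellar/Minty), and it makes the rate at which $s^\ell$ approaches its limit explicit in terms of $\norm{y^\ell}$, which the paper's continuity argument leaves implicit.
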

\begin{proof}
Assume that $(s^\ell, b^\ell)=\mathcal{B}(r, b,\gamma,\bar s, \bar b, \ell)$ for some
$r, b, \bar s, \bar b\in \R^n$, $\gamma>0$ and all $\ell \in \N^*$.
In view of \eqref{eq:def.bf} and the fact that $\mathcal{F}=(\mathcal{F}_1, \mathcal{F}_2)$ we have
\[
 (s^\ell, b^\ell) = \left(\mathcal{F}_1 (-b, r,\gamma^{-1}, \bar s, \ell), \mathcal{F}_2 (-b, r,\gamma^{-1}, \bar s, \ell)\right) +
\left(0, \,b - \gamma^{-1}(\mathcal{F}_1(-b, r,\gamma^{-1}, \bar s, \ell)-r)\right)
\]
and so, for all $\ell \in \N^*$,
\[
  (s^\ell, b^\ell - b + \gamma^{-1}(s^\ell-r)) = 
  \left(\mathcal{F}_1 (-b, r,\gamma^{-1}, \bar s, \ell), \mathcal{F}_2 (-b, r,\gamma^{-1}, \bar s, \ell) \right) =
 \mathcal{F} (-b, r,\gamma^{-1}, \bar s, \ell).
\]
Using the latter identity and the fact that $\mathcal{F}(\cdot)$ is a $\mathcal{F}$--procedure (see Definition \ref{def:fp}) we obtain 
%
\[
\lim_{\ell\to \infty}\, (\underbrace{b^\ell - b + \gamma^{-1}(s^\ell-r)}_{=:y^\ell})= 0\;\;\mbox{and}\;\; (\forall \ell \in \N)\;\; 
y^\ell \in 
\partial_x\left[f(x)-\inner{b}{x}+\dfrac{1}{2\gamma}\norm{x-r}^2\right]_{x=s^\ell}
\]
which, in particular, after some computations,  yields $(s^\ell, b^\ell)\in G(\partial f)$, i.e., $b^\ell \in \partial f(s^\ell)$ for all $\ell \in \N^*$. Using this fact and the definition of $y^\ell$ we find $s^\ell = (\gamma \partial f + I)^{-1}(r+\gamma(y^\ell+b))$, which in turn combined with the fact
that $\lim_{\ell \to \infty}\,y^\ell=0$ and the continuity of $J_{\gamma \partial f} := (\gamma \partial f + I)^{-1}$ implies that
$s^\ell \to J_{\gamma \partial f} (r+\gamma b)$. On the other hand, using the definition of $y^\ell$ (again) we 
also obtain $\gamma b^\ell + s^\ell = \gamma (y^\ell + b)+r$, which gives that $\{b^\ell\}$ is convergent and 
$\gamma b^\ell + s^\ell \to r+\gamma b$. Altogether, we proved that $(s^\ell, b^\ell)\in \partial f$, for all
$\ell\in \N^*$, that the sequence $\{(s^\ell, b^\ell)\}$ is convergent and  $s^\ell+\gamma b^\ell \to r+\gamma b$, which finishes the proof.
\end{proof}

Our inertial-relaxed inexact ADMM for solving~\eqref{eq:opt} is presented as
Algorithm~\ref{admm_inertial}.  
Before establishing its convergence, we make the following
remarks regarding this algorithm:

\begin{algfloat}[p!]
\noindent
\addtolength{\linewidth}{-2\fboxsep}%
\addtolength{\linewidth}{-2\fboxrule}%
\fbox{
\begin{minipage}{0.96\linewidth}
\begin{algorithm}
\label{admm_inertial}
{\bf Partially inexact relative-error inertial-relaxed ADMM for  \eqref{eq:opt}}
\end{algorithm}
\noindent Choose $c>0$, $0\leq \alpha,\sigma<1$ and $0<\underline{\rho}<\overline{\rho}<2$. \\
Initialize $(x^0,z^0,p^0)=(x^{-1},z^{-1},p^{-1})\in (\R^n)^3$. 

\mgap

\noindent {\bf for}\, $k=0,1,2,\dots$\,{\bf do}
\begin{enumerate}
\item[] Choose $\alpha_k\in [0,\alpha]$ and define 
  \begin{align}
    \label{eq:cartano02}
   (\hat x^k,\hat z^k,\hat p^k)=(x^k,z^k,p^k)+\alpha_k[(x^k,z^k,p^k)-(x^{k-1},z^{k-1},p^{k-1})]
   \end{align}
\item[] {\bf repeat}\,$\lbrace \mbox{{\bf for}}\,\ell=1,2,\dots\rbrace$
\begin{enumerate}
\item[] Improve the solution 
\begin{align}
\label{eq:jordan02}
x^{k+1}\approx \argmin_{x\in \R^n}\left\lbrace f(x)+\inner{\hat p^k}{x}+\dfrac{c}{2}\norm{x-\hat z^k}^2\right\rbrace
\end{align}
\item[] by setting 
\begin{align}
 \label{eq:ferreiro02}
(x^{k,\ell},y^{k,\ell})=\mathcal{F}(\hat p^k,\hat z^k,c,\hat x^k,\ell) 
\end{align}
(thus incrementally executing a step of the $\mathcal{F}$--procedure)
\item[] Define 
 \begin{align}
  \label{eq:paques}
   p^{k,\ell}=\hat p^k+c(x^{k,\ell}-\hat z^k)-y^{k,\ell}
 \end{align}
\item[] Exactly find
\begin{align}
 \label{eq:tartaglia02}
  z^{k,\ell} = \argmin_{z\in
  \R^n}\,\left\lbrace\,g(z)-\inner{p^{k,\ell}}{z}
     +\dfrac{c}{2}\norm{x^{k,\ell}-z}^2
  \right\rbrace
\end{align}
\end{enumerate}
\noindent {\bf until}  
\begin{align}
 \label{eq:lagrange02}
 \norm{y^{k,\ell}}^2\leq \sigma^2\left( \norm{p^{k,\ell}-\hat p^k-c(z^{k,\ell}-\hat z^k)}^2+ c^2\norm{x^{k,\ell}-z^{k,\ell}}^2\right)
\end{align}
\noindent {\bf if} 
$x^{k,\ell}=z^{k,\ell}$ then {\bf stop} \\
\noindent {\bf otherwise}, choose $\rho_k\in
[\,\underline{\rho},\overline{\rho}\,]$ and set
\begin{align}
  \label{eq:abel02}
 &x^{k+1}=x^{k,\ell},\quad z^{k+1}=z^{k,\ell}\\
 \label{eq:cayley02}
&\theta_{k+1}=\dfrac{\inner{c(\hat z^k-z^{k,\ell})-(\hat p^k-p^{k,\ell})}{x^{k,\ell}-z^{k,\ell}}}{c\norm{x^{k,\ell}-z^{k,\ell}}^2}\\
 \label{eq:galois02}
& p^{k+1}=\hat p^k+c\left[(1-\rho_k\,\theta_{k+1})z^{k+1}+\rho_k\,\theta_{k+1} x^{k+1}-\hat z^k\right]
\end{align}
\end{enumerate}
\vspace{-3ex}
\noindent
{\bf end for}
\mgap
\end{minipage}
} 
\end{algfloat}

\begin{itemize}
 \item[(i)] Similarly to Algorithm \ref{alg:ine_dr}, Algorithm \ref{admm_inertial} benefits from inertial and relaxation 
effects --- see \eqref{eq:cartano02} and \eqref{eq:galois02} --- as well as
from the relative error criterion \eqref{eq:lagrange02} allowing inexact
solution of the $f$-subproblem~\eqref{eq:jordan02}.
\item[(ii)] Algorithm \ref{admm_inertial} can be viewed as an inertial-relaxed
version of Algorithm 4 in \cite{eck.yao-rel.mp17}, but we emphasize that 
even without inertia or relaxation (that is, when $\alpha=0$ and
$\rho_k\equiv 1$) it differs from the latter algorithm since Algorithm 4 is
based on an approximate proximal point algorithm using an extragradient
``corrector'' step, while Algorithm \ref{admm_inertial} is instead based
indirectly on Algorithm~\ref{inertial.hpp}, an approximate proximal point
method using projective corrector steps.  In developing Algorithm
\ref{admm_inertial}, we also experimented with using extragradient correction,
but obtained better numerical performance from projective correction.

\item[(iii)]  The derivation of Algorithm~\ref{admm_inertial} mirrors that
in~\cite{eck.yao-rel.mp17}, except that the underlying convergence ``engine''
from~\cite{sol.sva-hyb.jca99} is replaced by Algorithm~\ref{inertial.hpp}.  It
should be noted that~\cite{eck.yao-app.coap17} provides a different way of
deriving approximate ADMM algorithms.  This approach results in different
approximate forms of the ADMM, allowing for both relative and absolute error
criteria, both of a practically verifiable form.  It is also possible that the
work in~\cite{sva-ful.preprint18} could lead to still more approximate forms
of the ADMM.
\end{itemize}



\begin{proposition}
 \label{pr:admm_e_dr}
For any given execution of Algorithm~\ref{admm_inertial}, define
%
\begin{align}
 \label{eq:broyden}
(s^k,  b^k , r^k)  &:= (x^k , - p^k , z^k) &  
(\hat s^k,  \hat b^k , \hat r^k)  &:= (\hat x^k , - \hat p^k , \hat z^k) \\
%
 \label{eq:broyden02}
(s^{k,\ell},  b^{k,\ell} , r^{k,\ell})  &:= (x^{k,\ell} , - p^{k,\ell} , z^{k,\ell}) &
 a^{k,\ell} &:= c(s^{k,\ell}-r^{k,\ell})-b^{k,\ell}
\end{align}
for all applicable $k$ and $l$.  Then these sequences conform to the recursions
\eqref{eq:cartano}-\eqref{eq:galois} in Algorithm \ref{alg:ine_dr} with
$\gamma = 1/c$, the $\mathcal{B}$-procedure \eqref{eq:def.bf}, and the maximal
monotone operators $A=\partial g$ and $B=\partial f$.
\end{proposition}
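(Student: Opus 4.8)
The plan is to verify that, under the change of variables \eqref{eq:broyden}--\eqref{eq:broyden02} together with $\gamma=1/c$, $A=\partial g$ and $B=\partial f$, each recursion of Algorithm~\ref{alg:ine_dr} becomes the corresponding recursion of Algorithm~\ref{admm_inertial}; the proof is therefore a line-by-line identification rather than a genuinely new argument. I would begin with the two steps that are immediate. Since \eqref{eq:broyden} applies the same affine combination coordinatewise and the sign flip $b=-p$ is preserved by the extrapolation, the inertial update \eqref{eq:cartano02} is literally \eqref{eq:cartano}; likewise the assignments $x^{k+1}=x^{k,\ell}$ and $z^{k+1}=z^{k,\ell}$ in \eqref{eq:abel02} are exactly \eqref{eq:abel}.

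Next I would treat the $\mathcal{B}$-procedure step, where most of the content lies. Invoking the preceding lemma with $r=\hat r^k=\hat z^k$, $b=\hat b^k=-\hat p^k$, $\gamma^{-1}=c$ and $\bar s=\hat s^k=\hat x^k$, the embedded call $\mathcal{F}(-b,r,\gamma^{-1},\bar s,\ell)$ in \eqref{eq:def.bf} coincides with the call $\mathcal{F}(\hat p^k,\hat z^k,c,\hat x^k,\ell)$ of \eqref{eq:ferreiro02}, so that $\mathcal{F}_1=x^{k,\ell}$ and $\mathcal{F}_2=y^{k,\ell}$. The first component of \eqref{eq:def.bf} then gives $s^{k,\ell}=x^{k,\ell}$, and its second component, combined with the defining relation \eqref{eq:paques} for $p^{k,\ell}$, yields $b^{k,\ell}=y^{k,\ell}-\hat p^k-c(x^{k,\ell}-\hat z^k)=-p^{k,\ell}$, matching \eqref{eq:broyden02}. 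Thus \eqref{eq:ferreiro} holds, and the lemma simultaneously certifies the inclusion $b^{k,\ell}\in\partial f(s^{k,\ell})=B(s^{k,\ell})$ required in \eqref{eq:jordan}. Keeping the signs and argument order straight through \eqref{eq:def.bf} is the main obstacle; once this identification is fixed, everything else is routine algebra.

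For the $A$-step, I would read off the optimality condition of the exactly solved subproblem \eqref{eq:tartaglia02}, namely $p^{k,\ell}+c(x^{k,\ell}-z^{k,\ell})\in\partial g(z^{k,\ell})$; since $a^{k,\ell}:=c(s^{k,\ell}-r^{k,\ell})-b^{k,\ell}=p^{k,\ell}+c(x^{k,\ell}-z^{k,\ell})$, this reads precisely $a^{k,\ell}\in A(r^{k,\ell})$, while the affine identity $r^{k,\ell}+\gamma a^{k,\ell}=s^{k,\ell}-\gamma b^{k,\ell}$ of \eqref{eq:tartaglia} holds by construction of $a^{k,\ell}$. Finally I would translate the stopping test: substituting \eqref{eq:paques} collapses $s^{k,\ell}+\gamma b^{k,\ell}-(\hat r^k+\gamma\hat b^k)$ to $c^{-1}y^{k,\ell}$, so the left-hand side of \eqref{eq:lagrange} equals $c^{-2}\norm{y^{k,\ell}}^2$, and the two bracketed terms become $c^{-2}\norm{p^{k,\ell}-\hat p^k-c(z^{k,\ell}-\hat z^k)}^2$ and $\norm{x^{k,\ell}-z^{k,\ell}}^2$; clearing the factor $c^{-2}$ shows that \eqref{eq:lagrange} is equivalent to \eqref{eq:lagrange02}. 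The same substitutions send the scalar \eqref{eq:cayley02} to \eqref{eq:cayley} (a factor $1/c$ simply migrates between numerator and denominator) and, after multiplying through by $-1$, send \eqref{eq:galois02} to \eqref{eq:galois}, completing the term-by-term identification.
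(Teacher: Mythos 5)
Your proof is correct and follows essentially the same route as the paper's: a term-by-term identification of the recursions of Algorithm~\ref{admm_inertial} with those of Algorithm~\ref{alg:ine_dr} under the sign-flipped change of variables, using the $\mathcal{F}$-to-$\mathcal{B}$ lemma \eqref{eq:def.bf} for the inexact step, the optimality condition of \eqref{eq:tartaglia02} for the $A$-step, and the substitution of \eqref{eq:paques} to match the error criteria and correction formulas. In fact you spell out the verification of \eqref{eq:cayley}--\eqref{eq:galois} in more detail than the paper, which dismisses them with ``similar reasoning.''
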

\begin{proof}
In view of \eqref{eq:broyden} and \eqref{eq:cartano02} we have
\begin{align*}
 (\hat s^k,\hat b^k,\hat r^k) = (\hat x^k , - \hat p^k , \hat z^k) 
           & = \left(x^{k}+\alpha_k (x^k-x^{k-1}), - p^{k}-\alpha_k (p^k-p^{k-1}), z^{k}+\alpha_k (z^k-z^{k-1})\right)\\
 &=(s^k,b^k,r^k)+\alpha_k \left[(s^k,b^k,r^k)-(s^{k-1},b^{k-1},r^{k-1})\right],
\end{align*}
which is identical to~\eqref{eq:cartano} in Algorithm \ref{alg:ine_dr}.
Fix $\gamma=1/c$.  Then \eqref{eq:ferreiro02}, Definition \ref{def:fp},
\eqref{eq:broyden} lead to
\begin{align}
  \label{eq:fletcher}
 x^{k,\ell} = \mathcal{F}_1(\hat p^k,\hat z^k,c,\hat x^k,\ell) = \mathcal{F}_1(-\hat b^k,\hat r^k,\gamma^{-1},\hat s^k,\ell)-\hat r^k).
 \end{align}
Combining \eqref{eq:broyden02}, \eqref{eq:paques}, \eqref{eq:ferreiro02},
\eqref{eq:fletcher}, \eqref{eq:broyden}, and \eqref{eq:def.bf}, we deduce that
\begin{align*}
  (s^{k,\ell}, b^{k,\ell}) & = (x^{k,\ell}, -p^{k,\ell}) \\
   & = (x^{k,\ell}, y^{k,\ell}) + (0, -\hat p^k - \gamma^{-1}(x^{k,\ell}-\hat z^k))\\
 & = \mathcal{F}(-\hat b^k,\hat r^k,\gamma^{-1},\hat s^k,\ell) 
                + (0, \hat b^k - \gamma^{-1}(\mathcal{F}_1(-\hat b^k,\hat r^k,\gamma^{-1},\hat s^k,\ell)-\hat r^k))\\
& = \mathcal{B}(\hat r^k, \hat b^k, \gamma, \hat s^k, \hat b^k,\ell),
\end{align*}
which yields \eqref{eq:jordan} and \eqref{eq:ferreiro}.
Note now that \eqref{eq:tartaglia02} is equivalent to the condition
$0\in \partial g (z^{k,\ell}) - p^{k,\ell} +c(z^{k,\ell}-x^{k,\ell})$, which, in view of \eqref{eq:broyden02}, is clearly equivalent to \eqref{eq:tartaglia} with $A=\partial g$. To prove \eqref{eq:lagrange}, note that from \eqref{eq:broyden}, \eqref{eq:broyden02}, \eqref{eq:paques} and \eqref{eq:lagrange02} we obtain
\begin{align*}
 \norm{s^{k,\ell}+\gamma b^{k,\ell}-(\hat r^k+\gamma \hat b^k)}^2 & =  \norm{\gamma\, y^{k,\ell}}^2\\
 & \leq \gamma^2\sigma^2\left( \norm{p^{k,\ell}-\hat p^k-c(z^{k,\ell}-\hat z^k)}^2+ c^2\norm{x^{k,\ell}-z^{k,\ell}}^2\right)
\end{align*}
which in view of \eqref{eq:broyden} and \eqref{eq:broyden02} is equivalent to
\eqref{eq:lagrange}. Finally, similar reasoning establishes that
\eqref{eq:abel}-\eqref{eq:galois} are equivalent to
\eqref{eq:abel02}-\eqref{eq:galois02}.
\end{proof}


\begin{theorem}[Convergence of Algorithm \ref{admm_inertial}]
\label{th:conv_admm}
%
Consider any execution of Algorithm~\ref{admm_inertial} for which $\alpha\in
[0,1)$, $\overline{\rho}\in (0,2)$, and $\{\alpha_k\}$ satisfy conditions
\eqref{eq:alpha} and \eqref{eq:betatau} of Theorem \ref{th:wc}.
Then:
\begin{itemize}
\item[\emph{(a)}] If for each $k\geq 0$ the outer loop (over $k$) executes an
infinite number of times, with each inner loop (over $\ell$) terminating in a
finite number of iterations $\ell=\ell(k)$, then $\{x^k\}$ and $\{z^k\}$ both
converge to some $x^*\in\R^n$ solution of
\eqref{eq:mip_fg}, and $\{p^k\}$ converges to some $p^*\in \partial g(x^*)$
such that $-p^*\in \partial f(x^*)$.
\item[\emph{(b)}]  If the outer loop executes only a finite number of times, ending with $k=\bar k$, with the last invocation of the inner loop executing an infinite number of times, then $\{x^{\bar k,\ell}\}_{\ell}$ and $\{z^{\bar k,\ell}\}_{\ell}$ both converge to some $x^*\in\R^n$ solution of \eqref{eq:mip_fg}, and $\{p^{\bar k,\ell}\}_{\ell}$ converges to some $p^*\in \partial g(x^*)$ 
such that $-p^*\in \partial f(x^*)$.
\item[\emph{(c)}] If \emph{Algorithm \ref{admm_inertial}} stops with either $p^{k,\ell}-\hat p^k=c(z^{k,\ell}-\hat z^k)$ or $x^{k,\ell}=z^{k,\ell}$ then $x^*:=x^{k,\ell}=z^{k,\ell}$ is a solution of \eqref{eq:mip_fg}.
\end{itemize}
\end{theorem}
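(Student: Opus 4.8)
The plan is to prove Theorem~\ref{th:conv_admm} by reducing it to
Theorem~\ref{th:conv_dr} via the correspondence established in
Proposition~\ref{pr:admm_e_dr}.  That proposition already shows that any
execution of Algorithm~\ref{admm_inertial}, under the substitutions
\eqref{eq:broyden}--\eqref{eq:broyden02} with $\gamma=1/c$, $A=\partial g$,
and $B=\partial f$, generates sequences satisfying the recursions
\eqref{eq:cartano}--\eqref{eq:galois} of Algorithm~\ref{alg:ine_dr}.  Since
the parameter hypotheses \eqref{eq:alpha} and \eqref{eq:betatau} are
identical in both theorems, the conclusions of Theorem~\ref{th:conv_dr} apply
directly to the translated sequences.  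The remaining work is therefore
purely a matter of translating each conclusion of Theorem~\ref{th:conv_dr}
back into the ADMM variables using the dictionary $s^k=x^k$, $b^k=-p^k$,
$r^k=z^k$, and recalling that \eqref{eq:mip_fg} is exactly \eqref{eq:mip_ab}
with $A=\partial g$ and $B=\partial f$.

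For part (a), I would invoke Theorem~\ref{th:conv_dr}(a): it yields a solution
$x^*$ of \eqref{eq:mip_fg}, with $\{s^k\}$ and $\{r^k\}$ converging to $x^*$
and $\{b^k\}$ converging to some $b^*\in B(x^*)=\partial f(x^*)$ with
$-b^*\in A(x^*)=\partial g(x^*)$.  Under the dictionary this says precisely
that $\{x^k\}$ and $\{z^k\}$ both converge to $x^*$, while $\{p^k\}=\{-b^k\}$
converges to $p^*:=-b^*$.  The membership $-b^*\in\partial g(x^*)$ becomes
$p^*\in\partial g(x^*)$, and $b^*\in\partial f(x^*)$ becomes
$-p^*\in\partial f(x^*)$, which is the asserted conclusion.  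Part (b) is
handled identically, applying Theorem~\ref{th:conv_dr}(b) to the translated
inner-loop sequences $\{s^{\bar k,\ell}\}_\ell$, $\{r^{\bar k,\ell}\}_\ell$,
and $\{b^{\bar k,\ell}\}_\ell$ and then rewriting in ADMM notation.

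Part (c) requires slightly more care because its hypothesis is stated in two
alternative forms.  If $x^{k,\ell}=z^{k,\ell}$, this is exactly the stopping
condition $s^{k,\ell}=r^{k,\ell}$ of Algorithm~\ref{alg:ine_dr}, so
Theorem~\ref{th:conv_dr}(c) immediately gives that
$x^*:=s^{k,\ell}=r^{k,\ell}$ solves \eqref{eq:mip_ab}, hence \eqref{eq:mip_fg}.
For the alternative hypothesis $p^{k,\ell}-\hat p^k=c(z^{k,\ell}-\hat z^k)$, I
would show it forces $y^{k,\ell}=0$ and $x^{k,\ell}=z^{k,\ell}$ as well:
substituting this equality into the definition \eqref{eq:paques} of
$p^{k,\ell}$ gives $c(x^{k,\ell}-\hat z^k)-y^{k,\ell}=c(z^{k,\ell}-\hat z^k)$,
i.e. $c(x^{k,\ell}-z^{k,\ell})=y^{k,\ell}$; feeding this into the termination
test \eqref{eq:lagrange02} forces both sides to collapse (the left side
becomes $c^2\norm{x^{k,\ell}-z^{k,\ell}}^2$ while the right side is
$\sigma^2 c^2\norm{x^{k,\ell}-z^{k,\ell}}^2$ with $\sigma<1$, whence
$x^{k,\ell}=z^{k,\ell}$ and $y^{k,\ell}=0$).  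Then the optimality conditions
from \eqref{eq:jordan02} (via the $\mathcal{F}$-procedure inclusion
\eqref{eq:jordan04} with $y^{k,\ell}=0$) and \eqref{eq:tartaglia02} combine to
give $0\in\partial f(x^*)+\partial g(x^*)$ at $x^*:=x^{k,\ell}=z^{k,\ell}$.
The main obstacle is making the $\sigma<1$ contraction argument for this
second alternative watertight, since it is the only place the reduction to
Theorem~\ref{th:conv_dr} does not directly apply and one must argue from the
ADMM-level definitions; everything else is bookkeeping through the dictionary
of Proposition~\ref{pr:admm_e_dr}.
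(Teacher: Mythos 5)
Your proposal is correct and takes essentially the same route as the paper, whose entire proof is the one-line observation that the result follows by combining Proposition~\ref{pr:admm_e_dr} with Theorem~\ref{th:conv_dr} and the definitions of the two algorithms. Your additional argument for the stopping condition $p^{k,\ell}-\hat p^k=c(z^{k,\ell}-\hat z^k)$ in part (c) --- using \eqref{eq:paques} to get $y^{k,\ell}=c(x^{k,\ell}-z^{k,\ell})$ and then \eqref{eq:lagrange02} with $\sigma<1$ to force $x^{k,\ell}=z^{k,\ell}$ and $y^{k,\ell}=0$ --- is sound, and in fact supplies a detail the paper's proof silently omits, since that stopping condition has no direct counterpart in Theorem~\ref{th:conv_dr}(c).
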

\begin{proof}
The result follows from immediately by combining Proposition \ref{pr:admm_e_dr},
Theorem \ref{th:conv_dr}, and the definitions of Algorithms \ref{alg:ine_dr}
and \ref{admm_inertial}.
\end{proof}

\section{Numerical experiments}
 \label{sec:exp}

This section describes numerical experiments on the LASSO and logistic regression problems, which are both instances of the minimization problem \eqref{eq:opt}. We tested the following algorithms:  the inexact relative-error ADMM \emph{admm\_primDR} from
 \cite{eck.yao-rel.mp17}; the relative-error method \emph{relerr} from \cite{eck.yao-app.coap17};  Algorithm \ref{admm_inertial} from this paper, which we denote as
\emph{admm\_primDR\_relx\_in}; the absolute-error aproximate ADMM \emph{absgeom} discussed in \cite{eck.yao-rel.mp17} and a backtraking variant of \emph{FISTA} \cite{MR2486527} (also discussed in \cite{eck.yao-rel.mp17}). We implemented all algorithms in MATLAB, and, analogously
to \cite{eck.yao-rel.mp17}, we used the following condition to terminate the outer loop:
\begin{align}
 \mbox{dist}_{\infty}\left(0,\partial_x[f(x)+g(x)]_{x=x^k}\right)\leq \epsilon,
\end{align}
where 
$\mbox{dist}_{\infty}(t,S):=\inf\{\|t-s\|_\infty\,|\,s\in S\}$, and $\epsilon>0$ is a tolerance parameter set to $10^{-6}$.

Moreover, in our implementation of Algorithm \ref{admm_inertial} from this paper, we replaced the error condition \eqref{eq:lagrange02} with the stronger condition
\begin{align}
 \label{eq:lagrange03}
 \norm{y^{k,\ell}} &\leq 
 \sigma \max\big\{\norm{p^{k,\ell}-\hat p^k-c(z^{k,\ell}-\hat z^k)}, 
                  c\norm{x^{k,\ell}-z^{k,\ell}}\big\},
\end{align}
which we empirically found to yield better numerical performance.

\subsection{Numerical experiments on the LASSO problem}
\label{subsec:ne}

In this subsection, we report numerical experiments on the LASSO
problem~\cite{tib-las.jrss96}
\begin{align}
 \label{eq:lasso}
 \min_{x\in \R^n}\,\dfrac{1}{2}\norm{Ax-b}^2 + \nu\|x\|_1,
\end{align}
where $A\in \R^{m\times n}$, $b\in \R^m$ and $\nu>0$, which is an instance of \eqref{eq:opt} with
$f(x):=(1/2)\norm{Ax-b}^2$ and $g(x):=\nu\norm{x}_1$.  For the data $A$ and
$b$, we used the same (non-artificial) datasets as in~\cite{eck.yao-rel.mp17}.
%
%
%
%
%

We tested three algorithms for solving \eqref{eq:lasso}:
\begin{itemize}
 \item The inexact relative-error ADMM \emph{admm\_primDR} from
 \cite{eck.yao-rel.mp17}. For this algorithm, we used the same parameter
  values as in~\cite{eck.yao-rel.mp17}, namely $\sigma=0.99$ and $c=1$ (except
  for the PEMS problem instance, for which $c=20$).  
%
%
\item The relative-error algorithm \emph{relerr} from \cite{eck.yao-app.coap17}. 
We also used $\sigma=0.99$, $c=1$ (for all problem instances except PEMS, which we used $c=20$). 
For this set of LASSO
  problems, the experiments in~\cite{eck.yao-app.coap17,eck.yao-rel.mp17} already
  show \emph{admm\_primDR} to outperform the algorithms
  of~\cite{eck.yao-app.coap17}, as well as FISTA~\cite{MR2486527}.
\item Algorithm \ref{admm_inertial} from this paper which we denote as
\emph{admm\_primDR\_relx\_in}. 
 We used the parameter settings $\alpha_k\equiv
\alpha = 0.18966$, $\beta=0.18976$ and 
$\rho_k\equiv \underline{\rho}=\overline{\rho}=1.4882$
--- see conditions \eqref{eq:alpha} and \eqref{eq:betatau} and Figure
    \ref{fig02}. We also set $\sigma=0.99$ and $c=1$ (except for the PEMS
    problem instance, for which $c=20$).
\end{itemize}

We implemented all of the 
algorithms in MATLAB, using a conjugate gradient procedure to
approximately solve the subproblems corresponding to $f(x) =
(1/2)\norm{Ax-b}^2$, exactly as in~\cite{eck.yao-rel.mp17}.
Table~\ref{tab:outer} shows number of outer iterations, Table~\ref{tab:inner}
shows the total number of inner (conjugate gradient) iterations, and
Table~\ref{tab:runtime02} shows runtimes in seconds. 
Figure~\ref{fig03} shows the same results graphically.
In each table, the smallest value in each row
appears in bold.  In terms of runtime, the new algorithm
outperforms that of~\cite{eck.yao-rel.mp17} for all problem except the
\emph{finance1000} instance.

\addtolength{\tabcolsep}{-2pt} 

\begin{table}
\begin{center}
\caption{LASSO outer iterations; $\alpha=0.18966$, $\beta=0.18976$ and $\bar\rho=1.4882$} 
\label{tab:outer}
\begin{tabular}{lrcrcrcrcrr} \hline \\
   Problem & & relerr &&  primDR & &
    primDR\_relx\_in & &  $\frac{iteration 3}{iteration 1}$ & &
   $\frac{iteration 3}{iteration 2}$ \\
   & & {\tiny $(iteration 1)$}  & & {\tiny $(iteration 2)$} & &  {\tiny $(iteration 3)$}  & & & & \\ \hline \\
   Ball64\_singlepixcam & & 280 & & 278 & & \textbf{123} & & 0.439 & & 0.442 \\ 
    Logo64\_singlepixcam & & 283 & & 282 & & \textbf{139} & & 0.491 & & 0.493 \\
    Mug32\_singlepixcam & & 153 & & 153 & &  \textbf{136} & & 0.888 & & 0.888 \\
    Mug128\_singlepixcam & & 920 & & 914 & & \textbf{435} & & 0.473 & & 0.476\\
    finance1000 & & \textbf{974} & & 1709 & & 1079 & & 1.107 & & 0.631 \\
    PEMS & & 3354 & & 3648 & & \textbf{1088} & & 0.324 & & 0.298 \\
    Brain & & 1855 & & 2295  & & \textbf{1219} & & 0.657 & & 0.531\\
    Colon & & 450 & & 482 & & \textbf{256} & & 0.568 & & 0.531\\
    Leukemia & & 675 & & 774 & & \textbf{424} & & 0.628 & & 0.547\\
    Lymphoma & & 908 & & 925 & & \textbf{482} & & 0.531 & & 0.521\\
    Prostate & & 1520 & & 1739 & & \textbf{998} & & 0.656 & & 0.574 \\
    srbct & & 426 & & 401 & & \textbf{221} & & 0.519 & & 0.551 \\ \hline
    Geometric mean & & 692.06 & &  761.02 & & \textbf{399.85} & & 0.577 & & 0.525 \\ \hline \\
\end{tabular}
\end{center}
\end{table}
\FloatBarrier

\begin{table}
\begin{center}
\caption{LASSO total inner iterations; $\alpha=0.18966$, $\beta=0.18976$ and $\bar\rho=1.4882$} 
\label{tab:inner}
\begin{tabular}{lrcrcrcrcrr}\hline \\
Problem & & relerr & & primDR & & primDR\_relx\_in & & {$\frac{iteration 3}{iteration 1}$} & &  {$\frac{iteration 3}{iteration 2}$} \\
& & {\tiny $(iteration 1)$} & & {\tiny $(iteration 2)$} & & {\tiny $(iteration 3)$}  & & & &  \\  \hline \\
  Ball64\_singlepixcam & & 603 & & 382 & & \textbf{191} & & 0.316 & & 0.500 \\
   Logo64\_singlepixcam & & 621 & & 369 & & \textbf{212} & & 0.341 & & 0.574 \\
    Mug32\_singlepixcam & & 998 & & 307 & & \textbf{302} & & 0.303 & & 0.984\\
    Mug128\_singlepixcam & & 1214 & & 1046 & & \textbf{488} & & 0.402 & & 0.466   \\
    finance1000 & & 18944 & & \textbf{7852} & & 9737 & & 0.514 & & 1.240\\
    PEMS & & 85858 & & 9318 & &  \textbf{9235} & & 0.107 & & 0.991 \\
    Brain & & 24612 & & \textbf{7116} & & 7655 & & 0.311 & & 1.075\\
    Colon & & 5847 & & \textbf{1401} & &  1461 & & 0.249 & & 1.042\\
    Leukemia & & 7888 & & \textbf{2321} & & 2543 & & 0.322 & & 1.095\\
    Lymphoma & & 15266 & & 3179 & & \textbf{3083} & & 0.202 & & 0.969\\
    Prostate & & 20615 & & \textbf{5193} & & 6629 & & 0.321 & & 1.276\\
    srbct & & 6213 & & 1505 & & \textbf{1334} & & 0.215 & & 0.886 \\ \hline
    Geometric mean & & 5859.43 & & 1876.32 & & \textbf{1652.97} & & 0.282 & & 0.880\\ \hline \\
\end{tabular}
\end{center}
\end{table}
\FloatBarrier

\begin{table}
\begin{center}
\caption{LASSO runtimes in seconds; $\alpha=0.18966$, $\beta=0.18976$ and $\bar\rho=1.4882$} 
\label{tab:runtime02}
\begin{tabular}{lrcrcrcrcrr}\hline \\
Problem & & relerr & & primDR & & primDR\_relx\_in & & {$\frac{time 3}{time 1}$} & & {$\frac{time 3}{time 2}$} \\
& & {\tiny $(time 1)$} & & {\tiny $(time 2)$} & & {\tiny $(time 3)$} & & & & \\ \hline \\
Ball64\_singlepixcam & & 11.02 & & 7.86 & & \textbf{3.75} & & 0.341 & & 0.477 \\
    Logo64\_singlepixcam & & 11.37 & & 7.62 & & \textbf{4.04} & & 0.355 & & 0.531  \\
    Mug32\_singlepixcam & & 1.07 & & 0.51 & & \textbf{0.43} & & 0.374 & & 0.862 \\
    Mug128\_singlepixcam & & 248.38 & & 218.08 & & \textbf{101.17} & & 0.407 & & 0.464 \\
    finance1000 & & 805.17 & & \textbf{327.56} & & 347.97 & & 0.432 & & 1.062 \\
    PEMS & & 7546.11 & & 1092.16 & & \textbf{988.12} & & 0.131& & 0.905 \\
    Brain & & 13.59 & & 5.94 & & \textbf{5.53} & & 0.407 & & 0.929 \\
    Colon & & 1.56 & & 0.45 & & \textbf{0.28} & & 0.179 & & 0.620 \\
    Leukemia & & 4.24 & & 2.23 & & \textbf{1.59} & & 0.375 & & 0.717  \\
    Lymphoma & & 7.18 & & 2.63 & & \textbf{2.03} & & 0.283 & & 0.773 \\
    Prostate & & 33.21 & & 13.15 & & \textbf{11.88} & & 0.357 & & 0.904\\
    srbct & & 1.83 & & 0.42 & & \textbf{0.35} & & 0.192 & & 0.847  \\ \hline
    Geometric mean & & 21.13 & & 8.75 & & \textbf{6.41} & & 0.303 & & 0.733 \\ \hline 
\end{tabular}
\end{center}
\end{table}
\FloatBarrier

\vspace{0.6cm}
\begin{figure}
    \centering
    \caption{Comparison of performance in LASSO problems}
    \label{fig03}
    \vspace{0.2cm}
    \begin{minipage}{\linewidth}
    \centering
    \subfloat[][LASSO outer iterations]{
        \begin{tikzpicture}[scale=.55]
    \begin{axis}[
    width  = 13cm,
    height = 8cm,
    major x tick style = transparent,
    ybar=0.2mm,
    bar width=1.5mm,
    ymajorgrids = true,
    ylabel={Outer iterations},
    symbolic x coords={ Ball64, Logo64, Mug32,  Mug128, Finance, PEMS, Brain,  Colon, Leukemia,  Lymphoma, Prostate, Srbct},
    xtick = data,
    x tick label style = {rotate=90},
    scaled y ticks= base 10:-3,
    ymin=0,ymax=3.8e3,
    legend cell align= left,
    legend style={
        at={(0.02,0.72)},
        anchor= south west,
        column sep=1ex},]
    \addplot [yellow, fill=yellow!90] table[x=simname,y=relerr] {\dispdata};
    \addplot [blue, fill=blue!90] table[x=simname,y=primDR]    {\dispdata};
    \addplot [red, fill=red!90] table[x=simname,y=PrimDR]  {\dispdata};
    \legend{\small relerr, \small primDR, \small primDR\_relx\_in};
  \end{axis}
\end{tikzpicture} } \hspace{0.5cm}
    \subfloat[][LASSO total inner iterations]{
         \begin{tikzpicture}[scale=.55]
    \begin{axis}[
    width  = 13cm,
    height = 8cm,
    major x tick style = transparent,
    ybar=0.2mm,
    bar width=2mm,
    ymajorgrids = true,
    ylabel={Cumulative inner iterations},
    symbolic x coords={Ball64, Logo64, Mug32, Mug128, Colon, Leukemia, Lymphoma, Prostate, Srbct},
    xtick = data,
    x tick label style = {rotate=90},
    scaled y ticks= base 10:-3,
    ymin=0,ymax=21500,
    ytick style={draw=none},
    legend cell align=left,
    legend style={
        at={(0.02,0.72)},
        anchor= south west,
        column sep=1ex
    },
    ]

    \addplot [yellow, fill=yellow!90] table[x=name,y=rel] {\dispdata};
    \addplot [blue, fill=blue!90] table[x=name,y=prim]    {\dispdata};
    \addplot [red, fill=red!90] table[x=name,y=Prim]  {\dispdata};
    \legend{\small relerr, \small primDR, \small primDR\_relx\_in};
    \end{axis}
\end{tikzpicture}}
    \end{minipage}\par\medskip
    
    \begin{minipage}{\linewidth}
    \centering
    \subfloat[][LASSO total inner iterations]{
         \begin{tikzpicture}[scale=.55]
     \begin{axis}[
    width  = 13cm,
    height = 8cm,
    major x tick style = transparent,
    ybar=0.2mm,
    bar width=2mm,
    ymajorgrids = true,
    ylabel={Cumulative inner iterations},
    symbolic x coords={Finance, PEMS, Brain},
    xtick = data,
    x tick label style = {rotate=90},
    scaled y ticks= base 10:-3,
    ymin=0,ymax=88500,
    ytick style={draw=none},
    legend cell align=left,
    legend style={
        at={(0.02,0.72)},
        anchor= south west,
        column sep=1ex
    },
    ]

    \addplot [yellow, fill=yellow!90] table[x=A,y=B] {\dispdata};
    \addplot [blue, fill=blue!90] table[x=A,y=C]    {\dispdata};
    \addplot [red, fill=red!90] table[x=A,y=D]  {\dispdata};
    \legend{\small relerr, \small primDR, \small primDR\_relx\_in};
\end{axis}
\end{tikzpicture}} \hspace{0.5cm}
    \subfloat[][LASSO runtimes in seconds]{
         \begin{tikzpicture}[scale=.55]
    \begin{axis}[
    width  = 13cm,
    height = 8cm,
    major x tick style = transparent,
    ybar=0.2mm,
    bar width=2mm,
    ymajorgrids = true,
    ylabel={Runtimes in seconds},
    symbolic x coords={Mug128, Finance, PEMS},
    xtick = data,
    x tick label style = {rotate=90},
    scaled y ticks= base 10:-3,
    ymin=0,ymax=8000,
    legend cell align= left,
    legend style={
        at={(0.02,0.72)},
        anchor= south west,
        column sep=1ex
    },
    ]
    \addplot [yellow, fill=yellow!90] table[x=E,y=F] {\dispdata};
    \addplot [blue, fill=blue!90] table[x=E,y=G]    {\dispdata};
    \addplot [red, fill=red!90] table[x=E,y=G]  {\dispdata};
    \legend{\small relerr, \small primDR, \small primDR\_relx\_in};
  \end{axis}
\end{tikzpicture}}
    \end{minipage}\par\medskip
    
    \begin{minipage}{0.98 \linewidth}
    \centering
   \subfloat[][LASSO runtimes in seconds]{
         \begin{tikzpicture}[scale=0.55]
    \begin{axis}[
    width  = 13cm,
    height = 8cm,
    major x tick style = transparent,
    ybar=0.2mm,
    bar width=2mm,
    ymajorgrids = true,
    ylabel={Runtimes in seconds},
    symbolic x coords={Ball64, Logo64, Mug32, Brain, Colon, Leukemia, Lymphoma, Prostate, Srbct},
    xtick = data,
    x tick label style = {rotate=90},
    scaled y ticks = false,
    ymin=0,ymax=35,
    ytick style={draw=none},
    legend cell align=left,
    legend style={
        at={(0.02,0.72)},
        anchor= south west,
        column sep=1ex
    },
    ]

    \addplot [yellow, fill=yellow!90] table[x=Name,y=err] {\dispdata};
    \addplot [blue, fill=blue!90] table[x=Name,y=DR]    {\dispdata};
    \addplot [red, fill=red!90] table[x=Name,y=Relx]  {\dispdata};
    \legend{\small relerr, \small primDR, \small primDR\_relx\_in};
    \end{axis}
   \end{tikzpicture}}
    \end{minipage}
    \end{figure}
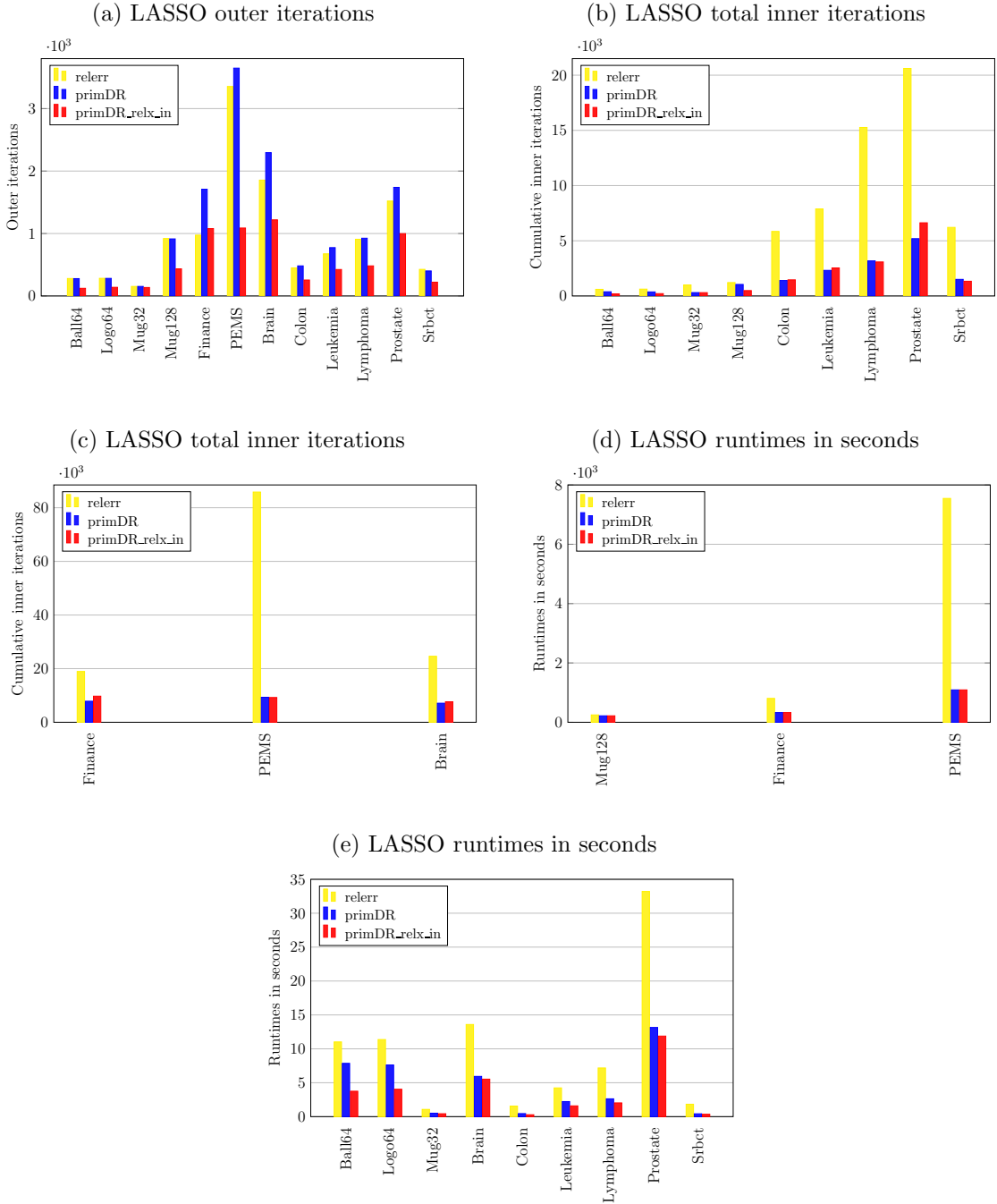
    \FloatBarrier

\subsection{Numerical experiments on logistic regression problems}
\label{subsec:log}

This section describes numerical experiments on the $\ell_1$--regularized logistic regression problem \cite{fra-ele.mi05, ng-fea.icml04}
\begin{align}
 \label{pr:lr}
  \min_{(w,v)\in \R^{n-1}\times \R}\,\sum_{i=1}^q\,\log\left(1+\mbox{exp}(-b_i(a_i^Tw+v))\right)+\nu\norm{w}_1,
\end{align}
using a training dataset consisting of $q$ pairs $(a_i,b_i)$,
where $a_i\in \R^{n-1}$ is a feature vector, $b_i\in \{-1,+1\}$ is the
corresponding label, $w\in \R^{n-1}$ represents a weighting of the feature and
$v\in \R$ reresents a kind of bias. Problem \eqref{pr:lr} is clearly a special
instance of \eqref{eq:opt} with $x=(v,w)$ and
\begin{align}
 \label{eq:fg.lr}
 f(v,w):=\sum_{i=1}^q\,\log\left(1+\mbox{exp}(-b_i(a_i^Tw+v))\right)\;\;\;\mbox{and}\;\;\;g(v,w):=\nu\norm{w}_1.
\end{align}
We considered four standard cancer DNA microarray
non-artificial datasets from \cite{det.buh-fin.jma04} (also
used in \cite[Subsection 7.2]{eck.yao-rel.mp17}) and tested five algorithms:
\emph{absgeom}, \emph{relerr}, \emph{admm\_primDR}, \emph{FISTA} and
\emph{admm\_primDR\_relx\_in}. For \emph{relerr} and \emph{admm\_primDR}
algorithms we used the same parameter values as in Subsection \ref{subsec:ne};
for \emph{admm\_primDR\_relx\_in} we used the parameter settings
$\alpha_k\equiv
\alpha = 0.1$, $\beta=0.1001$ 
and $\rho_k\equiv \underline{\rho}=\overline{\rho}=1.7606$ --- see conditions
\eqref{eq:alpha} and \eqref{eq:betatau} and Figure \ref{fig02}. We also
set $\sigma=0.99$ and $c=1$.

Analogously to \cite{eck.yao-rel.mp17}, we used an L-BFGS
procedure to approximately solve the subproblems
corresponding to $f(\cdot)$ from \eqref{eq:fg.lr}. Tables
\ref{tab:outer04}, \ref{tab:inner05} and \ref{tab:time06} show outer
iterations, total inner iterations and runtimes, respectively. These results
are also graphically summarized in Figure \ref{fig04}.
The new algorithm has the best aggregate performance by all
measures, and the best run time for all the datesets.

\begin{table}
\centering
\caption{Outer iterations for logistic regression problems.} 
\label{tab:outer04}
\begin{tabular}{lrcrcrcc}\hline  \\
Problem & & absgeom & & relerr & & primDR &  primDR\_relx\_in  \\
& & {\tiny $(iteration 1)$} & & {\tiny $(iteration 2)$} & & {\tiny $(iteration 3)$} & {\tiny $(iteration 4)$}  \\ \hline \\
    Colon & & 2666 & & 2145 & & 1979 &  \textbf{1578}   \\
    Leukemia & & 1662 & & 1116  & & 922 &  \textbf{788}  \\
    Prostate & & 1936 & & 1583 & & 1677 & \textbf{1198} \\
    Arcene & & 419 & & \textbf{276} & & 359 &  290 \\  \hline
    Geometric mean & & 1376.91  & & 1011.28 & & 1023.76 &  \textbf{810.72} \\ \hline \hline \\
    Problem & & $\frac{iteration 4}{iteration 1}$ & & $\frac{ iteration 4}{iteration 2}$ & & $\frac{iteration 4}{iteration 3}$ &  $\frac{iteration 2}{iteration 3}$  \\ 
    & &  & &  & &  & \\ \hline \\
    Colon & & 0.5919 & & 0.7356 & & 0.7974 &  1.0839  \\
    Leukemia & & 0.4741 & & 0.7061 & & 0.8546 & 1.2104  \\
    Prostate & & 0.6188 & & 0.7568 & & 0.7144 & 0.9439 \\
    Arcene & & 0.6921 & & 1.0507 & & 0.8078 & 0.7688 \\  \hline
    Geometric mean & & 0.5887 & & 0.8016 & & 0.7924 & 0.9849 \\ \hline 
\end{tabular}
\end{table}

\vspace{0.3cm}
\begin{table}
\begin{center}
\centering
\caption{Total inner iterations for logistic regression problems.} 
\label{tab:inner05}
\begin{tabular}{lrcrcrcrcc}\hline \\
Problem & & absgeom & & relerr & & primDR & & FISTA & primDR\_relx\_in  \\
& & {\tiny $(iteration 1)$} & & {\tiny $(iteration 2)$} & & {\tiny $(iteration 3)$} & & {\tiny $(iteration 4)$} & {\tiny $(iteration 5)$} \\ \hline \\
    Colon & & 20612 & & 23919 & & 21697 & & 26247  & \textbf{8283}  \\
    Leukemia & & 7715 & & 12086 & & 11625 & & 6536 &  \textbf{4448}  \\
    Prostate & & 18901 & & 27505 & & 24548 & & 13730 & \textbf{10997} \\
    Arcene & & \textbf{780} & & 3236 & & 3589 & & 4648 & 1450\\  \hline
    Geometric mean & & 6958.73  & & 12665.18 & & 12209.43 & & 10228.97 & \textbf{4923.21} \\ \hline \hline \\
    Problem & & $\frac{iteration 5}{iteration 1}$ & & $\frac{iteration 5}{iteration 2}$ & & $\frac{iteration 5}{iteration 3}$ & & $\frac{iteration 5}{iteration 4}$ & $\frac{iteration 4}{iteration 1}$ \\ 
    & &  & &  & &  & &  & \\ \hline \\
    Colon & & 0.4018 & & 0.3463 & & 0.3817 & & 0.3156 & 0.9499  \\
    Leukemia & & 0.5765 & & 0.3681 & & 0.3826  & & 0.6805 & 0.6636 \\
    Prostate & & 0.5818 & & 0.3998 & & 0.4479 & & 0.8009 & 0.7699 \\
    Arcene & & 1.8589 & & 0.4481 & & 0.4041 & & 0.3119 & 0.2173 \\  \hline
    Geometric mean & & 0.7074 & & 0.4032 & & 0.4032 & & 0.4813 & 0.5699 \\ \hline 
\end{tabular}
\end{center}
\end{table}

\begin{table}
\centering
\caption{Logistic regression runtimes in seconds.}
\label{tab:time06}
\begin{tabular}{lrcrcrcrcc}\hline \\
 Problem & & absgeom & & relerr & & primDR & & FISTA & primDR\_relx\_in  \\
& & {\tiny $(time 1)$} & & {\tiny $(time 2)$} & & {\tiny $(time 3)$} & & {\tiny $(time 4)$} &  {\tiny $(time 5)$} \\ \hline \\
    Colon & & 182.3601 & & 36.5207 & & 91.5726 &  &73.2987  & \textbf{12.8243}  \\
    Leukemia & & 112.7412 & & 105.4221 & & 241.1378 & & 60.9476 & \textbf{23.0547}   \\
    Prostate & & 342.1609 & & 719.6731 & & 850.8159 & & 206.3883 & \textbf{128.6972} \\
    Arcene & & 122.7208 & & 312.1101 & & 370.9415 & & 184.3489 & \textbf{46.1276}\\  \hline
    Geometric mean & & 171.41 & & 224.11  & & 288.93 & & 114.18 & \textbf{36.39}  \\\hline \hline \\
    Problem & & $\frac{time 5}{time 1}$ & & $\frac{time 5}{time 2}$ & & $\frac{time 5}{time 3}$ & & $\frac{time 5}{time 4}$ & $\frac{time 4}{time 3}$ \\ 
    & &  & &  & &  & & & \\\hline \\
    Colon & & 0.0703 & & 0.1203 & & 0.1401 & & 0.1749 & 0.8003    \\
    Leukemia & & 0.2045 & & 0.2186 & & 0.0956 & & 1.0215 & 0.2527   \\
    Prostate & & 0.3761 & & 0.1788 & & 0.1513 & & 0.6236 & 0.2426  \\
    Arcene & & 0.3759 & & 0.1478 & & 0.1244 & & 0.2502 & 0.4969 \\  \hline
    Geometric mean & & 0.2123 & & 0.1623 & & 0.1259 & & 0.3187 & 0.3951 \\ \hline 
\end{tabular}
\end{table}

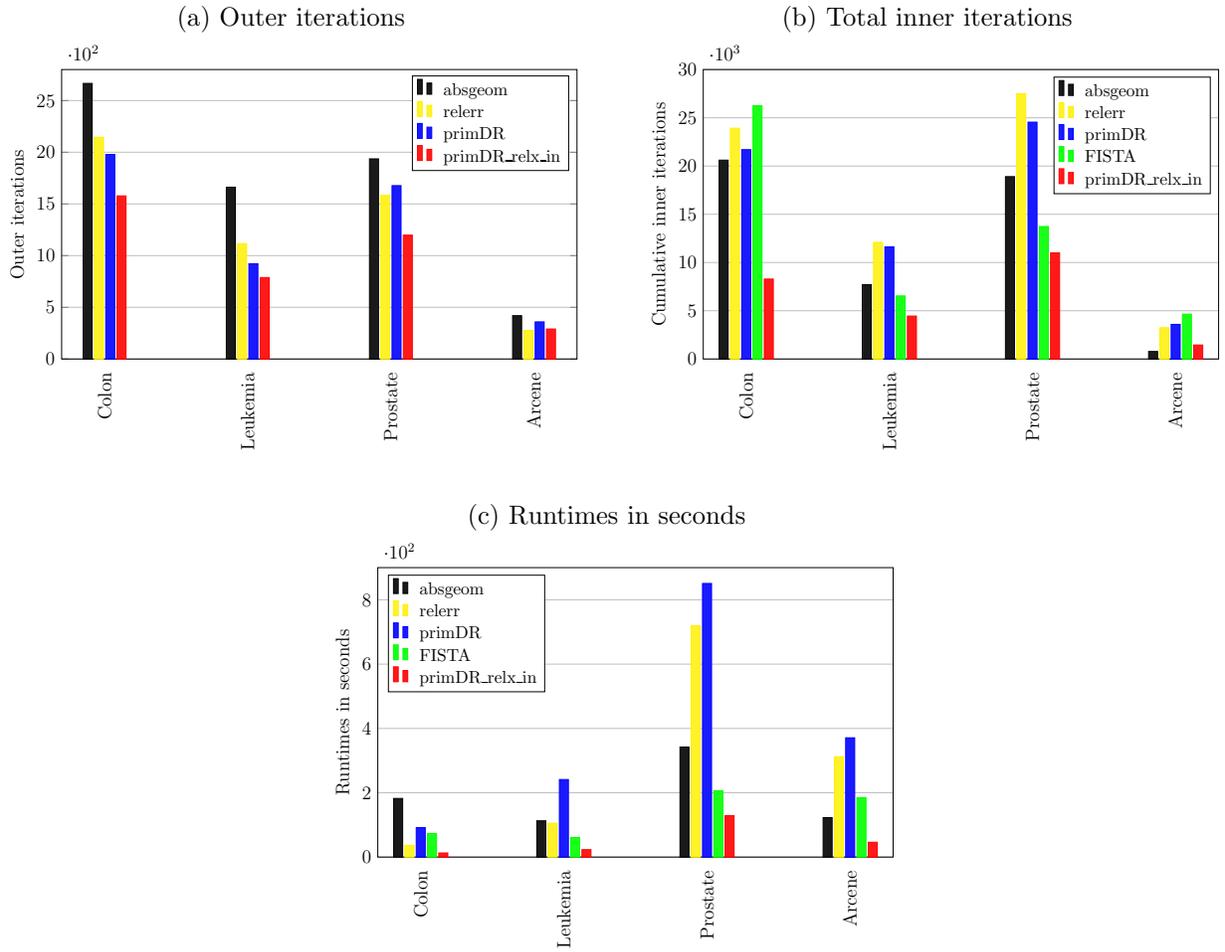
\begin{figure}
    \centering
    \caption{Comparison of performance in logistic regression problems}
    \label{fig04}
    \vspace{0.2cm}
    \begin{minipage}{\linewidth}
    \centering
    \subfloat[][Outer iterations]{
        \begin{tikzpicture}[scale=.6]
    \begin{axis}[
    width  = 13cm,
    height = 8cm,
    major x tick style = transparent,
    ybar=0.5mm,
    bar width=2mm,
    ymajorgrids = true,
    ylabel={Outer iterations},
    symbolic x coords={ Colon, Leukemia, Prostate, Arcene},
    xtick = data,
    x tick label style = {rotate=90},
    scaled y ticks= base 10:-2,
    ymin=0,ymax=2.8e3,
    legend cell align= left,
    legend style={
        at={(0.68,0.65)},
        anchor= south west,
        column sep=1ex},]
    \addplot [black, fill=black!90] table[x=I,y=J] {\dispdata};
    \addplot [yellow, fill=yellow!90] table[x=I,y=K]    {\dispdata};
    \addplot [blue, fill=blue!90] table[x=I,y=L]  {\dispdata};
    \addplot [red, fill=red!90] table[x=I,y=M]  {\dispdata};
    \legend{\small absgeom, \small relerr, \small primDR, \small primDR\_relx\_in};
  \end{axis}
\end{tikzpicture} } \hspace{0.5cm}
    \subfloat[][Total inner iterations]{
         \begin{tikzpicture}[scale=.6]
    \begin{axis}[
    width  = 13cm,
    height = 8cm,
    major x tick style = transparent,
    ybar=0.5mm,
    bar width=2mm,
    ymajorgrids = true,
    ylabel={Cumulative inner iterations},
    symbolic x coords={Colon, Leukemia, Prostate, Arcene},
    xtick = data,
    x tick label style = {rotate=90},
    scaled y ticks= base 10:-3,
    ymin=0,ymax=30e3,
    ytick style={draw=none},
    legend cell align=left,
    legend style={
        at={(0.68,0.57)},
        anchor= south west,
        column sep=1ex
    },
    ]

    \addplot [black, fill=black!90] table[x=N,y=O] {\dispdata};
    \addplot [yellow, fill=yellow!90] table[x=N,y=P] {\dispdata};
    \addplot [blue, fill=blue!90] table[x=N,y=Q] {\dispdata};
    \addplot [green, fill=green!90] table[x=N,y=R]    {\dispdata};
    \addplot [red, fill=red!90] table[x=N,y=S]  {\dispdata};
    \legend{\small absgeom,\small relerr, \small primDR,\small FISTA , \small primDR\_relx\_in};
    \end{axis}
\end{tikzpicture}}
    \end{minipage}\par\medskip
    
    \begin{minipage}{0.98 \linewidth}
    \centering
   \subfloat[][Runtimes in seconds]{
         \begin{tikzpicture}[scale=0.6]
    \begin{axis}[
    width  = 13cm,
    height = 8cm,
    major x tick style = transparent,
    ybar=0.5mm,
    bar width=2mm,
    ymajorgrids = true,
    ylabel={Runtimes in seconds},
    symbolic x coords={Colon, Leukemia, Prostate, Arcene},
    xtick = data,
    x tick label style = {rotate=90},
    scaled y ticks= base 10:-2,
    ymin=0,ymax=9e2,
    ytick style={draw=none},
    legend cell align=left,
    legend style={
        at={(0.02,0.57)},
        anchor= south west,
        column sep=1ex },
    ]

    \addplot [black, fill=black!90] table[x=T,y=U] {\dispdata};
    \addplot [yellow, fill=yellow!90] table[x=T,y=V] {\dispdata};
    \addplot [blue, fill=blue!90] table[x=T,y=W] {\dispdata};
    \addplot [green, fill=green!90] table[x=T,y=X]    {\dispdata};
    \addplot [red, fill=red!90] table[x=T,y=Y]  {\dispdata};
    \legend{\small absgeom,\small relerr, \small primDR, \small FISTA,  \small primDR\_relx\_in};
    \end{axis}
   \end{tikzpicture}}
    \end{minipage}
    \end{figure}

\appendix
\section{Auxiliary results}
\label{sec:apx}

\begin{lemma}[See for example Proposition 20.33 of~\cite{bau.com-book}]
\label{lm:simons}
If $T$ is maximal monotone on $\R^n$, $\{(\tilde z^j,v^j)\}$ is such that
$v^j\in T(\tilde z^j)$ for all $j\geq 0$, $\lim_{j\to \infty}\,\tilde
z^j=z^\infty$, and $\lim_{j\to \infty}\,v^j=v^\infty$, then $v^\infty\in
T(z^\infty)$.
\end{lemma}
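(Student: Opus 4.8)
The plan is to invoke the defining property of maximal monotonicity directly, since in finite dimensions the strong convergence of both sequences makes the argument entirely elementary. Recall that $T$ is maximal monotone exactly when it is monotone and its graph admits no proper monotone extension; equivalently, a pair $(u,w)\in\R^n\times\R^n$ satisfies $w\in T(u)$ if and only if $\inner{u-z}{w-v}\geq 0$ for every $(z,v)$ with $v\in T(z)$. This characterization is what I would build the proof around.

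First I would fix an arbitrary pair $(z,v)$ with $v\in T(z)$. For each $j\geq 0$, the hypothesis $v^j\in T(\tilde z^j)$ together with the monotonicity of $T$ yields $\inner{\tilde z^j-z}{v^j-v}\geq 0$. I would then pass to the limit $j\to\infty$: because $\tilde z^j\to z^\infty$ and $v^j\to v^\infty$ and the inner product is continuous on $\R^n\times\R^n$, the inequality survives in the limit, giving $\inner{z^\infty-z}{v^\infty-v}\geq 0$.

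Since $(z,v)$ was an arbitrary element of the graph of $T$, the limit pair $(z^\infty,v^\infty)$ is monotonically related to the entire graph. Maximality then forces $(z^\infty,v^\infty)$ to belong to the graph of $T$, that is, $v^\infty\in T(z^\infty)$, which is the desired claim.

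The only genuinely delicate point is this last step. Monotonicity by itself guarantees only that $(z^\infty,v^\infty)$ never violates the monotonicity inequality against any graph element; upgrading this to actual membership in the graph is precisely what maximality provides, and it is where the hypothesis that $T$ is \emph{maximal} (and not merely monotone) is essential. Because both sequences converge strongly in $\R^n$, no weak-topology or demiclosedness machinery is needed, and the passage to the limit reduces to continuity of the bilinear pairing.
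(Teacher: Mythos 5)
Your proof is correct. The paper does not actually prove this lemma --- it simply cites Proposition 20.33 of Bauschke--Combettes --- and your argument is precisely the standard one underlying that citation: fix an arbitrary pair $(z,v)$ in the graph of $T$, use monotonicity to get $\inner{\tilde z^j-z}{v^j-v}\geq 0$, pass to the limit by continuity of the inner product (legitimate here since both sequences converge strongly in $\R^n$), and then invoke maximality to upgrade ``monotonically related to the whole graph'' to actual graph membership. You also correctly identify that this last step is exactly where maximality, as opposed to mere monotonicity, is indispensable, so there is nothing to add.
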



\mgap

\begin{lemma}
 \label{lm:inverse}
 The inverse function of the scalar map
\begin{align*}
 (0,2)\ni \rho\mapsto \phi(\rho):=\dfrac{2(2-\rho)}{4-\rho+\sqrt{16\rho-7\rho^2}}\in 
 (0,1)
\end{align*}
is 
\begin{align*}
 (0,1)\ni \beta \mapsto \psi(\beta)
 :=\dfrac{2(\beta-1)^2}{2(\beta-1)^2+3\beta-1}\in (0,2).
\end{align*}
\end{lemma}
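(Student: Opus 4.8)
The plan is to prove that the two maps are mutual inverses by fixing $\beta\in(0,1)$ and solving the equation $\phi(\rho)=\beta$ explicitly for $\rho\in(0,2)$, showing that its unique solution is $\rho=\psi(\beta)$. This simultaneously establishes that $\phi$ is a bijection of $(0,2)$ onto $(0,1)$ and that $\psi$ is its inverse, which is exactly the assertion of the lemma (and supplies the identity invoked at the start of the proof of Theorem~\ref{th:wc}).

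First I would rewrite $\phi(\rho)=\beta$ by clearing the denominator and isolating the radical, obtaining $\beta\sqrt{\rho(16-7\rho)}=(4-4\beta)-(2-\beta)\rho$. Since the left-hand side is nonnegative, any genuine solution must make the right-hand side nonnegative as well; I would record this sign condition, since it is what ultimately selects the correct root. Squaring both sides and using $\rho(16-7\rho)=16\rho-7\rho^2$ then yields, after collecting powers of $\rho$ and dividing by $4$, the quadratic $(2\beta^2-\beta+1)\rho^2-(6\beta^2-6\beta+4)\rho+4(\beta-1)^2=0$.

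The key algebraic observation is the identity $2(\beta-1)^2+3\beta-1=2\beta^2-\beta+1$, so that the denominator appearing in $\psi(\beta)$ coincides with the leading coefficient of this quadratic. A short substitution (equivalently, checking that the numerator $2(\beta-1)^2$ makes the quadratic vanish) shows that $\rho=\psi(\beta)$ is a root; since the product of the two roots equals $4(\beta-1)^2/(2\beta^2-\beta+1)$ by Vieta's formula, the other root is $\rho=2$. I would then discard $\rho=2$: it lies on the boundary of the domain, and substituting it into the pre-squaring equation makes the right-hand side $-2\beta<0$, violating the sign condition. To finish, I would verify that $\psi(\beta)$ is admissible: the quadratic $2\beta^2-\beta+1$ has negative discriminant and is hence positive, the inequality $2(\beta-1)^2<2\beta^2-\beta+1$ reduces to $\beta(\beta+1)>0$ and gives $\psi(\beta)\in(0,2)$, and a direct computation shows the right-hand side of the isolated-radical equation equals $-2\beta(3\beta+1)(\beta-1)/(2\beta^2-\beta+1)>0$ on $(0,1)$, confirming the sign condition. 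Hence $\psi(\beta)$ is the unique solution of $\phi(\rho)=\beta$.

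The main obstacle is not conceptual but lies in the squaring step: it necessarily introduces the spurious root $\rho=2$, and the whole argument hinges on the sign bookkeeping that lets me reject it while certifying that $\psi(\beta)$ satisfies the original (unsquared) equation. The remaining computations---expanding the quadratic and performing the final factorizations and sign check---are routine but must be carried out carefully to keep everything clean.
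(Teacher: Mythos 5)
Your proof is correct, but it takes a genuinely different route from the paper's. The paper first shows via the derivative computation~\eqref{psiprime} that $\psi$ is decreasing on $(0,1)$ with $\psi(0)=2$ and $\psi(1)=0$, and then verifies the two composition identities $\phi(\psi(\beta))=\beta$ and $\psi(\phi(\rho))=\rho$ separately by treating the relation as a quadratic in $\beta$ with $\rho$-dependent coefficients, namely $2(1-\rho)\beta^2-(4-\rho)\beta+(2-\rho)=0$ (equation~\eqref{eq:root_bb}); because the leading coefficient $2(1-\rho)$ changes sign, the paper needs a three-way case analysis ($\rho=1$, $\rho<1$, $\rho>1$) to decide which root of that quadratic $\beta$ is before applying the quadratic formula. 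You instead fix $\beta$ and solve $\phi(\rho)=\beta$ for $\rho$, arriving at a quadratic in $\rho$ whose leading coefficient $2\beta^2-\beta+1$ is always positive (negative discriminant), so no case analysis is needed; its roots are $\psi(\beta)$ and the spurious root $\rho=2$ introduced by squaring, and your sign bookkeeping (right-hand side equal to $-2\beta<0$ at $\rho=2$, and equal to $2\beta(1-\beta)(3\beta+1)/(2\beta^2-\beta+1)>0$ at $\rho=\psi(\beta)$, both of which I have checked) correctly discards the former and certifies that the latter solves the original unsquared equation. Existence-plus-uniqueness then delivers bijectivity and both composition identities in one stroke, which is tidier than the paper's two separate verifications and avoids the calculus step. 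Two small repairs: (i) your intermediate inequality ``$2(\beta-1)^2<2\beta^2-\beta+1$'' should read $2(\beta-1)^2<2\bigl(2\beta^2-\beta+1\bigr)$; as written it characterizes $\psi(\beta)<1$ (equivalently $\beta>1/3$) rather than $\psi(\beta)<2$, although the reduction to $\beta(\beta+1)>0$ that you state is the correct one for the latter, so this is evidently a dropped factor of $2$; (ii) to conclude $\psi(\phi(\rho))=\rho$ for every $\rho\in(0,2)$ from your uniqueness statement, you also need $\phi\bigl((0,2)\bigr)\subseteq(0,1)$, a one-line check ($\phi(\rho)>0$ is immediate and $\phi(\rho)<1$ amounts to $-\rho<\sqrt{16\rho-7\rho^2}$) that the paper's own proof likewise leaves implicit.
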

\begin{proof}
We first claim that $\psi(\beta) \in [0,2]$ for all
$\beta\in[0,1]$ and $\psi(\beta) \in (0,2)$ for all $\beta\in(0,1)$.  To
establish this claim, we first note that by elementary calculus and some
simplifications, we have
\begin{equation} \label{psiprime}
\tfrac{d}{d\beta} \psi(\beta)
=
\frac{6\beta^2 - 4\beta -2}{\big(2(\beta-1)^2+3\beta-1\big)^2}
=
\frac{6\beta^2 - 4\beta -2}{\big(2\beta^2 - \beta + 1\big)^2}.
\end{equation}
The discriminant of $2\beta^2 - \beta + 1$ is negative, so it has no real
roots and the denominator of~\eqref{psiprime} is always positive.  The
expression in the numerator is convex and applying the quadratic formula
yields that that its roots are $-1/3$ and $1$, so therefore it is nonpositive
on $[0,1]$ and negative on $(0,1)$. Therefore, $\tfrac{d}{d\beta} \psi(\beta)$
exists for all  $\beta \in [0,1]$ and is negative for all $\beta\in(0,1)$,
implying that $\psi$ is a decreasing function on $(0,1)$. By direct
calculation, $\psi(0)=2$ and $\psi(1) = 0$, so therefore $\big\{\psi(\beta) \;
| \; \beta\in[0,1]\big\} = [0,2]$ and $\big\{\psi(\beta) \; | \;
\beta\in(0,1)]\big\} = (0,2)$, establishing the initial claim.
To continue the proof, we next establish that
\begin{align}
  \label{eq:noite02}
 \phi(\psi(\beta))=\beta\qquad \forall \beta\in (0,1).
\end{align}
To this end, fix any $\beta\in (0,1)$ and define
\[
 (0,2)\ni \rho:= \psi(\beta)=\dfrac{2(\beta-1)^2}{2(\beta-1)^2+3\beta-1}
=\dfrac{2\beta^2-4\beta+2}{2\beta^2-\beta+1},
\]
which implies the quadratic equation
\begin{align}
 \label{eq:root_bb}
 2(1-\rho)\beta^2-(4-\rho)\beta+(2-\rho)=0.
\end{align}
We now consider three cases in~\eqref{eq:root_bb}: $\rho=1$,
$\rho<1$, and  $\rho>1$.
\begin{description}
\item[$\rho=1$:]  in this case, simplification
of \eqref{eq:root_bb} and the definition 
of $\phi$ yield that $\beta=1/3=\phi(1)$.
\item[$\rho<1$:] the unique minimizer
of the quadratic function in \eqref{eq:root_bb} is
$\beta^*:=(4-\rho)/\big(4(1-\rho)\big)$, which must be greater than $1$
because $\rho > 0$. Thus, we have $\beta^*>1>\beta>0$, so
$\beta$ is the smaller root of the quadratic equation in
\eqref{eq:root_bb}.  Using the quadratic formula and rationalizing the denominator,
\begin{align}
 \beta &= 
\frac{4-\rho - \sqrt{(\rho-4)^2 - 4 \cdot 2(1-\rho)(2-\rho)}}{2\cdot 2(1-\rho)}
= \frac{4-\rho - \sqrt{16\rho - 7\rho^2}}{4(1-\rho)} 
\label{minusroot} \\
&= \frac{4-\rho - \sqrt{16\rho - 7\rho^2}}{4(1-\rho)} 
  \cdot \frac{4-\rho + \sqrt{16\rho - 7\rho^2}}{4-\rho + \sqrt{16\rho - 7\rho^2}} 
  \nonumber \\
&= \frac{16 - 24\rho + 8 \rho^2}{4(1-\rho)\big(4-\rho+\sqrt{16\rho-7\rho^2}\big)}
= \frac{8(1-\rho)(2-\rho)}{4(1-\rho)\big(4-\rho+\sqrt{16\rho-7\rho^2}\big)} 
\nonumber \\
&= 
 \dfrac{2(2-\rho)}{4-\rho+\sqrt{16\rho-7\rho^2}} = \phi(\rho).  \label{eq:noite}
\end{align}
\item[$\rho > 1$:] in this case, $\beta^*$ as defined in the previous case is
the unique maximizer of the quadratic function in
\eqref{eq:root_bb} and $\beta^* < 0$. So $\beta^*<0<\beta<1$ and $\beta$ is
the larger root of the quadratic in~\eqref{eq:root_bb}.  Since the coefficient
of the quadratic term is negative in this case, this root also takes the
form~\eqref{minusroot}, and consequently~\eqref{eq:noite} still holds.
\end{description}

The proof of \eqref{eq:noite02} is now complete. Finally, we now prove that
\begin{align}
  \label{eq:noite03}
 \psi(\phi(\rho))=\rho\qquad \forall \rho\in (0,2).
\end{align}
To this end, let $0<\rho<2$ and define
\[
 (0,1)\ni \beta:= \phi(\rho)=\dfrac{2(2-\rho)}{4-\rho+\sqrt{16\rho-7\rho^2}}.
\]
Using the above definition and the quadratic formula, we
conclude that $\beta$ also satisfies the quadratic equation
\eqref{eq:root_bb}, which after some simple algebra gives
\[
\rho=\dfrac{2(\beta-1)^2}{2(\beta-1)^2+3\beta-1},
\]
that is, $\rho=\psi(\beta)$, which in turn
is equivalent to
\eqref{eq:noite03}. 
\end{proof}

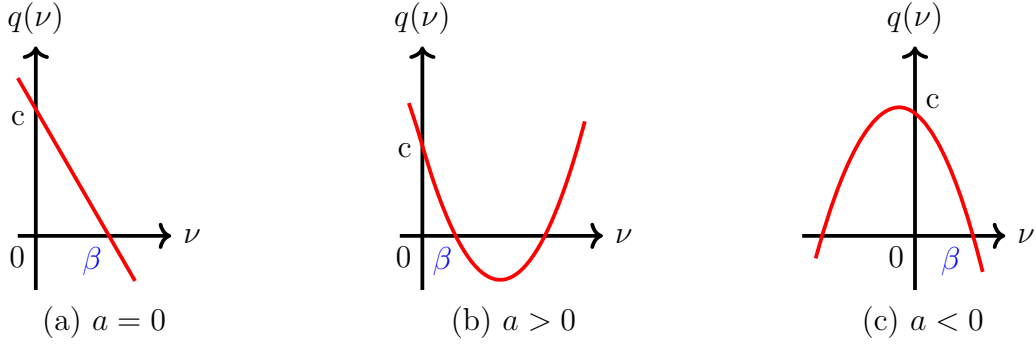
\begin{figure}
\centering
\begin{minipage}{0.3\textwidth}
\centering
\begin{tikzpicture}[domain=-0.5:2, scale=0.6]
    \draw[->,line width = 0.50mm] (-0.4,0) -- (3,0) node[right] {$\nu$};
    \draw[->,line width = 0.50mm] (0,-1.2) -- (0,4.2) node[above] {$q(\nu)$};
    \draw[color=red,line width = 0.50mm] (-0.4,3.5) -- (2.2,-1);
        \node[below left,black] at (0,0) {0};
        \node[below left,black] at (0,3) {c};
        \node[below left,blue] at (1.7,0) {$\beta$};
\end{tikzpicture} \\
(a) $a=0$ 
\end{minipage}
~
\begin{minipage}{0.3\textwidth}
\centering
\begin{tikzpicture}[domain=-0.3:3.6, scale=0.6]
    \draw[->,line width = 0.50mm] (-0.5,0) -- (4,0) node[right] {$\nu$};
    \draw[->,line width = 0.50mm] (0,-1.2) -- (0,4.2) node[above] {$q(\nu)$};
    \draw[red, line width = 0.50mm]   plot[smooth,domain=-0.3:3.6] (\x, {\x^2 -3.45*(\x)+2});
    \node[below left,black] at (0,0) {0};
    \node[below left,black] at (0,2.3) {c};
    \node[below left,blue] at (0.9,0) {$\beta$};
    \end{tikzpicture} \\
(b) $a>0$
\end{minipage}
~
\begin{minipage}{0.3\textwidth}
\centering
  \begin{tikzpicture}[domain=-0.5:4, scale=0.6]
    \draw[->,line width = 0.50mm] (-2.5,0) -- (2,0) node[right] {$\nu$};
    \draw[->,line width = 0.50mm] (0,-1.2) -- (0,4.2) node[above] {$q(\nu)$};
    \draw[color=red,line width = 0.50mm] (-2.2,-0.5) parabola[parabola height= 3.5cm] + (3.7,-0.3);
    \node[below left,black] at (0,0) {0};
    \node[above right,black] at (0,2.6) {c};
    \node[below left,blue] at (1.25,0) {$\beta$};
    \end{tikzpicture} \\
(c) $a<0$
\end{minipage}
  \caption{Possible cases for the quadratic function $q(\cdot)$ 
           in Lemma \ref{lm:quadratic}.}
  \label{fig05}
\end{figure}

\begin{lemma}
 \label{lm:quadratic}
 Let $\R\ni \nu\mapsto q(\nu):=a \nu^2-b\nu+c$ be a real function and assume that $b,c>0$ and $b^2-4ac>0$. Define
\begin{align}
  \label{eq:root_b}
 \beta:= \dfrac{2c}{b+\sqrt{b^2-4ac}}>0.
\end{align}
\begin{itemize}
 \item[\emph{(i)}] If $a=0$, then $q(\cdot)$ is a decreasing affine function
 and $\beta>0$ as in \eqref{eq:root_b} is its unique root \emph{(}see
 \emph{Figure \ref{fig05}(a)}\emph{)}.
\item[\emph{(ii)}] If $a>0$ \emph{(}resp. $a<0$\emph{)}, then $q(\cdot)$ is a
 convex \emph{(}resp. concave\emph{)} quadratic function and $\beta>0$ as in
 \eqref{eq:root_b} is its smallest \emph{(}resp. largest\emph{)} root
 \emph{(}see \emph{Figure \ref{fig05}(b)} and \emph{Figure \ref{fig05}(c)},
 resp.\emph{)}.
\end{itemize}
In both cases $\emph{(i)}$ and $\emph{(ii)}$,  $\beta>0$ as in
\eqref{eq:root_b} is a root of $q(\cdot)$, and $q(\cdot)$ is decreasing in the
interval $[0,\beta]$ \emph{(}see \emph{Figure \ref{fig05}}\emph{)}.
\end{lemma}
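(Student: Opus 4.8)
The plan is to verify directly that $\beta$ from \eqref{eq:root_b} is a root of $q$, to identify \emph{which} root it is according to the sign of $a$, and finally to locate the vertex of $q$ relative to the interval $[0,\beta]$ so as to read off the claimed monotonicity. I would first dispose of the affine case (i): when $a=0$, the hypothesis $b^2-4ac>0$ reduces to $b^2>0$ and $\sqrt{b^2-4ac}=b$ (using $b>0$), so \eqref{eq:root_b} gives $\beta=c/b$; since $q(\nu)=-b\nu+c$ with $b>0$, this is a decreasing affine function whose unique root is exactly $c/b=\beta$, and it is decreasing on all of $\R$, in particular on $[0,\beta]$.

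For case (ii), where $a\neq 0$, the key algebraic step is to rationalize \eqref{eq:root_b}: multiplying numerator and denominator by $b-\sqrt{b^2-4ac}$ and using $b^2-(b^2-4ac)=4ac$ yields
\begin{align*}
\beta=\frac{b-\sqrt{b^2-4ac}}{2a},
\end{align*}
which is precisely the branch of the quadratic formula for $q(\nu)=0$ carrying the minus sign; hence $\beta$ is a root. To identify which root, I would compare the two numerators $b\mp\sqrt{b^2-4ac}$: since $\sqrt{b^2-4ac}>0$, we have $b-\sqrt{b^2-4ac}<b+\sqrt{b^2-4ac}$, so dividing by $2a$ preserves this ordering when $a>0$ (making $\beta$ the smaller root) and reverses it when $a<0$ (making $\beta$ the larger root). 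Positivity of $\beta$ is immediate from \eqref{eq:root_b}, since $c>0$ and $b+\sqrt{b^2-4ac}>0$.

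Finally, I would establish monotonicity on $[0,\beta]$ via the vertex $\nu^*=b/(2a)$, which is the mean of the two roots. When $a>0$, $q$ is convex and decreasing on $(-\infty,\nu^*]$; since $\nu^*$ is the average of $\beta$ and the strictly larger root, $\nu^*>\beta$, so $[0,\beta]\subseteq(-\infty,\nu^*]$ and $q$ decreases there. When $a<0$, $q$ is concave and decreasing on $[\nu^*,+\infty)$; since $b>0$ and $a<0$ force $\nu^*<0$, the whole interval $[0,\beta]$ lies in $[\nu^*,+\infty)$, again yielding decrease. The main obstacle is not conceptual but one of bookkeeping: the sign of $a$ must be tracked carefully through the rationalization, the ordering of the roots, and the placement of the vertex, so that in each case the monotone branch of the parabola is shown to cover the entire interval $[0,\beta]$.
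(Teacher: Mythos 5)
Your proof is correct and follows essentially the same route as the paper's: the key step in both is rationalizing the denominator of \eqref{eq:root_b} to obtain $\beta=\bigl(b-\sqrt{b^2-4ac}\bigr)/(2a)$, identifying $\beta$ via the quadratic formula, and then reading off the root ordering and monotonicity from the sign of $a$. You simply spell out the vertex-location argument that the paper leaves implicit in its final sentence, which is a welcome addition but not a different method.
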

\begin{proof}
 The proof of (i) is straightforward. To prove (ii), note that rationalizing
the denominator of~\eqref{eq:root_b} results in
$\beta=\left(b-\sqrt{b^2-4ac}\right)/2a$, which in turn implies that (ii)
follows from the quadratic formula and the assumption that $b,c>0$. The last
statement of the lemma is a direct consequence of (i), (ii) and the assumption
that $b,c>0$.
\end{proof}

\begin{lemma}[Opial~\cite{opial-wea.bams67}]
 \label{lm:opial}
Let $\emptyset \neq \Omega\subset \R^n$ and $\{z^k\}$ be a sequence in $\R^n$
such that every cluster point of $\{z^k\}$ belongs to $\Omega$ and $\lim_{k\to
\infty}\,\norm{z^k-z^*}$ exists for every $z^*\in \Omega$. Then $\{z^k\}$
converges to a point in $\Omega$.
\end{lemma}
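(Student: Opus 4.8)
The plan is to exploit the finite-dimensionality of $\R^n$ to reduce Opial's lemma to an elementary subsequence argument. The two hypotheses are that every cluster point of $\{z^k\}$ lies in $\Omega$, and that $\lim_{k\to\infty}\norm{z^k-z^*}$ exists for each $z^*\in\Omega$. My strategy is first to produce a single cluster point belonging to $\Omega$, and then to use the distance-limit hypothesis to upgrade subsequential convergence to convergence of the whole sequence.

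First I would establish boundedness of $\{z^k\}$. Since $\Omega\neq\emptyset$, I fix some $z^*\in\Omega$; by hypothesis the limit $\lim_{k\to\infty}\norm{z^k-z^*}$ exists and is finite, so the real sequence $\{\norm{z^k-z^*}\}$ is bounded, and therefore $\{z^k\}$ is bounded in $\R^n$.

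Next I would apply the Bolzano--Weierstrass theorem in $\R^n$ to extract a convergent subsequence $z^{k_j}\to z^\infty$ for some $z^\infty\in\R^n$. By the first hypothesis this cluster point satisfies $z^\infty\in\Omega$, so the second hypothesis applies with $z^*=z^\infty$: the limit $\ell:=\lim_{k\to\infty}\norm{z^k-z^\infty}$ exists. Along the chosen subsequence I have $\norm{z^{k_j}-z^\infty}\to 0$, and since a convergent real sequence and any of its subsequences share the same limit, I conclude $\ell=0$. Hence $\norm{z^k-z^\infty}\to 0$, that is, $z^k\to z^\infty\in\Omega$, which is exactly the desired conclusion.

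There is no serious obstacle in this finite-dimensional setting: the only point requiring care is recognizing that the existence of the distance limit, combined with a single subsequence tending to $z^\infty$, already forces the entire distance sequence to zero --- so no separate uniqueness-of-cluster-point argument is needed. The genuine delicacy of Opial's original statement arises only in infinite-dimensional Hilbert spaces, where weak cluster points and the Opial property must be invoked; none of that machinery is required here, so the proof reduces to the three steps above.
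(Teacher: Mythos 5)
Your proof is correct. Note that the paper itself offers no proof of this lemma --- it is stated in the appendix purely as a citation to Opial's original 1967 paper --- so there is no internal argument to compare against; your self-contained finite-dimensional proof is a genuine addition. The three steps are all sound: boundedness of $\{z^k\}$ follows from the existence (hence finiteness) of $\lim_{k\to\infty}\norm{z^k-z^*}$ for one fixed $z^*\in\Omega$; Bolzano--Weierstrass then yields a cluster point $z^\infty$, which lies in $\Omega$ by hypothesis; and the decisive step --- that the full sequence $\{\norm{z^k-z^\infty}\}$ converges while one of its subsequences tends to $0$, forcing the limit to be $0$ --- is exactly the right way to upgrade subsequential convergence to convergence of the whole sequence without any separate uniqueness-of-cluster-point argument. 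Your closing remark is also accurate: the delicate content of Opial's lemma (the Opial property of the norm, weak cluster points) only matters in infinite-dimensional Hilbert space, and since the paper works entirely in $\R^n$, the elementary argument you give fully suffices for every use of Lemma~\ref{lm:opial} in the paper (namely, in the proof of Theorem~\ref{th:main}).
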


\noindent The following lemma was essentially proved by Alvarez and Attouch in
\cite[Theorem 2.1]{alv.att-iner.svva01}.

\begin{lemma}
 \label{lm:alv.att}
Let the sequences $\{\varphi_k\}$, $\{s_k\}$, $\{\alpha_k\}$ and $\{\delta_k\}$ in $[0,+\infty[$
and $\alpha\in \R$ be such that
$\varphi_0=\varphi_{-1}$, $0\leq \alpha_{k}\leq \alpha<1$ and
\begin{align}
  \label{eq:alv.att02}
\varphi_{k+1}-\varphi_{k}+s_{k+1}\leq \alpha_{k}(\varphi_{k}-\varphi_{k-1})+\delta_{k}\qquad \forall k\geq 0.
\end{align}
The following hold:
\begin{enumerate}
  \item [\emph{(a)}] For all $k\geq 1$,
 \begin{align}
       \label{eq:alv.att01}
       \varphi_k+\sum_{j=1}^k\,s_j\leq
      \varphi_0+\dfrac{1}{1-\alpha} \sum_{j=0}^{k-1}\,\delta_j.
    \end{align}
  \item [\emph{(b)}] If $\sum^{\infty}_{k=0}\delta_k <+\infty$, then
  $\lim_{k\to \infty}\,\varphi_{k}$ exists, \emph{i.e.}, the
  sequence $\{\varphi_k\}$ converges to some element in $[0,\infty)$.
\end{enumerate}
\end{lemma}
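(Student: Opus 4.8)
The plan is to follow the classical Alvarez--Attouch argument, whose engine is the auxiliary nonnegative sequence of ``positive jumps'' $\theta_k := \max\{\varphi_k - \varphi_{k-1},\,0\}$ for $k\geq 0$; note that the initialization $\varphi_0=\varphi_{-1}$ forces $\theta_0=0$. First I would discard the nonnegative term $s_{k+1}$ from the left-hand side of \eqref{eq:alv.att02} and use $\alpha_k\leq\alpha$ together with $\varphi_k-\varphi_{k-1}\leq\theta_k$ to obtain $\varphi_{k+1}-\varphi_k\leq\alpha\,\theta_k+\delta_k$. Since the right-hand side is nonnegative, taking the positive part of the left-hand side yields the key first-order recursion
\begin{align*}
\theta_{k+1}\leq\alpha\,\theta_k+\delta_k\qquad\forall k\geq 0.
\end{align*}

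For part (a) I would sum this recursion over $k=0,\dots,K-1$; after the index shift the left side becomes $\sum_{k=1}^{K}\theta_k$, and using $\theta_0=0$ together with the nonnegativity of $\theta_K$ one obtains $(1-\alpha)\sum_{k=0}^{K-1}\theta_k\leq\sum_{k=0}^{K-1}\delta_k$, hence $\sum_{k=0}^{K-1}\theta_k\leq(1-\alpha)^{-1}\sum_{k=0}^{K-1}\delta_k$. Separately, summing the inequality $\varphi_{k+1}-\varphi_k+s_{k+1}\leq\alpha\,\theta_k+\delta_k$ over the same range telescopes the $\varphi$-terms into $\varphi_K-\varphi_0+\sum_{j=1}^K s_j$ on the left. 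Substituting the bound on $\sum_{k=0}^{K-1}\theta_k$ and collapsing the coefficient $\alpha/(1-\alpha)+1=1/(1-\alpha)$ then produces exactly \eqref{eq:alv.att01}.

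For part (b) I would first note that, under the hypothesis $\sum_k\delta_k<+\infty$, the bound just derived gives $\sum_{k=0}^\infty\theta_k\leq(1-\alpha)^{-1}\sum_{k=0}^\infty\delta_k<+\infty$. The plan is then to introduce $w_k:=\varphi_k-\sum_{j=1}^k\theta_j$ and observe that $w_{k+1}-w_k=(\varphi_{k+1}-\varphi_k)-\theta_{k+1}\leq 0$ by the very definition of $\theta_{k+1}$, so $\{w_k\}$ is nonincreasing; moreover $w_k\geq\varphi_k-\sum_{j=1}^\infty\theta_j\geq-\sum_{j=1}^\infty\theta_j>-\infty$ since $\varphi_k\geq 0$, so $\{w_k\}$ is bounded below and hence convergent. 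Because the partial sums $\sum_{j=1}^k\theta_j$ also converge, $\varphi_k=w_k+\sum_{j=1}^k\theta_j$ converges, and its limit lies in $[0,\infty)$ by nonnegativity of the $\varphi_k$.

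The routine parts are the two summations and the index bookkeeping; the only genuinely delicate step---and the heart of the lemma---is the passage from the second-order difference inequality \eqref{eq:alv.att02} in $\varphi$ to the first-order recursion for $\theta_k$, which is precisely what makes the positive-part device the right tool. Everything downstream is a telescoping sum followed by the monotone-convergence argument for $\{w_k\}$.
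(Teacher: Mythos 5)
Your proof is correct and follows essentially the same route as the paper: the positive-part device $\theta_k=[\varphi_k-\varphi_{k-1}]_+$, the resulting bound $\sum_k\theta_k\le(1-\alpha)^{-1}\sum_k\delta_k$, and the telescoping summation are precisely the classical Alvarez--Attouch argument on which the paper's proof rests. The only difference is one of packaging: the paper cites \cite[Theorem 2.1]{alv.att-iner.svva01} both for the bound on $\sum_j[\varphi_j-\varphi_{j-1}]_+$ and for part (b), whereas you rederive these steps from scratch (including the nonincreasing auxiliary sequence $w_k=\varphi_k-\sum_{j=1}^{k}\theta_j$ for part (b)), making your proof self-contained.
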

\begin{proof}
 It was proved in \cite[Theorem 2.1]{alv.att-iner.svva01} that
 $\mathcal{M}:=(1-\alpha)^{-1}\sum_{j=0}^{k}\delta_j \geq
\sum_{j=1}^{k+1}\,[\varphi_j-\varphi_{j-1}]_+$, where $[\cdot]_{+}=\max \{\cdot,0\}.$
Using this, the  assumptions $\varphi_0=\varphi_{-1}$, $0\leq \alpha_{k}\leq \alpha<1$ and \eqref{eq:alv.att02}, and some algebraic manipulations we find, for all $k\geq 0$,
\begin{align*}
 \varphi_{k+1}+\sum_{j=1}^{k+1}\,s_j &\leq \varphi_0+ \alpha\sum_{j=1}^{k+1}[\varphi_j-\varphi_{j-1}]_{+}+\sum_{j=0}^{k}\,\delta_j\\
      &\leq \varphi_0+ \alpha \mathcal{M}+(1-\alpha)\mathcal{M}=\varphi_0+\mathcal{M},
    \end{align*}
which proves (a). To finish the proof, we note that (b)
was established within the proof of \cite[Theorem
2.1]{alv.att-iner.svva01}.
\end{proof}


%
\def\cprime{$'$}

\end{document}